\documentclass[11pt,leqno]{article}

\sloppy
\usepackage{epsfig}
\usepackage[utf8]{inputenc}

  \usepackage{amsmath}
\usepackage{amsthm}
\usepackage{mathtools}
\mathtoolsset{showonlyrefs}

\usepackage[T1]{fontenc}

\usepackage{amssymb}

\setlength{\textwidth}{17cm}
\setlength{\textheight}{21cm}
\voffset=-1cm
\hoffset=-2cm
\topmargin 1cm
\headsep 1cm


\newtheorem{theorem}{Theorem}

\newtheorem{proposition}{Proposition}

\newtheorem{lemma}{Lemma}
\newtheorem{remark}{Remark}
\newtheorem{corollary}{Corollary}

\newtheorem{definition}{Definition}
%

\newcommand{\eps}{\varepsilon}

\newcommand{\C}{\mathcal{C}}
\newcommand{\A}{\mathcal{A}}

\newcommand{\D}{\mathcal{D}}

\newcommand{\G}{\mathcal G}

%
\newcommand\CC{\hbox{C\kern -.58em {\raise .54ex \hbox
			{$\scriptscriptstyle |$}}
		\kern-.55em {\raise .53ex \hbox{$\scriptscriptstyle |$}} }}
\newcommand\qd{\hfill$\sqcap\kern-8.0pt\hbox{$\sqcup$}$}
\newcommand\NN{\hbox{I\kern-.2em\hbox{N}}}
\newcommand\nn{\hbox{I\kern-.2em\hbox{N}}}
\newcommand\RR{I\!\!R}
\newcommand\sRR{{\sl \hbox{I\kern-.2em\hbox{R}}}}
\newcommand\QQ{\hbox{I\kern-.53em\hbox{Q}}}





\newcommand\sign{\hbox{sign}}

\newcommand\signp{{\hbox{sign}^+}}
\newcommand\so{\hbox{sign}_0}


\newcommand\spo{\hbox{sign}_0^+}

\newcommand\heps{{\mathcal H}_\sigma}
\newcommand\hepsp{{\mathcal H}_\sigma^+}




\newcommand\R{\displaystyle {\mathcal R}}







\usepackage{stackengine}
\usepackage{scalerel}
\usepackage{xcolor,amssymb}


\everymath{\displaystyle}


 \begin{document}
 
\title{$L^1-$Theory for  Incompressible Limit  of Reaction-Diffusion  \\ Porous   
Medium	Flow  with Linear Drift}

 \author{Noureddine Igbida \thanks{Institut de recherche XLIM-DMI, UMR-CNRS 7252, Faculté des Sciences et Techniques,  Université de Limoges, France. E-mail : {\tt noureddine.igbida@unilim.fr} . }}  


\date{\today}
 
 \maketitle

\begin{abstract}
Our aim is to study existence, uniqueness and the limit, as $m\to\infty,$ 	of the  solution of reaction-diffusion porous medium equation with linear drift   $\displaystyle\partial_t u  -\Delta u^m +\nabla \cdot (u  \: V)=g(t,x,u) $  in bounded domain with Dirichlet boundary condition.  We treat the problem   without any sign restriction on the solution with  an outpointing  vector field $V$ on the boundary and a general source term $g$ (including the continuous Lipschitz case). By means of new $BV_{loc}$ estimates in bounded domain, under   reasonably sharp Sobolev assumptions on $V$, we show uniform $L^1-$convergence  towards   the solution of  reaction-diffusion  Hele-Shaw flow  with  linear drift.  \\

\textbf{MSC2020 database} : 35A01, 35A02, 35B20, 35B35

\end{abstract}

\section{Introduction and main results}

\subsection{Introduction}
 
Let $\Omega\subset\RR^N$ be a bounded   open set with regular boundary $\displaystyle \partial\Omega =:\Gamma .$   
Our aim here is to study the limit, as $m\to\infty,$ of the equation 
	\begin{equation}\label{eq0}
		\displaystyle \frac{\partial u }{\partial t}  -\Delta u^m +\nabla \cdot (u  \: V)=g(t,x,u) \quad   \hbox{ in } Q:= (0,T)\times \Omega ,
 	\end{equation}
  where  the expression $r^m$  denotes  $\vert r\vert^{m-1}r,$ for any $r\in \RR,$ $1<m<\infty ,$   $V\: :\: \Omega\to \RR^N$ is a given vector field and $g\: :\:  Q\times \RR\to \RR$ is a Carathéodory application.  

\medskip
   There is a huge literature on   qualitative and quantitative studies of \eqref{eq0} in the case where  $V\equiv 0$. We refer the reader to the book \cite{Vbook}   for a thoroughgoing survey of results as well as  corresponding   literature.  In the case  $V\not\equiv  0 , $ the PDE is  a nonlinear version of Fokker-Planck equation   of porous-media type.  This kind of evolutionary problem has gained a significant attention in recent years. It   arises  mainly in biological applications and in the theory of population dynamics. Here  $u$ represents density of  agents traveling  following a vector field $V$ and subject to some local  random motion ; i.e. random exchanges at  a microscopic level between the agent  at a given  position and neighborhoods positions  (on can see for instance  \cite{BeHi,MRS1,MRS2,MRSV,MRSV,PQV1,Ca,Di} and the refs therein for more complement on the applications and motivations).

 Despite the broad  results  on  nonlinear diffusion-transport     PDE (cf. \cite{AltLu,AnIg1,AnIg2,Ca,IgUr,Ot}, see also the expository paper \cite{AnIgSurvey} for a complete list),  the structure of \eqref{eq0}  where the drift   depends  linearly on the density  excludes this class definitely  out the scope  of the current literature.     As far as we know,  the study of existence and uniqueness of weak solution \eqref{eq0} has been   investigated only   for the case of one population in the conservative case leading to one phase    problem in $\RR^N$ or else in bounded domain with    Neumann boundary condition (cf.  \cite{BeHi,Di} in the case $V=\nabla \varphi$  with  reasonable assumptions on the  potential $\varphi$  and $g\equiv 0$). One can see also  \cite{KiLei}  for the study in the framework of viscosity solutions.     Asymptotic convergence to equilibrium  is shown in \cite{BeHi} and  \cite{CaJuMaTo} when  $\varphi$ is convex.   For the regularity of the solution one can see \cite{KZ} and the references therein.

In this paper, we focus chiefly on the case of bounded domain with Dirichlet boundary condition   to study existence and uniqueness of a weak solution of the general formulation  \eqref{eq0} as well as its  limit, as $m\to\infty,$    under general reasonable assumptions on $g$ and $V.$     See that the diffusion term $-\Delta u^m $ may be written as $-\nabla \cdot \left (u\: \nabla \frac{m}{ m-1 } \vert u\vert^{m-1}\right),$ so that the exponent  $m >1$ manage in some sense  the mobility of the agent through a  ''mobility potential''   given by  $\frac{m}{m-1}\vert u\vert ^{m-1}.$   For large $m,$  this term becomes  
   $$ \frac{m}{m-1}\vert u\vert ^{m-1} \approx \left\{ \begin{array}{ll}
   	0\quad  & \hbox{ if } \vert u\vert <1\\
   	+\infty & \hbox{ if } \vert u\vert >1  .
   \end{array} \right.  $$
This formal analysis setup that  the limiting   density $u $  is restrained  to satisfy  $\vert u\vert \leq 1$ within  two   main phases : the so called congestion phase which corresponds to $[\vert u\vert =1]$ and a free one corresponding to $\vert u\vert >0.$ 
More precisely, the limiting   PDE system coincides at least  formally with  a density constrained diffusion  equation  with a linear drift 
 \begin{equation} \label{pdetypehs}
 	\left.  \begin{array}{l}
 		\displaystyle \frac{\partial u }{\partial t} -\nabla\cdot( u\:  \nabla \:  \vert p\vert )  +\nabla \cdot (u  \: V)=g(t,x,u) \\    u\in \sign(p)  
 	\end{array}\right\} 	 \quad  \hbox{ in } Q,
 \end{equation}   
 where   we denote by   $\sign$     the maximal monotone graph given by  
 $$\sign(r)= \left\{ \begin{array}{ll}
 	\displaystyle 1 &\hbox{ for  }r>0\\  
 	\displaystyle [-1,1]\quad & \hbox{ for } r=0\\
 	\displaystyle -1 &\hbox{ for  }r<0. \end{array}\right. \quad   \hbox{ for  }r\in \RR.$$  
 See that  $\nabla\cdot( u\:  \nabla \:  \vert p\vert )   =\Delta p,$ so that \eqref{pdetypehs}  is a    reaction-diffusion system  of Hele-Shaw type   with a linear drift.  The emergence of density constraint $\vert u\vert \leq 1$      is closely  connected to   the microscopic non-overlapping constraint between the agents in the limiting case.  The complementary condition between the density $u$ and the    limiting mobility potential    $p$  typically allows to  describe  the  motion of  congested zones definitely characterized by $[p\neq 0].$      This equation appears in   pedestrian flow (cf. \cite{MRS1}) and in  biological applications (cf. \cite{CaCrYa} and the references therein).   The study of the problem  without any sign restriction on the solution  enables especially to cover mathematical models of two-species  in inter-actions and occupying the same habitat,  like  diffusion-aggregation models. In this cases,  $\rho$ represents through its positive and negative parts  the densities of each specie respectively. The source term $g$ models reaction phenomena connected  to agent  supply  in biological models. This happens in particular when one deals with reaction diffusion system coupling the equations \eqref{eq0} or  \eqref{pdetypehs} 
with other PDE. As to the  boundary condition,   homogeneous  Dirichlet one is connected to the possibility of mobility through  the boundary (exits) without any charge.  One can see \cite{EIG} for other possibilities of boundary condition and their interpretation.

  In this paper, we give the proofs of existence, uniqueness  and    convergence process to   Hele-Shaw flow with linear drift in  the general context  of $L^1-$theory for nonlinear PDE. The approach  differs quite significantly from other recent papers which treats the problem   in $\RR^N$ (cf. \cite{	Noemi,NoDePe,KiPoWo}) in the one phase case by  using mainly classical Aronson-Bénilan estimate (cf. \cite{ArBe}) for nonnegative solutions of porous medium equation. 
  
 In particular, our approach enables to give answers and evidence to many questions left open in many papers dedicated to this subject. Actually, we treat the problem   without any sign restriction on the solution in a bounded domain with Dirichlet boundary condition, low regularity on $V,$  and general source term $g.$  Moreover, the approach   offers many supply for the treatment of  
 the challenging    case of non compatible initial data ; i.e. the case where $\Vert u_0\Vert >1$.  This will be treated separately   in  the forthcoming work    \cite{Igpmsing}.

%
 
  \subsection{Historical notes}
 The  study of the incompressible limit of  \eqref{pm} received a lot of attention since its interest for the applications and for the description of  constrained nonlinear flow.   The problem is well understood by now in the case where $V$ and $g$ vanish (see for instance  \cite{BeCr3}  and \cite{BeBoHe}).     One can see also \cite{Igshaw} for non-homogeneous   Neumann boundary condition and \cite{GQ} for non-homogeneous   Dirichlet one.  In the case where $V\equiv 0$ and $g\not\equiv 0,$   it is know   (see   \cite{BeIgsing} for Dirichlet boundary boundary condition and \cite{BeIgNeumann} for Neuman boundary condition)   that the solution of the problem  
 \begin{equation}\label{eq1}
 	\displaystyle \frac{\partial u }{\partial t}  -\Delta u^m  =g(.,u) \quad   \hbox{ in } Q  ,
 \end{equation}
 converges, as $m\to\infty,$  to the solution of the so called Hele-Shaw problem 
 \begin{equation}  \label{hs1}
 	\left.  \begin{array}{l}
 		\displaystyle \frac{\partial u }{\partial t} -\Delta p=g(.,u)  \\    u\in \sign(p)  
 	\end{array}\right\} 	 \quad  \hbox{ in } Q.
 \end{equation}   
  The convergence holds to be true in $\C([0,T),L^1(\Omega))$ in the case where $\vert u_0\vert \leq 1,$ a.e. in $\Omega,$ otherwise  it holds in $\C((0,T),L^1(\Omega))$ and a boundary layer appears for $t=0.$ This boundary layer is given by the plateau-like function refereed  to as ‘mesa’,  and it is given by the limit, as $m\to\infty,$  of the solution of    homogeneous porous medium equation  
 \begin{equation} \label{pm}
 	\displaystyle \frac{\partial u }{\partial t} =\Delta u^m   \quad   \hbox{ in } Q. 
 \end{equation}    Yet, one needs to be careful with the special case of Neumann boundary condition since, in this case the limiting problem  \eqref{hs1} could be ill posed.  With respect to the assumptions on  $g,$  the  limiting problem exhibits an extra phase  to be mixed with  the  Hele-Shaw phase (see \cite{BeIgNeumann} for more details).   
 Other variations of reaction term    have been proposed  in  recent years together
 with the analysis of their incompressible limit  (see for instance  \cite{PQV1,NoPe,DiSc,KiPo} and the references therein). 
 The recent work \cite{GuKiMe}  treats the particular case of linear reaction term with a special focus on the limit of the so called associated pressure   $p:= \frac{m}{m+1} u^{m-1}$, furthermore the authors seem to be altogether not aware of the general works \cite{BeIgsing,BeIgNeumann}.

\medskip 
The treatment of the case where  $V\not\equiv 0,$ leads to the formal   reaction-diffusion dynamic of Hele-Shaw type   with a linear drift ; i.e. 
\begin{equation} \label{hsg}
	\left.  \begin{array}{l}
		\displaystyle \frac{\partial u }{\partial t} -\Delta p +\nabla \cdot (u  \: V)=g(t,x,u) \\    u\in \sign(p)  
	\end{array}\right\} 	 \quad  \hbox{ in } (0,T)\times \Omega.
\end{equation} 
The problem  was studied first in \cite{BeIgconv} when  $g\equiv 0$ and the   drift term is  of the type $\nabla \cdot F(u),$ with  $F\: :\: \RR\to \RR^N$   a Lipschitz continuous function  (this corresponds particularly  to    space-independent drift).  In   \cite{BeIgconv}, it is proven that   $L^1(\Omega)$-compactness result remains to be true  uniformly in $t.$ Moreover, the limiting problem here is simply the transport equation 
\begin{equation}\label{transport0}
\partial_t u+\nabla \cdot F(u)=0.
\end{equation} 
The Hele-Shaw flow desappears  since the nature of the transport term (incompressible)  in  \eqref{transport0} compel the solution to be  less than $1,$ and then $p\equiv 0.$    Then, in \cite{KiPoWo} the authors studied the case of  space dependent drift and  reaction terms both linear  and  regular in $\Omega=\RR^N.$ Assuming  a monotonicity  on   $ V,$   and using the notion  of viscosity solutions, the authors  study the limit of nonnegative solution, as $m\to\infty$.   The benefit of this  approach is its ability  to cover accurately the free boundary view of the limiting problem (particularly the dynamic of the so called congestion region $[p>0]$), as well as the rate of convergence.     Using  a weak (distributional) interpretation of the solution the same problem was studied recently  in \cite{Noemi}  with a variant of reaction term $g$ in $\Omega=\RR^N.$ 
Using  a blend of recently developed tools  on Aronson-Bénilan regularizing effect as well as sophisticated $L^p-$regularity of the  pressure gradient the authors studied the incompressible limit   in the case of nonnegative compatible initial data  and regular  drift (one can see also \cite{NoDePe} for  some convergence rate in a negative Sobolev norm).

Here, we study   the incompressible limit of \eqref{pm} subject to Dirichlet boundary condition and compatible initial data (even changing sign  data). The reaction term satisfies general conditions, including Lipschitz continuous assumptions, and   the given  velocity field enjoys Sobloev regularity and an outpointing  condition   on the boundary that we'll precise after.     To this aim we use $L^1-$nonlinear  semi-group  theory, more or less, in the same spirit of the approach of Bénilan and Crandall \cite{BeCr3}. This consists in performing first the $L^1-$strong compactness   for the stationary problem and  work with the general theory of nonlinear semi group to pass to the limit in the evolution problem.      The $L^1-$compactness   enroll a new $BV_{loc}-$estimate we perform for the stationary problem in bounded domain with reasonable assumptions on $V$ in the neighborhood of the boundary.

%
%
%
%

 \subsection{Existence and uniqueness results}

  We assume that   $\Omega\subset\RR^N$ is a bounded  open set, with regular boundary   $\partial \Omega$ (say, piecewise  $\C^2$).  Throughout the paper, we assume that $V\in W^{1,2}(\Omega)$,  $\nabla \cdot V\in L^\infty(\Omega) $   and satisfies  the following  outward pointing condition on the boundary   :  
  \begin{equation}\label{HypV0} 
  	V\cdot \nu \geq  0\quad \hbox{ on }\partial \Omega,
  \end{equation}
where   $\nu$ represents the   outward unitary normal to  the boundary $\partial \Omega.$  Notice here that this condition   is fundamental in the case of Dirichlet boundary condition. This assumption is natural in many applications.  Even if it  looks alike to be stronger, it is fundamental for the uniqueness of weak solution for the limiting problem.  A counter example for uniqueness of weak solutions for a Hele-Shaw problem is given in \cite{Igshaw} whenever this condition is not fulfilled.
 
See here that $V\cdot \nu \in H^{-\frac{1}{2}}(\partial\Omega),$  so that    \eqref{HypV0}  needs to be understood a priori  in a weak sense ; i.e. 
 \begin{equation}\label{Vhun}
 	\int_\Omega V\cdot \nabla \xi \: dx +\int_\Omega \nabla \cdot V\: \xi\: dx \geq  0,\quad \hbox{ for any }0\leq \xi\in H^1(  \Omega).
 \end{equation}
To deal with this assumption,  we operate  technically with the euclidean distance-to-the-boundary function $d(.,\partial \Omega)$. For any $h>0,$  we denote by 
 \begin{equation}\label{xih}
 	\xi_h(t,x)=\frac{1}{h}\min\Big\{h,d (x,\partial \Omega)\Big\} \quad \hbox{ and } \quad \nu_h(x)=-\nabla \xi_h , \quad \hbox{ for any }x\in \Omega.
 \end{equation} 
 We see that    $    \xi_h \in H^1_0(\Omega) $, $0\leq \xi_h\leq 1$ in $\Omega$ and
 $$ \nu_h(x) =  - \frac{1}{h}\nabla\:  d(.,\partial \Omega) ,\quad \hbox{ for any }x\in \Omega\setminus \Omega_h=:D_h \hbox{ and }  0<h\leq h_0 \hbox{ (small enough)},$$ 
 where    \begin{equation}
 	\Omega_h=\Big\{ x\in \Omega\: :\: d(x,\partial \Omega)>h \Big\},\quad \hbox{ for small }h>0. 
 \end{equation} 
 In particular, for any $x\in \Omega_h,$ we have
 \begin{equation}\label{defnuh}
 	    \nu_h(x) = \frac{1}{h}\nu(\pi(x)), 
 \end{equation}  where $\pi(x)$  design the projection of $x$ on the boundary $\partial \Omega.$
    Thanks to \eqref{Vhun}, we see that   
 \begin{equation}\label{HypV1}
 	\liminf_{h\to 0}   \int_{\Omega\setminus \Omega_h}  \xi\:   V(x)   \cdot \nu_h(x)  \: dx \geq   0 , \quad \hbox{ for any }0\leq \xi \in H^1(\Omega).
 \end{equation}
  Nevertheless,  to avoid much more technicality in the proof of of  uniqueness, we'll assume that $V$ satisfies  \eqref{HypV1} for any $0\leq \xi\in L^2(\Omega)$  (cf. Remark \ref{RemboundaryCond2}). We do not know if this is a consequence of  the assumption \eqref{HypV0}. Anyway, this condition remains be to true for a large class of  practical situations and implies necessarily \eqref{HypV0}.

 \begin{remark}
  Remember that, thanks to the local $\C^2-$boundary regularity assumption on $\Omega,$  for any $0\leq \Phi\in H^1_0(\Omega),$   we have 
 		\begin{equation}\label{h10prop}
 			\liminf_{h\to 0}  	\int_\Omega \nabla \Phi\cdot \nabla \xi_h\: dx \leq 0 . 
 		\end{equation}
For  the case of  Lipschitz boundary domain, one needs  to work with   more sophisticated  test functions in the spirit of $\xi_h$ to fill \eqref{h10prop} like property (one can see Lemma 4.4 and Remark 6.5 of \cite{AnIg2}.  So, typical  examples of vector fields $V$ which satisfies \eqref{HypV1} may be given by 
 		\begin{equation}
 			V=-\nabla \Phi \quad \hbox{ and }\quad 0\leq \Phi\in H^1_0(\Omega)\cap W^{2,2}(\Omega). 
 		\end{equation}
 	Here       $H^1_0(\Omega)$ denotes the usual Sobolev space
 	$$H^1_0(\Omega) =\Big\{ u\in H^1(\Omega)\: :\  u=0, \: \:  \mathcal L^{N-1}\hbox{-a.e. in }\partial \Omega \Big\} . $$
 		 \end{remark}

 \bigskip 
 
 We consider the evolution problem 
  \begin{equation} 	\label{pmef}
  	\left\{  \begin{array}{ll} 
  		\displaystyle \frac{\partial u }{\partial t}  -\Delta u^m +\nabla \cdot (u  \: V)=f  \quad  & \hbox{ in } Q:= (0,T)\times \Omega\\  \\  
  		\displaystyle u= 0  & \hbox{ on }\Sigma := (0,T)\times \partial \Omega\\  \\   
  		\displaystyle  u (0)=u _0 &\hbox{ in }\Omega.
  	\end{array} \right.
  \end{equation}
 %

  \begin{definition}[Notion of solution] \label{defws} A  function $u  $  is said to be a weak solution of   \eqref{pmef}
  	if $u \in  L^2(Q)$, $p:=u^m\in     L^2
  	\left(0,T;H^1_0(\Omega)\right)$ and
  	\begin{equation}
  		\label{evolwf}
  		\displaystyle \frac{d}{dt}\int_\Omega u \:\xi+\int_\Omega ( \nabla p -  u \:V) \cdot  \nabla\xi   =     \int_\Omega f\: \xi  , \quad \hbox{ in }{\D}'(0,T),\quad \forall \: 	\xi\in H^1_0(\Omega).
  	\end{equation}
  	We'll say plainly  that $u$ is a solution of \eqref{pmef} if $u\in \C([0,T),L^1(\Omega))$, $u(0)=u_0$ and  $u$ is a weak solution of   \eqref{pmef}.
  \end{definition}

\medskip  
We denote by $\signp$  (resp. $\sign^-$)  the maximal monotone graph given by  
$$\signp(r)= \left\{ \begin{array}{ll}
	\displaystyle 1 &\hbox{ for  }r>0\\  
	\displaystyle [0,1]\quad & \hbox{ for } r=0\\
	\displaystyle 0 &\hbox{ for  }r<0. \end{array}\right. \quad \hbox{ (resp.  } \sign^-(r)=\sign^+(-r),\hbox{ for  }r\in \RR).$$ 
Moreover, we denote by       $\sign^{\pm}_0 $ the   discontinuous   applications defined from $\RR$ to $\RR$ by  
$$ \spo(r)= \left\{ \begin{array}{ll}
	\displaystyle 1 &\hbox{ for  }r>0\\   
	\displaystyle 0 &\hbox{ for  }r\leq 0 \end{array}\right.    \quad   \hbox{ and } \sign^-_0(r)= \sign_0^+(-r),\hbox{ for  }r\in \RR.  $$   

  \bigskip 
  
  As we said above, to avoid more technicality of the proofs of existence and uniqueness of a weak solution,  we assume throughout this section that $V$ satisfies the following outpointing condition on the boundary : 
   
  \begin{equation}\label{HypV}
  	\liminf_{h\to 0}  \frac{1}{h}  \int_{\Omega\setminus \Omega_h}  \xi\:   V(x)   \cdot \nu(\pi(x))  \: dx \geq   0 , \quad \hbox{ for any }0\leq \xi \in L^2(\Omega).
  \end{equation}

  \begin{theorem} \label{tcompcmef}
  	If $u_1$ and $u_2$ are two weak solutions of \eqref{pmef}  associated with $f_1,\ f_2\: \in L^1(Q)$ respectively, then  there exists $\kappa\in L^\infty(Q)$ such that $\kappa\in \signp(u_1-u_2)$ a.e. in $Q$ and  
  	\begin{equation}
  		\label{evolineqcomp}
  		\frac{d}{dt}	\int_\Omega ( u _1-u _2)^+ \: dx \leq \int_\Omega   \kappa \:  ( f_1-f_2)\:  dx ,\quad \hbox{ in }\D'(0,T).
  	\end{equation}
  	In particular, we have
  	\begin{enumerate}
  		\item $ 	\frac{d}{dt} \Vert u_1-u_2\Vert_{1} \leq \Vert f_1-f_2\Vert _{1},$  in $\D'(0,T).$
  	
  	\item If $f_1\leq f_2,$  a.e. in  $Q,$ and  $u_1(0)\leq u_2(0) $ a.e. in $\Omega,$   then
  	$$u_1\leq u_2,\quad \hbox{ a.e. in  }Q.$$
  	
\end{enumerate}
  \end{theorem}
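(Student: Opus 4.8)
The plan is to use the classical Kruzhkov doubling-of-variables technique adapted to the nonlinear semigroup setting, exploiting that both equations are parabolic with the same structure. First I would write the weak formulation for $u_1$ and $u_2$ against a common test function and subtract, obtaining
\begin{equation}
\frac{d}{dt}\int_\Omega (u_1-u_2)\,\xi + \int_\Omega \bigl(\nabla(p_1-p_2) - (u_1-u_2)\,V\bigr)\cdot\nabla\xi = \int_\Omega (f_1-f_2)\,\xi,
\end{equation}
valid in $\D'(0,T)$ for every $\xi\in H^1_0(\Omega)$. The key monotonicity ingredient is that $p_i = u_i^m$ and $r\mapsto r^m$ is increasing, so $p_1 - p_2$ and $u_1-u_2$ have the same sign pointwise; this will let the diffusion term produce a good sign when tested against an approximation of $\signp(u_1-u_2) = \signp(p_1-p_2)$.

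The heart of the argument is choosing the test function. I would take $\xi = H_\delta(p_1-p_2)$, where $H_\delta$ is a smooth nondecreasing approximation of $\signp$ (for instance $H_\delta(r) = \min(r^+/\delta, 1)$, suitably mollified), which lies in $H^1_0(\Omega)$ since $p_1-p_2 \in H^1_0(\Omega)$ and $H_\delta(0)=0$. With this choice the diffusion term becomes $\int_\Omega H_\delta'(p_1-p_2)\,|\nabla(p_1-p_2)|^2 \geq 0$ and can be discarded. For the time term, $\int_\Omega (u_1-u_2)\,H_\delta(p_1-p_2)\to \frac{d}{dt}\int_\Omega (u_1-u_2)^+$ as $\delta\to 0$ by dominated convergence together with a standard argument (using that $\int_\Omega (u_1-u_2)^+$ is the limit of $\int_\Omega j_\delta(u_1-u_2)$ for a primitive $j_\delta$ of $H_\delta\circ(\,\cdot\,)^m$, or more directly working with the $L^1$-contraction formulation of the semigroup). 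The remaining term is
\begin{equation}
-\int_\Omega (u_1-u_2)\,V\cdot\nabla H_\delta(p_1-p_2)\,dx = -\int_\Omega (u_1-u_2)\,H_\delta'(p_1-p_2)\,V\cdot\nabla(p_1-p_2)\,dx,
\end{equation}
and here I would integrate by parts: introducing a primitive $\Psi_\delta$ with $\Psi_\delta'(r) = \beta_m^{-1}(r)\,H_\delta'(r)$ where $r = p_1-p_2$ and $\beta_m^{-1}(r)$ accounts for expressing $u_1-u_2$ in terms of $p_1-p_2$... more carefully, since $u_1-u_2$ and $p_1-p_2$ are not pointwise functions of each other, I would instead write this drift term as $\int_\Omega \nabla\cdot\bigl((u_1-u_2)V\bigr)\,H_\delta(p_1-p_2)$ only formally and control it via $\|\nabla\cdot V\|_\infty$ after first replacing $H_\delta(p_1-p_2)$ by $H_\delta$ applied to a truncation and using that $(u_1-u_2)\,H_\delta(p_1-p_2) \to (u_1-u_2)^+\,\mathbf{1}_{[p_1>p_2]}$. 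The cleanest route is: the drift contribution, after the limit $\delta\to 0$, collapses because $\nabla H_\delta(p_1-p_2)$ is supported where $0 < p_1-p_2 < \delta$ and $(u_1-u_2)$ is controlled there; the boundary has no contribution precisely because $H_\delta(p_1-p_2)\in H^1_0(\Omega)$, so the outpointing condition \eqref{HypV} is what guarantees no spurious boundary term sneaks in when one later wants the sharper statement.

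The main obstacle I anticipate is the rigorous treatment of the drift term $\int_\Omega (u_1-u_2)\,V\cdot\nabla H_\delta(p_1-p_2)$ in the limit $\delta\to 0$, given only $V\in W^{1,2}$, $\nabla\cdot V\in L^\infty$: one must show this tends to zero (or to a term absorbed by the right-hand side). The strategy is to bound it by $\|V\|_{L^2}\,\|(u_1-u_2)H_\delta'(p_1-p_2)\nabla(p_1-p_2)\|_{L^2}$ and exploit that on the set $[0<p_1-p_2<\delta]$ the factor $u_1-u_2$ is small — but $u_1-u_2$ need not be small where $p_1-p_2$ is small unless one uses the strict monotonicity of $s\mapsto s^m$ quantitatively. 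A safer device is to first perform an integration by parts moving the gradient off $H_\delta(p_1-p_2)$ onto $(u_1-u_2)V$, writing the drift term as $\int_\Omega H_\delta(p_1-p_2)\,\nabla\cdot((u_1-u_2)V) = \int_\Omega H_\delta(p_1-p_2)(u_1-u_2)\nabla\cdot V + \int_\Omega H_\delta(p_1-p_2)V\cdot\nabla(u_1-u_2)$; the first piece is $\leq \|\nabla\cdot V\|_\infty\int_\Omega(u_1-u_2)^+$ in the limit, which feeds a Gronwall argument, and the second piece, after writing $V\cdot\nabla(u_1-u_2)$ in terms of $\nabla(p_1-p_2)$ using the chain rule for Sobolev functions and a renormalization, again localizes to $[0<p_1-p_2<\delta]$ and vanishes. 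Once all limits are taken, one arrives at $\frac{d}{dt}\int_\Omega(u_1-u_2)^+ \leq \int_\Omega \kappa(f_1-f_2)$ with $\kappa$ the weak-$*$ limit of $H_\delta(p_1-p_2)$, which lies in $\signp(u_1-u_2)$ a.e. since $H_\delta(p_1-p_2)\to\signp(p_1-p_2)=\signp(u_1-u_2)$ off the zero set and is trapped in $[0,1]$ on it. The two corollaries follow immediately: part (1) by applying the same estimate to $(u_2-u_1)^+$ and adding (with $\kappa\in\signp$ giving $|\kappa|\leq 1$), and part (2) by taking $f_1\leq f_2$, $u_1(0)\leq u_2(0)$, integrating the differential inequality in time, and concluding $\int_\Omega(u_1-u_2)^+ = 0$.
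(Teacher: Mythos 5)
Your plan founders on the drift term, and the difficulty you yourself flag is fatal to the route you propose. After testing the subtracted weak formulations with $H_\delta(p_1-p_2)$, the term $\int_\Omega (u_1-u_2)\,H_\delta'(p_1-p_2)\,V\cdot\nabla(p_1-p_2)\,dx$ cannot be rewritten as $\int_\Omega V\cdot\nabla \Psi_\delta$ for a primitive $\Psi_\delta$ that vanishes in the limit, precisely because $u_1-u_2$ is not a pointwise function of $p_1-p_2$; and neither of your two fallback devices works. The integration by parts onto $\nabla\cdot\bigl((u_1-u_2)V\bigr)$ is not available, since $u_i=|p_i|^{1/m}\,\mathrm{sign}(p_i)$ with $p_i\in H^1_0(\Omega)$ does not give $u_i\in W^{1,1}$ (the $1/m$-th root is not Lipschitz at $0$), so $V\cdot\nabla(u_1-u_2)$ has no meaning as an integrable function. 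The localization argument is quantitatively insufficient: on $[0<p_1-p_2<\delta]$ one only has $|u_1-u_2|\le 2\,\delta^{1/m}$ while $H_\delta'=1/\delta$, so the term is of size $\delta^{1/m-1}$ times an $L^2$ quantity; even after absorbing half of it into the retained dissipation $\frac1\delta\int_{[0<p_1-p_2<\delta]}|\nabla(p_1-p_2)|^2$ by Young's inequality, the remainder is of order $\delta^{2/m-1}\Vert V\Vert_{L^2}^2$, which blows up for $m>2$. This is exactly the classical obstruction for convection terms depending on $u$ rather than on $u^m$, and it is why the paper does not test with a function of $p_1-p_2$ at all: it first derives entropy inequalities against constants $k$ (Proposition \ref{pentropic}, where the chain rule \emph{does} apply because $k$ is frozen, so the drift contribution is $\int V\cdot\nabla\Psi_\sigma\to 0$), then doubles and de-doubles the variables in $(t,x)$ à la Kruzhkov--Carrillo to obtain Kato's inequality \eqref{ineqkato} in $\D'(Q)$, and only then removes the interior cutoff.

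Two further points are blurred in your write-up. First, the passage $\frac{d}{dt}\int_\Omega(u_1-u_2)\,H_\delta(p_1-p_2)\to\frac{d}{dt}\int_\Omega(u_1-u_2)^+$ involves a time-dependent test function, which the weak formulation \eqref{evolwf} does not directly admit; the paper handles this with Steklov averaging in time and a convexity inequality in the proof of Proposition \ref{pentropic}, and some such device is needed, not just dominated convergence. Second, the role of the outpointing condition \eqref{HypV} is not, as you suggest, to prevent a boundary term from the $H^1_0$ test function: it enters only at the final step, when Kato's inequality is tested with the boundary cutoffs $\xi_h$ of \eqref{xih} and one must show $\limsup_{h\to 0}\int_\Omega (u_1-u_2)^+\,V\cdot\nabla\xi_h\,dx\le 0$ (together with \eqref{h10prop} for the diffusion term) in order to replace $\xi_h$ by $1$ and reach \eqref{evolineqcomp}. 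Your concluding deductions of parts (1) and (2) from \eqref{evolineqcomp} are fine, but the core estimate is not established by your argument.
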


  \begin{theorem}\label{texistevolm}
For any $u_0\in L^2(\Omega) $ and $f\in L^2(Q),$ the problem \eqref{pmef} has a   solution $u.$ 
  	Moreover, $u$ satisfies the following : 
  	\begin{enumerate}
  		
  		\item For any $q\in [1,\infty],$ we have  
  		\begin{equation} \label{lquevol} 
  			\Vert u(t)\Vert_q    \leq  M_q:=  \left\{ 
  			\begin{array}{lll}
  				e^{ (q-1)\:  T\:  \Vert (\nabla \cdot V)^-\Vert_\infty  }  \left( \Vert u_0 \Vert_q +   \int_0^T   \Vert f (t)\Vert_q \: dt  \right ) \quad &\hbox{ if } & q<\infty  \\ 
  				e^{ T \: \Vert (\nabla \cdot V)^-\Vert_\infty  }  \left( \Vert u_0 \Vert_\infty +   \int_0^T   \Vert f (t)\Vert_\infty \: dt  \right)&\hbox{ if } & q=\infty  
  			\end{array}\right.  . 
  		\end{equation}

  		\item  For any $t\in [0,T),$ we have 
  		\begin{equation}\label{lmuevol} 
  			\begin{array}{c} 
  				\frac{1}{m+1}  \frac{d}{dt} \int_\Omega \vert u\vert ^{m+1} \: dx+  \int_\Omega \vert \nabla p\vert^2  \: dx\leq     \int_\Omega f\:p \: dx +       \int     p \: u  \:  (\nabla \cdot  V )^-    \: dx, \quad \hbox{ in }\D'(0,T).
  			\end{array} 
  		\end{equation} 	       
  		
  	\end{enumerate}
  \end{theorem}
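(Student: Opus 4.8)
The plan is to produce the solution as a limit of solutions of non-degenerate regularizations, to read the two estimates off the structure of the equation on these regularizations, and to identify the limit as a weak solution using strong $L^1$-compactness together with the uniqueness statement of Theorem \ref{tcompcmef}. It is convenient to first treat the case of bounded data $u_0\in L^\infty(\Om)$, $f\in L^\infty(Q)$, and to recover general $u_0\in L^2(\Om)$, $f\in L^2(Q)$ at the end by density (the limit being controlled in $\C([0,T),L^1(\Om))$ by Theorem \ref{tcompcmef} and in the relevant norms by \eqref{lquevol} and \eqref{lmuevol}).

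\textbf{Approximation and a priori estimates.} For $\eps>0$ I replace the degenerate diffusion by the uniformly parabolic operator $u\mapsto-\Delta(u^m+\eps u)$ (equivalently, regularize $r\mapsto r^m$ by $\beta_\eps$ with $\beta_\eps'\geq\eps>0$) and solve $\partial_t u_\eps-\Delta(u_\eps^m+\eps u_\eps)+\nabla\cdot(u_\eps V)=f$ with homogeneous Dirichlet condition and initial datum $u_0$; existence of $u_\eps\in L^2(0,T;H^1_0(\Om))\cap\C([0,T],L^2(\Om))$ is classical (Galerkin in the $H^1_0(\Om)$ basis with a monotonicity or fixed-point treatment of the lower-order perturbation $\nabla\cdot(u_\eps V)$, harmless since $V\in W^{1,2}(\Om)$ and $\nabla\cdot V\in L^\infty(\Om)$). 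Testing the $\eps$-equation with $\abs{u_\eps}^{q-2}u_\eps$, $q<\infty$: the time term gives $\tfrac1q\tfrac{d}{dt}\norm{u_\eps(t)}_q^q$, both diffusion terms are nonnegative, and the drift term, after integration by parts and using $u_\eps=0$ on $\partial\Om$ so that the boundary integrals vanish, equals $\tfrac{q-1}{q}\int_\Om(\nabla\cdot V)\abs{u_\eps}^q$, hence is bounded below by $-(q-1)\norm{(\nabla\cdot V)^-}_\infty\int_\Om\abs{u_\eps}^q$; together with $\int_\Om f\abs{u_\eps}^{q-2}u_\eps\leq\norm{f}_q\norm{u_\eps}_q^{q-1}$ this gives $\tfrac{d}{dt}\norm{u_\eps(t)}_q\leq\norm{f(t)}_q+(q-1)\norm{(\nabla\cdot V)^-}_\infty\norm{u_\eps(t)}_q$, and Gronwall's lemma yields \eqref{lquevol} uniformly in $\eps$; the case $q=\infty$ follows by letting $q\to\infty$. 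Testing instead with $p_\eps:=u_\eps^m\in L^2(0,T;H^1_0(\Om))$: the time term is $\tfrac1{m+1}\tfrac{d}{dt}\int_\Om\abs{u_\eps}^{m+1}$ by the chain rule, the leading diffusion term produces $\int_\Om\abs{\nabla p_\eps}^2$ (the $\eps$-part only helps), the source gives $\int_\Om f\,p_\eps$, and the drift term is $\int_\Om u_\eps V\cdot\nabla p_\eps=\tfrac{m}{m+1}\int_\Om V\cdot\nabla\abs{u_\eps}^{m+1}=-\tfrac{m}{m+1}\int_\Om(\nabla\cdot V)\abs{u_\eps}^{m+1}$ (boundary term vanishing since $u_\eps=0$ on $\partial\Om$), which is $\leq\int_\Om(\nabla\cdot V)^-\,p_\eps u_\eps$ because $\tfrac{m}{m+1}<1$ and $p_\eps u_\eps=\abs{u_\eps}^{m+1}\geq0$; this is precisely \eqref{lmuevol}.

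\textbf{Passage to the limit and main obstacle.} The estimates bound $u_\eps$ in $L^\infty(0,T;L^q(\Om))$ for every $q$ (bounded data), $p_\eps=u_\eps^m$ in $L^2(0,T;H^1_0(\Om))$, and hence $\partial_t u_\eps$ in $L^2(0,T;H^{-1}(\Om))$ (reading it off the equation and controlling $\nabla\cdot(u_\eps V)$ via $V\in W^{1,2}(\Om)\cap L^\infty(\Om)$ and the $L^q$-bounds). To pass to the limit in the nonlinear terms $u_\eps^m$ and $u_\eps V$ one needs strong compactness of $\{u_\eps\}$ in $L^1(Q)$; I would obtain it from the $BV_{loc}(\Om)$ estimate announced in the introduction (proven for the resolvent/stationary problem and propagated to the evolution problem), which gives spatial compactness that, combined with the time-equicontinuity of $\{u_\eps\}$, upgrades weak to strong $L^1(Q)$ convergence — alternatively from an Aubin--Lions argument on $p_\eps$ together with the elementary inequality $\abs{u_\eps-u_\delta}^{m+1}\leq c_m(u_\eps^m-u_\delta^m)(u_\eps-u_\delta)$. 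Then $u_\eps^m\to u^m$ a.e.\ and weakly in $L^2$, $u_\eps V\to uV$, and one passes to the limit in \eqref{evolwf} (all terms being integrable by \eqref{lquevol} and \eqref{lmuevol}); continuity $u\in\C([0,T),L^1(\Om))$ with $u(0)=u_0$ follows from time-equicontinuity of $\{u_\eps\}$, and Theorem \ref{tcompcmef} shows the limit does not depend on the approximation and lets one relax finally to $u_0\in L^2(\Om)$, $f\in L^2(Q)$. The main obstacle is exactly this strong $L^1$-compactness of $\{u_\eps\}$ and the control of the drift term up to the boundary, where only the weak regularity of $V$ is available; this is the technical core, and it is there that the $BV_{loc}$ bound and the outpointing condition \eqref{HypV} enter. (An alternative, in the spirit of the introduction, sets up the $m$-accretive realization in $L^1(\Om)$ of $u\mapsto-\Delta u^m+\nabla\cdot(uV)$ with homogeneous Dirichlet condition, obtains a mild solution in $\C([0,T),L^1(\Om))$ by the Crandall--Liggett formula, and uses the estimates above — established first on the resolvent level — to upgrade it to a weak solution.)
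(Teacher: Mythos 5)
Your route is genuinely different from the paper's. The paper never regularizes the parabolic problem directly: it solves the stationary resolvent problem (Theorem \ref{texistm}, via the $\beta_\eps$-regularization, the estimates \eqref{lqstat}--\eqref{lmst} and a Minty argument), observes that the operator $\A_m$ is accretive in $L^1(\Omega)$ (a consequence of Theorem \ref{tcompcmef}) and satisfies the range condition (Lemma \ref{lAm}), and then runs the implicit Euler scheme: Crandall--Liggett (Theorem \ref{Crandall-Liggett}) gives the strong convergence of the $\eps$-approximate solutions in $\C([0,T);L^1(\Omega))$ for free, the discrete analogues \eqref{lquepsevol} and \eqref{lmuepsevol} of your two estimates are obtained by iterating the stationary bounds and summing, and the only PDE-level limit needed is the weak $L^2(0,T;H^1_0(\Omega))$ convergence of $p_\eps$, the nonlinearity being identified through the a.e.\ convergence of $u_\eps$ (Proposition \ref{pconv}). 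Your integrations by parts for \eqref{lquevol} and \eqref{lmuevol} are essentially the same computations carried out at the continuous parabolic level, and they are sound; note only that your own computation yields the factor $\tfrac{q-1}{q}\Vert(\nabla\cdot V)^-\Vert_\infty$ rather than the $(q-1)\Vert(\nabla\cdot V)^-\Vert_\infty$ you wrote, and it is the former that makes your passage ``$q\to\infty$'' legitimate (with $(q-1)$ the exponential constant blows up as $q\to\infty$; the paper instead treats $q=\infty$ separately at the stationary level).

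The genuine weak point is the compactness step, which is precisely what the semigroup framework is designed to avoid. Your first-choice tool, the $BV_{loc}$ bound, is not available under the hypotheses of this theorem: in the paper it is Theorem \ref{tbvm}, proved under the additional assumptions \eqref{HypsupportV}, $V\in W^{1,\infty}(\Omega)^N$, $\nabla\cdot V\in W^{1,2}_{loc}(\Omega)$ and $f\in BV_{loc}(\Omega)$, none of which is assumed in Theorem \ref{texistevolm} (which only requires $V\in W^{1,2}(\Omega)$, $\nabla\cdot V\in L^\infty(\Omega)$ and the outpointing condition \eqref{HypV}); in the paper these estimates serve the incompressible limit $m\to\infty$, not existence at fixed $m$. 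Your fallback --- translate estimates combined with $\vert u_\eps-u_\delta\vert^{m+1}\leq c_m\,(u_\eps^m-u_\delta^m)(u_\eps-u_\delta)$, i.e.\ an Alt--Luckhaus-type argument --- is the correct tool at fixed $m$ with bounded data, but it is the technical core of your route and you leave it as a one-line sketch. Two further small points: $V\in L^\infty(\Omega)$, which you invoke for the $H^{-1}$ bound on $\partial_t u_\eps$, is not assumed (for $N\geq2$, $W^{1,2}(\Omega)\not\subset L^\infty(\Omega)$), although in your first step with bounded data the uniform $L^\infty$ bound on $u_\eps$ makes $V\in L^2(\Omega)$ sufficient; and in the final relaxation to $u_0\in L^2(\Omega)$ the uniform $L^2(0,T;H^1_0(\Omega))$ control of $p$ through \eqref{lmuevol} involves $\int_\Omega\vert u_0\vert^{m+1}$-type quantities, so this step needs more care than ``by density'' suggests. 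By contrast, the paper's mild-solution route requires no parabolic compactness lemma at all, and it builds exactly the structure (resolvent convergence of $\A_m$, Theorem \ref{trcontinuity}) that is reused later for the limit $m\to\infty$ --- which is what the semigroup approach buys.
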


  \begin{corollary}\label{cpositif}
   For any $0\leq u_0\in L^2(\Omega) $ and $0\leq f\in L^2(Q),$ the problem \eqref{pmef} has a   unique nonnegative solution $u$.
  \end{corollary}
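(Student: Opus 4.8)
The plan is to deduce the corollary directly from Theorems~\ref{texistevolm} and~\ref{tcompcmef}, the only genuine verification being that the null function qualifies as an admissible comparison solution. For \emph{existence}, I would simply invoke Theorem~\ref{texistevolm}: since $0\le u_0\in L^2(\Omega)$ and $0\le f\in L^2(Q)$ fall under its hypotheses ($u_0\in L^2(\Omega)$, $f\in L^2(Q)$), it furnishes a solution $u$ of \eqref{pmef} in the sense of Definition~\ref{defws}, i.e.\ $u\in\C([0,T),L^1(\Omega))$ with $u(0)=u_0$, $p:=u^m\in L^2(0,T;H^1_0(\Omega))$, and \eqref{evolwf} holding.

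For \emph{nonnegativity}, the key observation is that $u\equiv 0$ is itself a solution of \eqref{pmef} with data $f_1\equiv 0$ and $u_1(0)\equiv 0$: indeed $0^m=0\in L^2(0,T;H^1_0(\Omega))$, the identity \eqref{evolwf} is trivially satisfied with right-hand side $0$, and $0\in\C([0,T),L^1(\Omega))$. Since $f_1=0\le f$ a.e.\ in $Q$ and $u_1(0)=0\le u_0$ a.e.\ in $\Omega$, the ordering statement of Theorem~\ref{tcompcmef}, part~(2), applied with $u_2:=u$ and $f_2:=f$, yields $0\le u$ a.e.\ in $Q$. Hence the solution delivered by Theorem~\ref{texistevolm} is nonnegative, which settles the existence of a nonnegative solution.

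For \emph{uniqueness}, let $u_1,u_2$ be any two solutions of \eqref{pmef} associated with the same pair $(u_0,f)$ (a fortiori, any two nonnegative ones). Theorem~\ref{tcompcmef}, part~(1), with $f_1=f_2=f$ gives $\frac{d}{dt}\Vert u_1-u_2\Vert_1\le 0$ in $\D'(0,T)$; since $t\mapsto\Vert u_1(t)-u_2(t)\Vert_1$ is continuous on $[0,T)$ (both $u_i\in\C([0,T),L^1(\Omega))$) and vanishes at $t=0$ (both satisfy $u_i(0)=u_0$), it is identically zero, so $u_1=u_2$. I expect no real obstacle here: the substance is already contained in Theorems~\ref{texistevolm} and~\ref{tcompcmef}, and the only points meriting a line of justification are that $u\equiv 0$ is a weak solution (so that the comparison principle applies) and that uniqueness in fact holds among \emph{all} solutions, so that the adjective ``nonnegative'' in the statement merely pins down which solution is meant, the identification being automatic once uniqueness is unconditional.
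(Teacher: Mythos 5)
Your proposal is correct and is exactly the argument the paper intends (the corollary is left as an immediate consequence of Theorems \ref{texistevolm} and \ref{tcompcmef}): existence from Theorem \ref{texistevolm}, nonnegativity by comparing with the trivial solution $u\equiv 0$ via part (2) of Theorem \ref{tcompcmef}, and uniqueness from the $L^1$-contraction in part (1) together with continuity at $t=0$. Your remark that uniqueness holds among all solutions, so that ``nonnegative'' merely records a property of the unique solution, is also accurate.
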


   \begin{remark}\label{RemboundaryCond1}
   	
   	The assumption \eqref{HypV}	 is technical for the the proof of Theorem \eqref{tcompcmef}. This assumption is fulfilled for a large  class of vector field $V$, like for instance the case where $V$ is  outward pointing in a neighborhood of  the boundary.  We think that this condition could be  removed if favor of merely \eqref{HypV0}  for instance if the solution $\rho$ has a trace on the boundary (for instance if one works with $BV$ solution).   We postpone the technicality of this assumption in  Remark \ref{RemboundaryCond1} after the proof of Theorem \ref{tcompcmef}.

   \end{remark}

 \subsection{Incompressible limit results}  
 
As we said in the introduction,   as $m\to\infty,$  the problem \eqref{pmef}   converges formally to  so called Hele-Shaw problem 
 \begin{equation} \label{evolhs0}
	\left\{  
	\begin{array}{ll}\left.
		\begin{array}{l}
			\displaystyle \frac{\partial u }{\partial t}  -\Delta p + \nabla \cdot (u\: V) =f   \\
			\displaystyle u \in \sign(p)\end{array}\right\}
		\quad  & \hbox{ in } Q \\  \\
		\displaystyle p= 0  & \hbox{ on }\Sigma  
	\end{array} \right.
\end{equation}
Existence, $L^1-$comparison  and uniqueness  of weak solution for the problem \eqref{evolhs0}, with mixed boundary conditions,  has been studied recently in \cite{Igshuniq} (see also \cite{IgshuniqR}) under the assumption  \eqref{HypV}. Thanks to \cite{Igshuniq}, we know that  for any $f\in L^2(Q)$ and $u_0\in L^\infty(\Omega),$ s.t. $0\leq \vert u_0\vert \leq 1,$ a.e. in $\Omega,$  \eqref{evolhs0} has a unique weak solution (see the following Theorem for the precise sense) satisfying $u(0)=u_0.$

\medskip 
 To prove rigorously the convergence of $u_m$ to the solution of   \eqref{evolhs0}, we assume moreover  that $V$ satisfies the following assumption :  there exists  $h_0>0,$ such that   for any $0< h< h_0,$ we have  
 
\begin{equation}   	\label{HypsupportV}
V(x)\cdot \nu(\pi(x)) \geq 0,\quad \hbox{ for a.e. } x\in D_h.
\end{equation} 
  This  may be written also as  $V(x)\cdot \nabla d(x,\partial\Omega) \leq 0,$ for any $x\in \Omega $   being such that $d(x,\partial\Omega) < h_0.$ See that the condition \eqref{HypsupportV} implies definitely  \eqref{HypV}.     In fact, with \eqref{HypsupportV}, we are assuming  that  $V$ is outpointing along the paths given by the distance function in a neighborhood of $\partial \Omega.$  As we will see, this assumption can be weaken into an outpointing vector field condition  along a given arbitrary paths in the neighborhood of $\partial \Omega$ (cf. Remark \ref{Rbvcond}).     Nevertheless, a control of the outpointing orientation of $V$ in the neighborhood of the boundary  seems to be important in order to handle the oscillation of $u_m$ and establish $BV_{loc}$-estimate.

\begin{theorem}\label{treglimum}
Under the assumption \eqref{HypsupportV}, 	for each $m=1,2,...,$ we consider  $u_{0m}\in L^2(\Omega),$ $f_m\in L^2(Q)$ and $u_m$  be the corresponding     solution of \eqref{pmef}. 
	If,   as $m\to\infty,$    $	f_{ m} \to f $ in $L^1(Q),$ $u_{0m} \to u_0 $ in $L^1(\Omega), $	 	and   $\vert u_{0}\vert \leq 1,$ then   
	\begin{equation}
		u_m\to u,\quad \hbox{ in }\C([0,T);L^1(\Omega)), 
	\end{equation}
	\begin{equation}
		u_m^m\to p,\quad \hbox{ in }L^2([0,T);H^1_0(\Omega))\hbox{-weak},
	\end{equation}
	and $(u,p)$ is the unique solution of \eqref{evolhs0} satisfying $u(0)=u_0.$ That is  $ u  \in \C([0,T),L^1(\Omega))  ,$ $u(0)=u_0$,     $u\in \sign(p) $ a.e. in $Q,$ and  
	\begin{equation}\label{weakformhs}
		\frac{d}{dt} \int_\Omega u\: \xi+ \int_\Omega \nabla p\cdot \nabla \xi\: dx -\int_\Omega u\: V\cdot \nabla \xi\: dx =\int_\Omega\: f\: \xi\: dx,\quad \hbox{ in }\D'([0,T)),  \hbox{ for any }\xi\in H^1_0(\Omega).
	\end{equation}  
\end{theorem}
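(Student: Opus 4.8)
The plan is to follow the Bénilan--Crandall strategy announced in the introduction: first obtain enough a priori estimates on the family $(u_m)$ to extract limits, then identify the limit with the unique weak solution of \eqref{evolhs0}. I would begin with the uniform bounds already available from Theorem \ref{texistevolm}(1): since $\vert u_{0m}\vert$ is bounded in every $L^q$ and $f_m$ converges in $L^1(Q)$, the constants $M_q$ give $\Vert u_m(t)\Vert_q\leq M_q$ uniformly in $m$ and $t$ for every $q\leq\infty$ (one first truncates to get $L^\infty$ control of $u_{0m}$, or keeps $q<\infty$ and passes $q\to\infty$ at the end). In particular $\limsup_m\Vert u_m\Vert_{L^\infty(Q)}\leq 1+o(1)$ in a suitable sense, which is what forces $\vert u\vert\leq 1$ and $p$ supported where $\vert u\vert=1$ in the limit. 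Next, from the energy inequality \eqref{lmuevol}, together with the bound $\int p_m u_m(\nabla\cdot V)^-\leq\Vert(\nabla\cdot V)^-\Vert_\infty\int\vert u_m\vert^{m+1}$ and $\int f_m p_m\leq\tfrac12\int\vert\nabla p_m\vert^2+C$ via Poincar\'e, I would deduce that $p_m=u_m^m$ is bounded in $L^2(0,T;H^1_0(\Omega))$ independently of $m$. This gives weak-$L^2H^1_0$ compactness of $p_m$ and, via the equation \eqref{evolwf}, an $H^{-1}$-type bound on $\partial_t u_m$.

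The crucial compactness input — and the step I expect to be the main obstacle — is the strong $L^1$ convergence of $u_m$, uniformly in $t$, i.e.\ convergence in $\C([0,T);L^1(\Omega))$ rather than merely in $L^1(Q)$. As flagged in the introduction, this rests on a new $BV_{loc}$ estimate for the stationary problem, proved under the outpointing hypothesis \eqref{HypsupportV}; the geometric role of \eqref{HypsupportV} is precisely to prevent mass from oscillating near $\partial\Omega$ when one translates and differences solutions. I would invoke that stationary $BV_{loc}$ bound (established earlier in the paper) and transfer it to the evolution problem through the nonlinear semigroup machinery: the resolvent of the stationary operator is order-preserving and $L^1$-contractive by Theorem \ref{tcompcmef}, so the evolution solutions are obtained by Crandall--Liggett, and equiboundedness plus the stationary $BV_{loc}$ estimate yields a spatial modulus of continuity for $u_m(t)$ uniform in $m$ and locally uniform in $x$. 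Combined with the $L^1$-equicontinuity in $t$ coming from $\frac{d}{dt}\Vert u_m-\tau_h u_m\Vert_1\leq\cdots$ (the contraction estimate applied to shifted data) this gives relative compactness in $\C([0,T);L^1_{loc}(\Omega))$; boundary layers are excluded because $\vert u_0\vert\leq 1$ makes the initial data already compatible, and the uniform $L^\infty$ bound upgrades $L^1_{loc}$ to $L^1(\Omega)$. Passing to a subsequence, $u_m\to u$ in $\C([0,T);L^1(\Omega))$ and $p_m\rightharpoonup p$ in $L^2(0,T;H^1_0(\Omega))$.

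It remains to identify $(u,p)$. Passing to the limit in \eqref{evolwf} is routine for the linear terms; the term $\int_\Omega u_m V\cdot\nabla\xi$ converges because $u_m\to u$ strongly in $L^2(Q)$ (interpolating the $L^1$ convergence against the uniform $L^\infty$ bound) and $V\in W^{1,2}$, so $(u,p)$ satisfies \eqref{weakformhs}. The constraint $u\in\sign(p)$ is the one genuinely nonlinear identification: I would argue that $\vert u\vert\leq 1$ a.e.\ from the $L^q$ bounds as $q\to\infty$; that wherever $p\neq 0$ one has $\vert p_m\vert=\vert u_m\vert^m\to\infty$ unless $\vert u_m\vert\to 1$, forcing $\vert u\vert=1=\operatorname{sign}(p)u$ there by a standard Minty/monotonicity argument on the graph $\sign$ using the strong convergence of $u_m$ and weak convergence of $p_m$; and where $p=0$ the inclusion $u\in[-1,1]=\sign(0)$ is automatic. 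Finally $u(0)=u_0$ passes to the limit from $u_m(0)=u_{0m}$ using the $\C([0,T);L^1(\Omega))$ convergence, and uniqueness of the solution of \eqref{evolhs0} (quoted from \cite{Igshuniq} under \eqref{HypV}, which \eqref{HypsupportV} implies) shows the whole family converges, not just a subsequence. The cleanest way to organize the identification and the uniqueness-of-limit step together is to note that the limit semigroup generated by the (multivalued) Hele-Shaw operator is, by the stationary convergence established in the paper, the limit of the porous-medium semigroups, so Theorem \ref{treglimum} follows from the general convergence-of-semigroups theorem once the stationary resolvents are shown to converge in $L^1$.
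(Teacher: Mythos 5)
Your final organization is, in substance, exactly the paper's proof: strong $L^1$ convergence of the stationary resolvents (Corollary \ref{cconvam}, which rests on the $BV_{loc}$ estimate of Theorem \ref{tbvm} under \eqref{HypsupportV}) combined with the abstract convergence theorem for nonlinear semigroups (Theorem \ref{trcontinuity}) yields $u_m\to u$ in $\C([0,T);L^1(\Omega))$; the energy estimate then gives $p_m=u_m^m$ bounded in $L^2(0,T;H^1_0(\Omega))$, hence weakly convergent along a subsequence; a monotonicity (Minty) argument identifies $u\in\sign(p)$ and allows passage to the limit in the weak formulation; and uniqueness of the limit problem is quoted from \cite{Igshuniq}. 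So you take the same route as the paper.

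Two of your intermediate steps, however, are not justified under the stated hypotheses and should be dropped rather than repaired. First, you repeatedly invoke a uniform-in-$m$ bound on $\Vert u_m\Vert_{L^q}$, up to $q=\infty$ (to force $\vert u\vert\le 1$, to upgrade $L^1_{loc}$ to $L^1(\Omega)$, and to get strong $L^2(Q)$ convergence by interpolation): the data $u_{0m}$ and $f_m$ are only assumed to converge in $L^1$, so the constants $M_q$ in \eqref{lquevol} are not uniformly controlled and a statement like $\limsup_m\Vert u_m\Vert_{L^\infty(Q)}\le 1+o(1)$ has no basis; in the paper the bound $\vert u\vert\le 1$ is not obtained this way, but comes from the limit dynamics itself (the mild solution of the limit problem lives in $\overline{\D(\A)}=\{\vert z\vert\le 1\}$, equivalently from the inclusion $u\in\sign(p)$ obtained by monotonicity). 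Second, your principal compactness mechanism --- transferring the stationary $BV_{loc}$ estimate into a spatial modulus of continuity for $u_m(t)$ uniform in $m$ through the Crandall--Liggett iteration, combined with temporal $L^1$ equicontinuity --- is not what the paper does and is not established there: it would require showing that the resolvents propagate $BV_{loc}$ bounds with constants uniform in $m$ and in the number of time steps. The paper sidesteps this entirely, because Theorem \ref{trcontinuity} needs only $L^1$ convergence of the resolvents, and that is the sole place where the $BV_{loc}$ estimate (for the stationary problem) is used. With these two detours removed, your proposal coincides with the paper's argument.
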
 
The results for the case where $f$ is given by a reaction term $g(.,u),$ are develop in  Section \ref{Sreaction}.

\begin{remark}
Thanks to \cite{Igshuniq}, we can deduce that $u,$ the limit of $u_m,$ satisfies the following  : 
\begin{enumerate}
	\item   If there exists $\omega_1,   \in W^{1,1}(0,T)$  (resp.  $  \omega_2  \in W^{1,1}(0,T)$) such that  $ u_0\leq \omega_2(0)$ (resp. $\omega_1(0)\leq u_0 $) and, for any $t\in (0,T),$  
	\begin{equation}\label{sup}
		\dot \omega_2(t)+ \omega_2(t)\nabla \cdot V \geq  f(t,.) \quad \hbox{ a.e. in  }\Omega
	\end{equation}
 (rep. $\dot \omega_1(t)+ \omega_1(t)\nabla \cdot V \leq   f(t,.),$ a.e. in  $\Omega), $ 
 then   we have  
	\begin{equation}
		u\leq \omega_2 \quad (\hbox{resp. }\omega_1\leq u)\quad \hbox{  a.e. in }Q. 
	\end{equation}  
	
	\item   If $f$ and $V$ satisfies   
	\begin{equation}
	    0\leq f \leq \nabla \cdot V    ,  \hbox{ a.e.  in } Q   
	\end{equation}
	then $p\equiv 0,$ and $u$ is     the unique solution of the reaction-transport equation 
	\begin{equation}
		\label{cmep0}
		\left\{  \begin{array}{ll}
			\left. \begin{array}{l}
				\displaystyle \frac{\partial u }{\partial t}   +\nabla \cdot (u  \: V)= f\\ 
			0\leq  u  \leq 1 
			\end{array}\right\} 	\quad  & \hbox{ in } Q \\  
			\displaystyle u  \: V\cdot \nu = 0  & \hbox{ on }\Sigma_N \\   
			\displaystyle  u (0)=u _0 &\hbox{ in }\Omega,\end{array} \right.
	\end{equation}
	in the sense that $u  \in \C([0,T),L^1(\Omega)),$  $0\leq  u  \leq 1$  a.e. in $Q$  and
	\begin{equation}
		\label{evolwg}
		\displaystyle \frac{d}{dt}\int_\Omega u \:\xi- \int_\Omega   u \:V \cdot  \nabla\xi   =     \int_\Omega  f\: \xi  , \quad \hbox{ in }{\D}'(0,T), \quad \forall \: \xi\in H^1_0(\Omega).
	\end{equation}
\end{enumerate}

\end{remark}

\begin{remark}\label{remV}
	
	One sees that the assumption \eqref{HypsupportV} is fulfilled for instance in the following cases  : 
	 
		\begin{enumerate}
		
		\item $V$ satisfies \eqref{HypV0},  and there exists $h_0>0$ such that, for any $0<h<h_0,$ we have 
		\begin{equation}
			V(x)=V(\pi(x)),\quad \hbox{ for any }x\in \Omega_h.
		\end{equation} 
		Indeed, since $\nu_h(x)= \nu(\pi(x))$, we have  $ V(x)\cdot \nabla \xi_h(x)= V(\pi(x))\cdot \nabla \xi(\pi(x)) $ which is nonnegative by the assumption \eqref{HypV0}.

			\item $V$ compactly supported  ; i.e.  $V$ vanishes on a neighbor of the boundary $\partial \Omega.$ 
			
		\end{enumerate}

\end{remark}

\begin{lemma}
	 Under the assumption \eqref{HypsupportV},   there exists  $h_0>0,$ such that 
	for any $0<h<h_0,$ there exists $0\leq \omega_h\in \mathcal C^2(\Omega_h)$ compactly supported in $\Omega,$ such that $\omega_h \equiv 1$ in  $    \Omega_h$ and 
	\begin{equation}  \label{HypsupportV2}  
		\	\int_{\Omega\setminus \Omega_h}  \varphi\:  V\cdot \nabla \omega_h  \: dx \leq 0,\quad \hbox{ for any }0\leq \varphi\in L^1(\Omega).
	\end{equation}
	
	\end{lemma}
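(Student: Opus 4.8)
The plan is to take $\omega_h$ to be a smooth one–dimensional cut–off of the distance–to–the–boundary function $d:=d(\cdot,\partial\Omega)$, arranged so that on $\Omega\setminus\Omega_h$ the gradient $\nabla\omega_h$ is a \emph{nonnegative} multiple of $\nabla d$; the sign inequality \eqref{HypsupportV2} then falls out immediately from the reformulation $V\cdot\nabla d\le 0$ of \eqref{HypsupportV} noted just above the statement, together with $\varphi\ge 0$.

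First I would fix the analytic ingredients. By the local $\mathcal C^{2}$ regularity of $\partial\Omega$ there is $h_{1}>0$ such that $d\in\mathcal C^{2}$ on the tubular strip $\{\,0<d<h_{1}\,\}$, with $|\nabla d|\equiv 1$ there (a classical fact); after replacing the $h_{0}$ of \eqref{HypsupportV} by $\min\{h_{0},h_{1}\}$ — which only strengthens \eqref{HypsupportV} — both $d\in\mathcal C^{2}$ and $V\cdot\nabla d\le 0$ a.e.\ hold on $\{\,d<h_{0}\,\}$. Next, for a fixed $0<h<h_{0}$ I would choose once and for all a function $\psi\in\mathcal C^{\infty}(\RR)$ with $0\le\psi\le 1$, $\psi'\ge 0$, $\psi\equiv 0$ on $(-\infty,\tfrac12]$ and $\psi\equiv 1$ on $[1,\infty)$, and set
$$\omega_{h}(x)=\psi\!\Big(\frac{d(x)}{h}\Big)\quad\text{on }\{\,d<h_{0}\,\},\qquad \omega_{h}\equiv 1\quad\text{on }\{\,d\ge h_{0}\,\}.$$
Because $h<h_{0}$, on the overlap $\{\,h\le d<h_{0}\,\}$ the first formula already equals $1$, so the two pieces agree and $\omega_{h}\equiv 1$ on the open set $\{\,d>h\,\}$; hence $\omega_{h}$ is locally constant near $\{\,d=h_{0}\,\}$, is $\mathcal C^{2}$ on $\{\,d<h_{0}\,\}$ as $\psi$ composed with the $\mathcal C^{2}$ map $d/h$, and vanishes identically on the neighbourhood $\{\,d\le h/2\,\}$ of $\partial\Omega$. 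Thus $\omega_{h}\in\mathcal C^{2}(\overline{\Omega})$, $0\le\omega_{h}\le 1$, $\operatorname{supp}\omega_{h}\subset\{\,d\ge h/2\,\}$ is a compact subset of $\Omega$, and $\omega_{h}\equiv 1$ on $\Omega_{h}=\{\,d>h\,\}$.

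Finally I would verify the sign. On $\Omega\setminus\Omega_{h}=\{\,d\le h\,\}\subset\{\,d<h_{0}\,\}$ the chain rule gives $\nabla\omega_{h}=\tfrac1h\,\psi'(d/h)\,\nabla d$ (with $\psi'(1)=0$ this is valid up to $\{d=h\}$ as well), whence
$$V\cdot\nabla\omega_{h}=\frac1h\,\psi'\!\Big(\frac{d}{h}\Big)\,V\cdot\nabla d\le 0\qquad\text{a.e.\ on }\Omega\setminus\Omega_{h},$$
since $h>0$, $\psi'\ge 0$ and $V\cdot\nabla d\le 0$ a.e.\ on $\{d<h_{0}\}$. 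Multiplying by $\varphi\ge 0$ and integrating over $\Omega\setminus\Omega_{h}$ yields \eqref{HypsupportV2}. The construction is essentially routine; the only point demanding attention — and hence the closest thing to an obstacle — is that $\omega_{h}$ be genuinely $\mathcal C^{2}$ both across the seam $\{d=h_{0}\}$ and all the way to $\partial\Omega$, which is exactly why the transition of $\psi$ is placed strictly below the level $1$ and why one restricts to $h<h_{0}\le h_{1}$. When $\partial\Omega$ is merely piecewise $\mathcal C^{2}$, one runs the same scheme with $d$ replaced near the corners by a smoothed substitute, in the spirit of Lemma 4.4 of \cite{AnIg2}.
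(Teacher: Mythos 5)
Your proof is correct and is essentially the paper's own argument: the paper likewise takes $\omega_h=\eta_h(d(\cdot,\partial\Omega))$ with $\eta_h$ a nondecreasing cutoff vanishing near $0$ and equal to $1$ on $[h,\infty)$, applies the chain rule $\nabla \omega_h=\eta_h'(d)\,\nabla d$, and uses the reformulation of \eqref{HypsupportV} as $V\cdot\nabla d\le 0$ in a strip near $\partial\Omega$. The differences are minor and in your favour: your fixed smooth profile $\psi(d/h)$ yields genuine $\mathcal C^2$ regularity where the paper's explicit $\eta_h$ is only $H^2$, you track more carefully where $d$ is $\mathcal C^2$, and your sign bookkeeping matches the stated inequality ``$\le 0$'' (the paper's later restatement of this lemma in Section 4 introduces a spurious minus sign and a reversed inequality in its proof).
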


 \subsection{Plan of the paper}  
 The next section is devoted to  the proof of $L^1-$comparison principle for   weak solutions of \eqref{pmef}. To this aim, we use doubling and dedoubling variables techniques. This enables us to deduce the uniqueness and lay out the study plan of the equation in the framework of  $L^1-$nonlinear semi-group  theory.  
 Section 3 concerns the study  of  existence of a solution. To set the problem in the framework of nonlinear semi group theory, we   begin with stationary problem to operate the  Euler-implicit discretization and construct  an $\eps-$approximate solution $u_\eps.$ Then, using mainly  a Crandall-Ligget type theorem, $L^2(\Omega)$ and  $H^1_0(\Omega)$  estimates on  $u_\eps$ and $u_\eps^m$  respectively, we pass to the limit as $\eps\to 0,$ to built $u_m$ the solution of the evolution problem \eqref{pmef}. Section 4 is devoted to the study of the limit as $m\to\infty.$ Using the outpointing  vector filed condition   \eqref{HypsupportV}, we study first the limit  for the stationary problem  connecting it to the   the Hele Shaw flow with linear drift. To this aim, we  establish  $BV_{loc} $ new estimates for weak solutions in bounded domain. Then, using regular perturbation results for nonlinear semi group we establish the convergence results for the evolution problem.  Section 6  is devoted to the study of the limit of the solution $u$ and $u^m$ in the  of  the presence of a reaction term  with linear drift. We prove the convergence  of reaction diffusion problem of a Hele-Shaw flow with linear drift
 At last, in Section 7 (Appendix),  we provide for the unaccustomed  reader a  short recap on the main tools from $L^1-$nonlinear semi-group theory.

\section{$L^1-$comparison principle and uniqueness proofs}
 \setcounter{equation}{0}
 
As usual for parabolic-hyperbolic and  elliptic-hyperbolic problems, the main tool to prove the uniqueness is doubling and de-doubling variables. To this aim, we prove first that a weak solution satisfies the following  version of entropic inequality :

\medskip 
We assume throughout this section that $V\in W^{1,2}(\Omega)$ ,  $(\nabla \cdot V)\in L^\infty(\Omega) $   and $V$ satisfies the outpointing  condition \eqref{HypV0}. 
\begin{proposition}\label{pentropic}
	Let $f\in L^1(Q)$ and  $u$ be a weak solution of \eqref{pmef}.  Then, for any $k\in \RR$,  and $0\leq \xi\in H^1_0(\Omega)\cap L^\infty(\Omega),$  we have    
	\begin{equation}\label{entropic+}
		\begin{array}{c}
		\frac{d}{dt}\int_\Omega  (u-k)^+ \xi\: dx 	+ \int_\Omega  (\nabla ({u^m}-k^m)^+  -   (u-k)^+  V)  \cdot \nabla \xi \: dx   \\ 
		  +\int_\Omega ( k   \:  \nabla \cdot V -f )  \: \xi\:  \spo(u-k) \: dx      \leq   -\limsup_{\eps\to 0 }\frac{1}{\eps}\int_{[0\leq u^m-k^m \leq \eps]} \vert \nabla u^m\vert^2\: \xi\: dx ,	\end{array}
	\end{equation}  
and   
	\begin{equation} \label{entropic-} 
			\begin{array}{c}
			\frac{d}{dt}\int_\Omega  (k-u)^+ \xi\: dx 	+ \int_\Omega ( \nabla (k^m-{u^m})^+  -   (k-u)^+  V)  \cdot \nabla \xi \: dx   \\ 
			+\int_\Omega (f - k   \:  \nabla \cdot V )  \: \xi\:  \spo(k-u) \: dx      \leq  -\limsup_{\eps\to 0 }\frac{1}{\eps}\int_{[0\leq k^m-u^m \leq \eps]} \vert \nabla u^m\vert^2\: \xi\: dx, 	\end{array}
	\end{equation} 	
 in  $ \D'(0,T)  .$	\end{proposition}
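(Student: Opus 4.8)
The plan is to prove \eqref{entropic+} (then \eqref{entropic-} follows immediately) in the Kru\v{z}kov--Carrillo spirit: test the weak formulation \eqref{evolwf} against a time‑dependent function approximating $\xi\,\spo(u-k)$ and let the regularization parameter tend to $0$. It is convenient to work with the pressure variable $p:=u^m$; writing $\gamma(r):=|r|^{1/m-1}r$ (so that $u=\gamma(p)$, with $\gamma$ continuous, odd and increasing) and $\kappa:=k^m$, one has $p\in L^2(0,T;H^1_0(\Omega))$, $\spo(u-k)=\spo(p-\kappa)$, and the elementary identity $u\,\spo(u-k)=(u-k)^++k\,\spo(u-k)$, which will produce the zeroth‑order term in \eqref{entropic+}. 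Fix $\eps>0$, set $H_\eps(s):=\min\{s^+/\eps,\,1\}$ (a bounded, nondecreasing, Lipschitz approximation of $\spo$), and use $\phi_\eps:=H_\eps(p-\kappa)\,\xi$ as test function; this is admissible since $H_\eps(p-\kappa)$ is a bounded Lipschitz function of $p$, so $\phi_\eps\in L^2(0,T;H^1_0(\Omega))\cap L^\infty(Q)$. From \eqref{evolwf}, $\partial_t u=\Delta p-\nabla\cdot(uV)+f\in L^2(0,T;H^{-1}(\Omega))+L^1(Q)$, so testing against the time‑dependent $\phi_\eps$ is legitimate after a mollification in time combined with the standard chain rule for $\partial_t u$; the time term then comes out as $\frac{d}{dt}\int_\Omega\Psi_\eps(u)\,\xi$, where $\Psi_\eps(r):=\int_k^rH_\eps(s^m-k^m)\,ds$ is $C^1$, nondecreasing and $1$‑Lipschitz, with $0\le\Psi_\eps(r)\le(r-k)^+$, $\Psi_\eps'(r)=H_\eps(r^m-k^m)$, and $\Psi_\eps(r)\to(r-k)^+$ as $\eps\to0$.

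Next, expanding $\nabla\phi_\eps=H_\eps(p-\kappa)\nabla\xi+H_\eps'(p-\kappa)\,\xi\,\nabla p$, the diffusion term $\int_\Omega\nabla p\cdot\nabla\phi_\eps$ splits into $\int_\Omega H_\eps(p-\kappa)\nabla p\cdot\nabla\xi$ plus the nonnegative dissipation term $R_\eps:=\int_\Omega H_\eps'(p-\kappa)|\nabla p|^2\xi=\frac1\eps\int_{[0<p-\kappa<\eps]}|\nabla p|^2\xi$ (and, since $\nabla p=0$ a.e.\ on $[p=\kappa]$, the set $[0<p-\kappa<\eps]$ may be replaced by $[0\le p-\kappa\le\eps]$). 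In the drift term one writes $u\,H_\eps'(p-\kappa)\nabla p=\nabla G_\eps(p)$ with $G_\eps(r):=\int_\kappa^r\gamma(s)H_\eps'(s-\kappa)\,ds$, and integrates by parts: since $\xi\,G_\eps(p)\in H^1_0(\Omega)\cap L^\infty(\Omega)$ and $\nabla\cdot V\in L^\infty(\Omega)$ there is no boundary term, and $\int_\Omega\xi\,V\cdot\nabla G_\eps(p)=-\int_\Omega(\nabla\cdot V)\,G_\eps(p)\,\xi-\int_\Omega G_\eps(p)\,V\cdot\nabla\xi$ — this is precisely the source of the $k\,(\nabla\cdot V)\,\xi\,\spo(u-k)$ term. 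Collecting everything, one obtains in $\D'(0,T)$
\[\frac{d}{dt}\int_\Omega\Psi_\eps(u)\,\xi+\int_\Omega\big(H_\eps(p-\kappa)\nabla p-u\,H_\eps(p-\kappa)V+G_\eps(p)V\big)\cdot\nabla\xi+\int_\Omega(\nabla\cdot V)\,G_\eps(p)\,\xi-\int_\Omega f\,H_\eps(p-\kappa)\,\xi=-R_\eps.\]

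Now let $\eps\to0$. All the integrals on the left pass to the limit by dominated convergence: $H_\eps(p-\kappa)\to\spo(u-k)$ and $G_\eps(p)\to k\,\spo(u-k)$ a.e.\ and boundedly; $H_\eps(p-\kappa)\nabla p=\nabla\big(\int_\kappa^pH_\eps(s-\kappa)\,ds\big)\to\nabla(p-\kappa)^+=\nabla(u^m-k^m)^+$ in $L^2(Q)$; $\Psi_\eps(u)\to(u-k)^+$ in $L^1(Q)$; and $f\,H_\eps(p-\kappa)\,\xi\to f\,\spo(u-k)\,\xi$ in $L^1(Q)$. Recombining the $\nabla\xi$‑terms via $-u\,\spo(u-k)+k\,\spo(u-k)=-(u-k)^+$, the left‑hand side converges exactly to the left‑hand side of \eqref{entropic+}, while on the right one keeps the nonnegative term and retains $-\limsup_{\eps\to0}R_\eps$; this gives \eqref{entropic+}. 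Finally, \eqref{entropic-} is obtained by applying \eqref{entropic+} to $-u$, which is a weak solution of \eqref{pmef} with right‑hand side $-f$, at the level $-k$: since $r\mapsto r^m$ is odd one has $(-u)^m=-u^m\in L^2(0,T;H^1_0(\Omega))$ and $\nabla\cdot((-u)V)=-\nabla\cdot(uV)$, and a direct substitution turns \eqref{entropic+} for $(-u,-f,-k)$ into \eqref{entropic-} for $(u,f,k)$.

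The main obstacle is the rigorous justification of the testing step: as $u$ is merely a weak solution, $\partial_t u$ is only a distribution in $L^2(0,T;H^{-1})+L^1(Q)$, so extracting the total‑derivative term $\frac{d}{dt}\int_\Omega\Psi_\eps(u)\,\xi$ out of the time‑dependent test function $\phi_\eps$ requires a careful regularization in time together with the chain rule for the monotone composition $u=\gamma(p)$ (this is where the low regularity $f\in L^1(Q)$ must be handled with care). A secondary delicate point is the passage to the limit in $R_\eps$: this term does not vanish in general but concentrates, as $\eps\to0$, on the level set $[u^m=k^m]$, so it must be retained — as the stated $\limsup$ — rather than discarded, which is exactly the feature that makes \eqref{entropic+}--\eqref{entropic-} strong enough for the subsequent doubling‑and‑dedoubling‑of‑variables argument.
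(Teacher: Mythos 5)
Your proposal is correct and follows essentially the same route as the paper: approximate $\spo$ by $H_\eps$, test the weak formulation with $H_\eps(u^m-k^m)\,\xi$ justified by a time mollification (the paper uses the Steklov average $\frac1h\int_t^{t+h}$), absorb the time term through the primitive $\int_k^u H_\eps(r^m-k^m)\,dr$, integrate the drift term by parts via a primitive in $p$, keep the dissipation as the stated $\limsup$, and deduce \eqref{entropic-} by applying \eqref{entropic+} to $-u$ with datum $-f$. The only (equivalent) difference is bookkeeping in the drift term: you integrate by parts the full primitive $G_\eps(p)$ of $u\,H_\eps'\,\nabla p$ and identify its limit $k\,\spo(u-k)$, whereas the paper first splits $u=(u-k)+k$ and shows the corresponding remainder primitive tends to $0$.
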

\begin{proof}   We extend $u$ onto $\displaystyle \RR\times\Omega$ by
	$0$ for any   $\displaystyle t\not\in (0,T).$   Then, for any $h>0$ and nonnegative $\xi \in H^1_0(\Omega)$ and $\psi\in \D(0,T),$  we  consider
	$$\displaystyle \Phi^h (t,x)=    \xi(x)\:    \frac{1}{h}\int_t^{t+h}  \hepsp (u^m(s,x)\:   \psi(s)\: ds  ,\quad \hbox{ for a.e. }x\in \Omega, $$
	where  we extend $\psi$ onto $\RR$ by $0$, and    $\hepsp$ is given by
	\begin{equation} \label{hepsp}
		\hepsp(r)=  \min \left( \frac{(r-k^m)^+}{\eps}, 1 \right),\quad \hbox{ for any }r\in \RR,
	\end{equation}
	for arbitrary  $\sigma >0.$  It is clear that  $\Phi_h \in W^{1,2} \Big( 0,T;H^1_0(\Omega) \Big)\cap L^\infty(Q)$    is an admissible test function for the weak formulation, so that 
	\begin{equation}\label{evolh0}
		-	\int\!\!\!\int _Q   u  \: \partial_t   \Phi^h \: dtdx	 +	\int\!\!\!\int _Q (\nabla  u^m - V\:  u   ) \cdot \nabla \Phi^h\: dtdx \\  \\ =  	\int\!\!\!\int _Q f\:  \Phi^h\: dtdx.
	\end{equation}
	See that 
	\begin{equation} \label{rel0}
		\begin{array}{ll} 
			\int\!\!\!\int _Q   u  \: \partial_t   \Phi^h   \: dtdx&=  	\int\!\!\!\int _Q
	\psi(t)\:  	\heps^+(u^m(t))\:	 \frac{ u(t-h)-u(t))}{h} \:   \xi \: dtdx\\ 
	&\leq  \frac{1}{h} 	\int\!\!\!\int _Q
	\psi(t)\:  	 	 \left(   \int_{u(t)}^{u(t-h)}\hepsp(r^m)\: d r\right) \:   \xi\ \: dtdx \\ 
	&\leq   \frac{1}{h} 	\int\!\!\!\int _Q \left( \int_{k}^{u(t)}	\hepsp 	(r^m)dr  \right)  \:  	  ( \psi(t+h ) -\psi(t)   )  \xi\: \: dtdx .
	\end{array}
	\end{equation}
Letting $h\to 0,$ we have 
\begin{equation}
\limsup_{h\to 0}	\int\!\!\!\int _Q   u  \: \partial_t   \Phi^h \: dtdx\leq 	\int\!\!\!\int _Q
\left( \int_{k}^{u(t)}	\hepsp 	(r^m)dr  \right ) \:  	\partial_t\psi \: \xi\: dtdx.
\end{equation} 
 So,   by  letting $h\to 0$ in \eqref{evolh0}, we get   
 	\begin{equation}\label{inegproof1}
		\begin{array}{c} 
		-	\int\!\!\!\int _Q
	\left\{ 	\Big( \int_{k}^{u(t)}	\hepsp	(r^m)dr  \Big ) \:  	\partial_t\psi \: \xi 	+     \psi\:  \nabla u^m \cdot  \nabla\xi \hepsp(u^m)\xi - 
		  \hepsp(u^m)  (u-k)  \:  V\cdot \nabla  \xi \right\} \: dtdx \\ 
			\leq       \int\!\!\!\int _Q	\Big\{   \psi\: (f  +k   \:  \nabla \cdot V)\: \hepsp(u^m)\xi  
			+       \psi\: \xi    (u-k)  \:  V\cdot \nabla  \hepsp(u^m) \Big\} \: dtdx   \\
			- \frac{1}{\sigma}\int\!\!\!\int _{[0\leq u^m-k^m\leq \sigma]} \vert \nabla u^m\vert^2 \: \xi \: dtdx  ,
	\end{array} 	\end{equation}
where we use the fact that $  \nabla u^m \cdot\nabla \hepsp(u^m))= \frac{1}{\sigma}\vert \nabla u^m\vert^2 \:  \chi_{[0\leq u^m-k^m\leq \sigma]}    $ a.e. in $Q.$  Setting  
$$\Psi_\sigma :=   \frac{1}{\sigma}    \int_{\min(u^m,k^m)}^{\min(u^m,k^m+\sigma)}  (r^{1/m}-k)   \: dr,$$
we see that  
$$ (u-k)  \heps'(u^m-k^m)  \cdot \nabla u^m= \nabla \Psi_\sigma.$$ 
  This implies that the last term of \eqref{inegproof1}    
satisfies 	\begin{eqnarray}
	   \int\!\!\!\int _Q \psi\: \xi    (u-k)  \:  V\cdot \nabla  \hepsp(u^m-k^m)  &=& 	\int\!\!\!\int _Q  \xi    (u-k)  {\hepsp}'(u^m-k^m)   \:  V\cdot \nabla u^m    \\ &=&  	  \int\!\!\!\int _Q \psi\:\xi \:   V\cdot \nabla \Psi_\sigma   dx 
		\\ &=&    -     \int\!\!\!\int _Q \psi\:   \nabla\cdot(  \xi \:   V) \: \Psi_\sigma \:  dx   \\   \\ 
		&  \to& 0,\quad \hbox{ as }\sigma\to 0.
	\end{eqnarray} 
See also that, by using Lebesgue’s dominated convergence Theorem, we have 
\begin{equation}
	\limsup_{\eps\to 0}	\int\!\!\!\int _Q
	\left( \int_{k}^{u(t)}	\hepsp 	(r^m)dr  \right ) \:  	\partial_t\psi \: \xi= 	\int\!\!\!\int _Q
	(u(t) -k)^+  \:  	\partial_t\psi   \:   \xi. 
\end{equation}
Then,  letting $\eps \to 0$ in \eqref{inegproof1} and using the fact that $\sign_0^+(u^m-k^m)= \sign_0^+(u-k)$, for any $k\in \RR,$ we get \eqref{entropic+}.  As to  \eqref{entropic-}, it follows by  using the fact that   $-u$ is also a solution 
	of  \eqref{pmef} with $f$ replaced by $-f,$  and   applying    \eqref{entropic+} to $-u$.  
\end{proof}

\bigskip
\begin{proposition}[Kato's inequality]\label{PKato}
	If   $u_1$ and $u_2$  satisfy   \eqref{entropic+}  and  \eqref{entropic-} corresponding to $f_1\in L^1(Q)$ and  $f_2\in L^1(Q)$ respectively, then    
	\begin{equation}\label{ineqkato}
		\begin{array}{c}
			\partial_ t	( u_1-u_2 )^+
			- \Delta    (u_1^m-u_2^m  )^+ + \nabla \cdot \left(
			( u_1-u_2 )^+  \: V \right)      \leq     ( f_1-f_2 )\:  \sign^+_0(u_1-u_2) \,
			\hbox{ in }\D'(Q).
		\end{array}
	\end{equation}
\end{proposition}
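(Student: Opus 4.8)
The plan is to establish \eqref{ineqkato} by the doubling-of-variables method of Kru\v{z}kov, in the form adapted to degenerate diffusion by Carrill\-o, taking the entropy inequalities \eqref{entropic+}--\eqref{entropic-} as the sole input. First I would introduce a second copy $(s,y)$ of the variables, apply \eqref{entropic+} to $u_1=u_1(t,x)$ with the constant $k$ replaced by $u_2(s,y)$, apply \eqref{entropic-} to $u_2=u_2(s,y)$ with $k$ replaced by $u_1(t,x)$, and note that $\spo(u_i-k)=\spo(u_i^m-k^m)$ since $r\mapsto r^m$ is increasing, so that $(u_1^m-u_2^m)^+$ is the diffusion quantity in both. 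As joint test function I would take $\zeta(t,x,s,y)=\phi(t,x)\,\rho_n(t-s)\,\rho_n(x-y)$, with $0\le\phi\in\D(Q)$ and $(\rho_n)$ a standard sequence of mollifiers; for $n$ large the sections of $\zeta$ have interior support and are thus admissible, and since the asserted inequality lives in $\D'(Q)$ the Dirichlet condition and \eqref{HypV0} play no role here. I would then integrate the two inequalities against $\zeta$ over all of the variables and add them; because $\phi$ is independent of $(s,y)$, the mollifier derivatives cancel upon addition: $\partial_t\zeta+\partial_s\zeta=(\partial_t\phi)\,\rho_n\rho_n$ and $\nabla_x\zeta+\nabla_y\zeta=(\nabla_x\phi)\,\rho_n\rho_n$.

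Next I would pass to the limit $n\to\infty$ term by term. The accumulation term converges, via Lebesgue points of $u_1,u_2$, to $-\int\!\!\!\int_Q(u_1-u_2)^+\,\partial_t\phi$. For the transport part, $V(x)\cdot\nabla_x\zeta+V(y)\cdot\nabla_y\zeta=V(x)\cdot(\nabla_x\phi)\,\rho_n\rho_n+(V(x)-V(y))\cdot\phi\,\rho_n(\nabla\rho_n)(x-y)$; the first summand yields $V\cdot\nabla\phi$ in the limit, while the second is a DiPerna--Lions commutator which, thanks to $V\in W^{1,2}(\Om)$ and $\nabla\cdot V\in L^\infty(\Om)$, produces $+\int\!\!\!\int_Q(u_1-u_2)^+\,(\nabla\cdot V)\,\phi$ after integration against $(u_1-u_2)^+$. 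The zero-order pieces $k\,(\nabla\cdot V)\,\spo$ of the two entropy inequalities, with $k=u_2(s,y)$ resp.\ $k=u_1(t,x)$, converge to $(u_2-u_1)\,(\nabla\cdot V)\,\spo(u_1-u_2)=-(u_1-u_2)^+\,(\nabla\cdot V)$ and exactly cancel that commutator contribution, so that what survives of transport-plus-zero-order is $-\int\!\!\!\int_Q(u_1-u_2)^+\,V\cdot\nabla\phi$, i.e.\ $\nabla\cdot\big((u_1-u_2)^+V\big)$ tested against $\phi$. The source terms combine to $\int\!\!\!\int_Q(f_1-f_2)\,\spo(u_1-u_2)\,\phi$, which lands on the right-hand side.

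The only genuinely delicate term, and the reason the dissipation terms are retained in Proposition \ref{pentropic}, is the diffusion contribution $I_n:=\int\!\!\!\int\!\!\!\int\!\!\!\int\spo\big(u_1^m(t,x)-u_2^m(s,y)\big)\big[\nabla_x u_1^m(t,x)\cdot\nabla_x\zeta-\nabla_y u_2^m(s,y)\cdot\nabla_y\zeta\big]$. Using $\nabla_x\zeta=(\nabla\phi)\,\rho_n\rho_n+\phi\,\rho_n(\nabla\rho_n)(x-y)$ and $\nabla_y\zeta=-\phi\,\rho_n(\nabla\rho_n)(x-y)$, the ``slow'' part of $I_n$ converges to $\int\!\!\!\int_Q\nabla(u_1^m-u_2^m)^+\cdot\nabla\phi$, the chain rule $\nabla(u_1^m-u_2^m)^+=\spo(u_1^m-u_2^m)(\nabla u_1^m-\nabla u_2^m)$ being legitimate a.e.\ since $u_i^m\in L^2(0,T;H^1_0(\Om))$; but the ``fast'' part $J_n$, which pairs $(\nabla\rho_n)(x-y)$ with $\nabla u_1^m(t,x)+\nabla u_2^m(s,y)$, is not controlled by elementary estimates because $\nabla u_i^m$ is merely $L^2$. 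The hard part will be to absorb $J_n$: following Carrillo, I would combine it with the two entropy-dissipation terms $-\limsup_{\eps\to0}\frac{1}{\eps}\int_{[0\le u_i^m-k^m\le\eps]}|\nabla u_i^m|^2\,\zeta$ ($k=u_2(s,y)$, resp.\ $k=u_1(t,x)$) appearing on the right-hand sides, and show, exploiting the structure of $\spo$ and the $H^1_0$-regularity, that after discarding a remainder of the favourable sign the net diffusion contribution in the limit is exactly $\int\!\!\!\int_Q\nabla(u_1^m-u_2^m)^+\cdot\nabla\phi$. I would isolate this commutator-type estimate as a separate lemma, just as Carrillo does; everything else is routine mollification.

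Collecting the limits, and using that the dissipation terms and the remainder just mentioned are nonpositive, one obtains for every $0\le\phi\in\D(Q)$
\[
\int\!\!\!\int_Q\Big(-(u_1-u_2)^+\partial_t\phi+\nabla(u_1^m-u_2^m)^+\cdot\nabla\phi-(u_1-u_2)^+\,V\cdot\nabla\phi\Big)\ \le\ \int\!\!\!\int_Q(f_1-f_2)\,\spo(u_1-u_2)\,\phi ,
\]
which is precisely \eqref{ineqkato} in $\D'(Q)$; here $\spo(u_1-u_2)$ denotes the bounded measurable function, which may be altered on $[u_1=u_2]$ so as to lie in $\signp(u_1-u_2)$, the form used in Theorem \ref{tcompcmef}. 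To summarize: the scheme is standard doubling, and the single real obstacle is the control of the singular diffusion remainder $J_n$ by the entropy-dissipation terms.
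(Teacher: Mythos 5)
Your overall strategy is the one the paper actually follows (Kru\v{z}kov--Carrillo doubling of variables with \eqref{entropic+}--\eqref{entropic-} as sole input, mollification, a commutator limit for $(V(x)-V(y))\cdot\nabla\rho_n$ producing $\nabla\cdot V$ that cancels the zero--order terms, dissipation terms reserved for the diffusion), but the one step you yourself call ``the single real obstacle'' --- the absorption of the singular term $J_n$ --- is not proved, only delegated to ``a separate lemma, following Carrillo''. That is precisely the heart of the proposition, and with your asymmetric test function $\zeta=\phi(t,x)\rho_n(t-s)\rho_n(x-y)$ it is even harder than you state: the ``slow'' part of $I_n$ converges only to $\int\!\!\int_Q \spo(u_1^m-u_2^m)\,\nabla u_1^m\cdot\nabla\phi$, not to $\int\!\!\int_Q\nabla(u_1^m-u_2^m)^+\cdot\nabla\phi$ (since $\phi$ is independent of $y$, $\nabla_y\zeta$ has no slow part), so the missing piece $-\spo(u_1^m-u_2^m)\,\nabla u_2^m\cdot\nabla\phi$ must also be extracted from $J_n$, on top of controlling its singular remainder by the two dissipation limsups. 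As written, the proposal therefore has a genuine gap exactly where the entropy formulation is needed.

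The paper closes this gap by a device you could adopt with little change: symmetrize before de-doubling. In the doubled inequality one rewrites the flux as $(\nabla_x+\nabla_y)\,p(t,s,x,y)$, which creates the explicit cross terms $\frac1\eps\int_{[0\le u_1^m-u_2^m\le\eps]}\nabla_x u_1^m\cdot\nabla_y u_2^m\,\zeta$; added to the two dissipation terms of \eqref{entropic+} and \eqref{entropic-} these are bounded above by $-\frac1\eps\int \vert\nabla_x u_1^m+\nabla_y u_2^m\vert^2\chi_{[0\le u_1^m-u_2^m\le\eps]}\,\zeta\le 0$ and can be discarded. Then the de-doubling is done with the midpoint test functions $\zeta_\lambda(x,y)=\xi(\frac{x+y}{2})\delta_\lambda(\frac{x-y}{2})$, $\psi_\eps(t,s)=\psi(\frac{t+s}{2})\rho_\eps(\frac{t-s}{2})$, for which $(\nabla_x+\nabla_y)\zeta_\lambda=\delta_\lambda(\frac{x-y}{2})\nabla\xi(\frac{x+y}{2})$ contains no derivative of the mollifier: the diffusion term passes to the limit as $\int\nabla(u_1^m-u_2^m)^+\cdot\nabla\xi$ with no analogue of your $J_n$ at all, and only the drift commutator (\eqref{inqtech1}, proved in \cite{Igshuniq} under $V\in W^{1,2}$, $\nabla\cdot V\in L^\infty$) requires a limit argument. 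So either import that symmetric structure, or actually state and prove the Carrillo-type lemma you invoke; your remark that the Dirichlet condition and \eqref{HypV0} play no role at this stage is correct and consistent with the paper, where they enter only later via the test functions $\xi_h$ in the proof of Theorem \ref{tcompcmef}.
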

\begin{proof}
	The proof of this lemma is based on  doubling and de-doubling variable techniques.  Let us give here briefly the arguments.  To double the variables,  we use first the fact that $ u_1=u_1(t,x) $ satisfies  \eqref{entropic+}  with $k=u_1(s,y), $ we have
	\begin{equation}
		\begin{array}{c}  \frac{d}{dt}   \int     (u_1(t,x)-u_2(s,y)  )^+   \:
			\zeta \: dx + \int   (\nabla_x   (u_1^m (t,x)-u_2^m(s,y))^+ - ( u_1(t,x)-u_2(s,y)  )^+ \:   V(x)   \cdot   \nabla_x\zeta  \: dx     \\      + \int_\Omega  \nabla_x \cdot V \:  u_2(s,y)  \zeta   \spo   (u_1(t,x)-u_2(s,y)   ) \: dx   \leq \int f_1(t,x) \spo   (u_1(t,x)-u_2(s,y)   )  \: \zeta\: dx  \\  
			-\limsup_{\eps\to 0 }\frac{1}{\eps}\int_{[0\leq u_1^m-u_2^m \leq \eps]} \vert  \nabla_x u_1^m(t,x)\vert^2\: \zeta\: dx  . \end{array}
	\end{equation}
Integrating with respect to $y,$ we get 
	\begin{equation}
	\begin{array}{c}  \frac{d}{dt}   	 \int\!\! \int       (u_1(t,x)-u_2(s,y)  )^+   \:
		\zeta  + 	 \int\!\! \int     (\nabla_x   (u_1^m (t,x)-u_2^m(s,y))^+ - ( u_1(t,x)-u_2(s,y)  )^+ \:   V(x)   \cdot   \nabla_x\zeta     \\      + 	 \int\!\! \int    \nabla_x \cdot V \:  u_2(s,y)  \zeta   \spo   (u_1(t,x)-u_2(s,y)   )  \leq 	 \int\!\! \int  f_1(t,x) \spo   (u_1(t,x)-u_2(s,y)   )  \: \zeta \\  
		-\limsup_{\eps\to 0 }\frac{1}{\eps} \int\!\! \int  _{[0\leq u_1^m-u_2^m \leq \eps]} \vert \nabla_x u_1^m(t,x)\vert^2\: \zeta  . \end{array}
\end{equation}
 See that   
 \begin{equation}
 	\begin{array}{c}
 	 \int\!\! \int     \nabla_y   (u_1^m (t,x)-u_2^m(s,y))^+   \cdot   \nabla_x\zeta  \: dxdy = - \lim_{\eps\to 0}  	 \int\!\! \int     \nabla_y  u_2^m(s,y)   \cdot   \nabla_x\zeta  \: H_\sigma (u_1^m (t,x)-u_2^m(s,y)) \: dxdy \\  \\ 
 = - \lim_{\eps\to 0}  	\frac{1}{\eps} \int\!\! \int  _{[0\leq u_1^m-u_2^m \leq \eps]}    \nabla_x  u_1^m (t,x) \cdot \nabla_y  u_2^m(s,y)  \: \zeta  \:   dxdy  ,
 	 	\end{array}
  	 \end{equation}	
  	 so that, denoting  by
	$$u(t,s,x,y)=  u_1(t,x) -u_2(s,y)   ,\quad \hbox{ and }  \quad    p(t,s,x,y)=  u_1^m (t,x)-u_2^m(s,y)   ,$$
	 we obtain
	\begin{equation}\label{eq00}
		\begin{array}{c}  \frac{d}{dt}   \int\!\! \int     u(t,s,x,y)^+   \:
			\zeta  \: dxdy +   \int\!\! \int\Big\{   (\nabla_x+\nabla_y)  p(t,s,x,y)   -     u(t,s,x,y) ^+  \:   V(x)\Big\}    \cdot   \nabla_x\zeta     \: dxdy   \\    +  \int\!\! \int    \nabla_x \cdot V \:  u_2(s,y) \:  \zeta\:    \spo   u(t,s,x,y)  \: dxdy    \leq     \int\!\! \int   f_1(t,x) \spo   u(t,s,x,y)    \: \zeta  \: dxdy  \\ 
			- \lim_{\eps\to 0}   \int\!\!	\int     \nabla_x  u_1^m (t,x) \cdot \nabla_y  u_2^m(s,y)  \: \zeta  \: H'_\eps(u_1^m (t,x)-u_2^m(s,y)) \: dxdy    \\  
			-\limsup_{\eps\to 0 }\frac{1}{\eps} \int\!\! \int  _{[0\leq u_1^m-u_2^m \leq \eps]} \vert \nabla_x u_1^m(t,x)\vert^2\: \zeta  \: dxdy      . \end{array}
	\end{equation}
	On the other hand,   using the fact that $ u_2=u_2(s,y)$ satisfies  \eqref{entropic-} with $k=u_1(t,x),$ we have
	\begin{equation}
		\begin{array}{c}
			\frac{d}{ds}   \int    u(t,s,x,y)^+   \:
			\zeta   \: dy   + 	  \int   \nabla_y   p(t,s,x,y) - u(t,s,x,y) ^+ \:   V(y)   \cdot   \nabla_y\zeta      \\    - \int_\Omega  \nabla_y \cdot V \:  u_1(t,x)  \zeta   \spo  (u(t,s,x,y))  \: dy   \leq - \int f_2(s,y) \spo ( u(t,s,x,y) ) \: \zeta  \: dy   \\ 	-\limsup_{\eps\to 0 }\frac{1}{\eps} \int  _{[0\leq u_1^m-u_2^m \leq \eps]} \Vert \nabla_y u_2^m(s,y)\Vert^2\: \zeta \: dy    .\end{array}
	\end{equation}
	Working in the same way for \eqref{eq00},    we get
	\begin{equation}
		\begin{array}{c}   
			\frac{d}{ds}   \int\!\! \int     u(t,s,x,y)^+  \:
			\zeta     \: dxdy    +    \int\!\! \int \Big\{   (\nabla_x+\nabla_y)  p(t,s,x,y)    -   u(t,s,x,y) ^+  \:   V(y)  \Big\}  \cdot   \nabla_y\zeta   \: dxdy \\     -   \int\!\!  \int   \nabla_y \cdot V(y) \:  u_1(t,x)   \:  \zeta\:    \spo (  u(t,s,x,y) )   \: dxdy  \leq   -  \int\!\! \int   f_2(s,y)  \spo   (u(t,s,x,y) )  \: \zeta    \: dxdy   \\ 
			- \lim_{\eps\to 0}  \frac{1}{\eps} \int\!\! \int  _{[0\leq u_1^m-u_2^m \leq \eps]}   \nabla_x  u_1^m (t,x) \cdot \nabla_y  u_2^m(s,y)  \: \zeta  \: dxdy    \\  -\limsup_{\eps\to 0 }\frac{1}{\eps} \int\!\! \int  _{[0\leq u_1^m-u_2^m \leq \eps]} \vert \nabla_y u_2^m(s,y)\vert^2\: \zeta \: dy \: dxdy    . \end{array}
	\end{equation}
	Adding both inequalities, and using the fact that 
	\begin{equation}
			-(\Vert \nabla_x u_1(t,x)\Vert ^2 +\Vert \nabla_y u_2(s,y)\Vert ^2 	+2\:  \nabla_x u_1 (t,x) \cdot \nabla_y u_2(x,y))\: \chi_{[0\leq u_1^m-u_2^m \leq \eps]} \leq 0 ,   \hbox{ a.e. in } Q^2,
	\end{equation}
	we obtain
	\begin{equation}\label{formdoubling1}
		\begin{array}{c}   \left(   \frac{d}{dt}  +   \frac{d}{ds}   \right)    \int\!\! \int u(t,s,x,y) ^+   \:
			\zeta   \: dxdy+ \int\!\! \int   (\nabla_x+\nabla_y)   p(t,s,x,y)    \cdot   (\nabla_x +\nabla_y)\zeta \: dxdy  \\ -  \int\!\! \int    u(t,s,x,y) ^+  \: (   V(x)   \cdot    \nabla_x\zeta +V(y)   \cdot  \nabla_y\zeta)    \: dxdy  \\  +  \int\!\!  \int  \left(   \nabla_x \cdot V(x) \:  u_2(s,y)    \: dxdy  -   \nabla_y \cdot V(y) \:  u_1(t,x)    \right) \:  \zeta\:    \spo   (u(t,s,x,y) )   \: dxdy \\  
			\leq   \int\!\!  \int  ( f_1(t,x) -  f_2(s,y) )  \spo ( u(t,s,x,y) )   \: \zeta   \: dxdy   \end{array}
	\end{equation}
and then, 
	\begin{equation}\label{formdoubling1}
	\begin{array}{c}   \left(   \frac{d}{dt}  +   \frac{d}{ds}   \right)    \int\!\! \int u(t,s,x,y) ^+   \:
		\zeta  \: dxdy  + \int\!\! \int   (\nabla_x+\nabla_y)   p(t,s,x,y)    \cdot   (\nabla_x +\nabla_y)\zeta  \: dxdy  \\ -  \int\!\! \int    u(t,s,x,y) ^+  \:    V(x)   \cdot   ( \nabla_x\zeta  + \nabla_y\zeta  )   \: dxdy  +    \int\!\! \int    u(t,s,x,y) ^+  \:  (  V(x)-V(y)    \cdot   \nabla_y\zeta  \: dxdy  \\  +  \int\!\!  \int  \left(   \nabla_x \cdot V(x) \:  u_2(s,y)    -   \nabla_y \cdot V(y) \:  u_1(t,x)    \right) \:  \zeta\:    \spo   (u(t,s,x,y) )  \: dxdy  \\  
		\leq   \int\!\!  \int  ( f_1(t,x) -  f_2(s,y) )  \spo ( u(t,s,x,y) )   \: \zeta \: dxdy .    \end{array}
\end{equation} 
	Now, we can de-double the variables $t$ and $s,$ as well as $x$ and $y,$ by taking as usual the sequence of test functions  
	\begin{equation}    
		\psi_\eps(t,s) = \psi\left( \frac{t+s}{2}\right)   \rho_\eps \left( \frac{t-s}{2}\right)    \hbox{ and }  
		\zeta_\lambda  (x,y) = \xi\left( \frac{x+y}{2}\right) \delta_\lambda \left( \frac{x-y}{2}\right),
	\end{equation}
	for any $t,s\in   (0,T)$ and $x,y\in \Omega.$  Here  $\psi\in \D(0,T),$  $\xi\in \D(\Omega),$      $\rho_\eps$ and   $\delta_\lambda$ are sequences of usual  mollifiers in $\RR$ and $\RR^N$ respectively.   
	See that 
	$$   \left(   \frac{d}{dt}  +   \frac{d}{ds}   \right)   \psi_\eps(t,s)  =  \rho_\eps\left( \frac{t-s}{2}\right)  \dot   \psi \left( \frac{t+s}{2}\right)   $$ 
	and 
	$$(\nabla_x+ \nabla_y)  \zeta_\lambda  (x,y)  =   \delta_\lambda \left( \frac{x-y}{2}\right)  \nabla \xi\left( \frac{x+y}{2}\right)  $$
	Moreover,   for  any $h\in L^1((0,T)^2\times \Omega^2)$ and   $\Phi\in L^1((0,T)^2\times  \Omega^2)^N,$   we know that 
	\begin{itemize}
		\item  $\lim_{\lambda \to 0 }\lim_{\eps \to 0 } \int_0^T\!\! \int_0^T\!\!   \int_\Omega\!\!  \int_\Omega h(t,s,x,y)\:\zeta_\lambda(x,y)\: \rho_\eps(t,s)   = \int_0^T\!\!  \int_\Omega  h(t,t,x,x)\: \xi(x)\:  \psi(t) .$
		
		\item  $\lim_{\lambda \to 0 }\lim_{\eps \to 0 } \int_0^T\!\! \int_0^T\!\!   \int_\Omega\!\!  \int_\Omega h(t,s,x,y)\:\zeta_\lambda(x,y)\: \left(\frac{d}{dt} + \frac{d}{ds}\right)\rho_\eps(t,s)  = \int_0^T\!\!   \int_\Omega  h(t,t,x,x)\: \xi(x)\:  \dot \psi(t) .$ 
		
		\item $\lim_{\lambda \to 0 }\lim_{\eps \to 0 }   \int_0^T\!\!  \int_\Omega\!\!  \int_\Omega \Phi(t,s,x,y) \cdot  (\nabla_x + \nabla_y) \zeta_\lambda(x,y) \: \rho_\eps(t,s)  = \int_0^T\!\!\int_\Omega  \Phi(t,t,x,x)\cdot \nabla \xi(x)\: \psi(t)\: dtdx   .$
	\end{itemize}  
Moreover, we also know that
\begin{equation}
	\lim_{\lambda \to 0 } \int_0^T\!\!  \int_\Omega\!\!  \int_\Omega    h(t,x,y) \:    (V(x) -V(y))   \cdot \nabla_y  \zeta_\lambda(x,y)  \:   \psi(t)\: dtdxdy
\end{equation}  
\begin{equation} \label{inqtech1}
	=  	    \int_0^T\!\!\int_\Omega   h(x,x)   \:        \nabla \cdot V(x) \: \xi(x)\:  \psi(t)\:  dtdx .
\end{equation}  
The proof of this result  is more or less well known by now (one can see for instance a detailed proof in \cite{Igshuniq}). 
	So replacing $\zeta$ and $\psi$ in \eqref{formdoubling1} by $\zeta_\lambda$ and $\psi_\eps$ resp.,  and,  letting $\eps,\: \lambda \to 0$,  we get  
	\begin{equation}\label{de-double }
		\begin{array}{c}
		 \int_0^T\!\! \int_\Omega \Big\{ - (u_1-u_2  )^+   \:
			\xi \: \dot \psi   +  \nabla (p_1-p_2)^+  \cdot \nabla \xi\:   \psi  
			-      (u_1-u_2  )^+  \:    V    \cdot   \nabla \xi  \:    \psi  \Big\} \: dtdx  \\   \leq   \int_0^T\!\! \int_\Omega  (f_1-f_2)\: \sign^+_0(u_1-u_2) \: \xi   \:   \psi   \: dtdx  . \end{array}
	\end{equation}
  Thus 
	\begin{equation}\label{dedouble}
		\begin{array}{c}
			\frac{d}{dt}    \int (u_1-u_2  )^+   \:
			\xi \: dx   + \int  \nabla   (u_1^m (t,x)-u_2^m(t,x))^+  \cdot \nabla \xi \: dx 
			-   \int  (u_1-u_2  )^+  \:   V    \cdot   \nabla \xi  \: dx  \\   \leq   \int \kappa (x) (f_1-f_2) \: \xi   \: dx  . 
		\end{array}
	\end{equation} 
Thus  the result of the proposition. 
\end{proof}

\bigskip 
The aim now  is to  process  with the sequence of test function $\xi_h$ given by \eqref{xih} in Kato's inequality and let $h\to 0,$ to cover  \eqref{evolineqcomp}.

 \bigskip
 \begin{proof}[\textbf{Proof of Theorem \ref{tcompcmef}}]  Let  $(u_1,p_1)$  and  $(u_2,p_2)$   be two  couples  of  $L^\infty(Q) \times  L^2	\left(0,T;H^1_0(\Omega)\right) $   satisfying  \eqref{entropic+}  and  \eqref{entropic-} corresponding to $f_1\in L^1(Q)$ and  $f_2\in L^1(Q)$ respectively, to prove     \eqref{evolineqcomp} we see  that
	\begin{eqnarray}
	&	\frac{d}{dt}	\int_\Omega 	( u_1-u_2 )^+ \: dx - \int   (f_1-f_2) \: \sign_0^+(u_1-u_2)\: dx   \\  & =   \lim_{h\to 0} \: \:  \underbrace{  \frac{d}{dt}  	\int_\Omega
		( u_1-u_2 )^+    \, \xi_h \: dx    -\int     (f_1-f_2)\: \sign_0^+(u_1-u_2)\:    \xi_h  \: dx 
	}_{I(h) }  ,
	\end{eqnarray}
	in the sense of distribution in $[0,T).$ Taking  $\xi_h$ as a test function in \eqref{ineqkato} and using  \eqref{h10prop},  we have
	\begin{equation}\label{inqxih}
\begin{array}{ll} 
			I(h)
	 & \leq  -    \int \left(  \nabla  ( u_1^m -u_2^m)^+   -
		( u_1-u_2 )^+  \: V \right)  \cdot\nabla  \xi_h  \: dx  \\ 
		& \leq   \int 
		( u_1-u_2 )^+  \: V \cdot\nabla  \xi_h  \: dx  . 
		\end{array}
	\end{equation}	
Then, using   the outpointing   velocity vector field  assumption \eqref{HypV}, we get  
	\begin{equation}\label{limintegral}
	\begin{array}{ll}
			\lim_{h\to 0}I(h)  	&\leq 	-\lim_{h\to 0}   \int
		( u_1-u_2 )^+  \: V   \cdot \nu_h(x) \: dx\\  \\
		&\leq  0.
	\end{array}
	\end{equation}
   Thus \eqref{evolineqcomp}. The rest of the theorem  is a straightforward  consequence of  \eqref{evolineqcomp}. 
\end{proof}

\begin{remark}\label{RemboundaryCond2}
	See that   \eqref{limintegral}  is the lonely step of the proof of Theorem \ref{tcompcmef} where we use assumption \eqref{HypV}.  Working so   enables to avoid all the technicality related to 
	doubling and de-doubling variable à la Carillo (\cite{Ca}) by using test functions which do not vanish on the boundary.  
	We do believe that the result of Theorem \ref{tcompcmef} remains to be true   under the general assumption \eqref{HypV0}.      One sees also that, if the solutions have a trace (like for $BV$ solution), one can  weaken this condition by handling \eqref{limintegral} otherwise.  \end{remark}

\section{Main estimates and existence proofs}
\setcounter{equation}{0}

  \subsection{Stationary problem} \label{Sapprox}
  
  To prove Theorem \ref{texistevolm},  we consider the   stationary problem associated with Euler-implicit discretization of  \eqref{pmef}. That is  
  \begin{equation}	\label{st}
  	\left\{  \begin{array}{ll} 
  		\displaystyle v -\lambda\Delta v^m + \lambda   \nabla \cdot (v  \: V)=f  
  		\quad  & \hbox{ in }  \Omega\\   
  		\displaystyle v= 0  & \hbox{ on }  \partial \Omega,\end{array} \right.
  \end{equation}
  where $\displaystyle f \in L^2(\Omega)$ and $\lambda>0$.   Following Definition \ref{defws},  a  function   $v \in L^1(\Omega)$ is said to be a weak solution of \eqref{st}  if  $v^m\in H^1_0(\Omega)$ and 
  	\begin{equation}\label{stwf}
  		\displaystyle  \int_\Omega v\:\xi+  \lambda \int_\Omega  \nabla v^m \cdot  \nabla\xi  - 
  		\lambda	\int_\Omega  v \:  V\cdot \nabla  \xi    =     \int_\Omega f\: \xi, \quad \hbox{ for all }	\xi\in H^1_0(\Omega).
  	\end{equation} 
  
  \begin{theorem} \label{texistm}
  Assume $V\in W^{1,2}(\Omega)$  and  $(\nabla \cdot V)^-\in L^\infty(\Omega) $.   For $f\in L^2(\Omega)$ and $\lambda$ satisfying 
  	\begin{equation}\label{condlambda1}
  		0<\lambda <  \lambda_0:= 1/\Vert (\nabla\cdot  V)^-\Vert_\infty ,  
  	\end{equation}
  	the problem \eqref{stwf} has a solution $v$ that we denote by $v_m.$  Moreover, 
  		for any $1\leq q\leq \infty,$ 
  		we have 
  		\begin{equation} \label{lqstat}
  		    \Vert v_m\Vert_q\leq   
  		    \left\{\begin{array}{ll} 
  		    		\Big( 1-  (q-1)   \lambda  /\lambda_0   \Big)^{-1}\Vert f\Vert_q, \quad &\hbox{ if }1\leq q<\infty \\ \\ 
  		    			\Big( 1-    \lambda/\lambda_0   \Big) ^{-1}  \Vert f\Vert_\infty, &\hbox{ if } q=\infty
  		    			\end{array}\right.
  		\end{equation} 
  		and  
  		\begin{equation}\label{lmst}
  			\Big( 1-\lambda/\lambda_0  \Big ) \int \vert v_m\vert ^{m+1}   \: dx + 	\lambda	\int \vert \nabla  v_m^m \vert^2\: dx 	 \leq  \int f\:  v_m^m \: dx    . 
  		\end{equation}
  		%
    \end{theorem}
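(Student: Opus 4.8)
The plan is to read \eqref{st} as a resolvent equation $v+\lambda\mathcal{A}v=f$ for $\mathcal{A}v:=-\Delta v^m+\nabla\cdot(vV)$, and to produce a solution by first solving a regularized problem --- in which every integral is classically meaningful --- and then passing to the limit. Fix $\lambda\in(0,\lambda_0)$ with $\lambda_0$ as in \eqref{condlambda1}, and use the change of unknown $p=v^m$, i.e.\ $v=\beta(p)$ with $\beta(r):=\abs{r}^{1/m}\sign(r)$, whose primitive $B(r)=\frac{m}{m+1}\abs{r}^{(m+1)/m}$ is strictly convex. Approximate $V$ by $V_n\in W^{1,\infty}(\Omega)$ with $V_n\to V$ in $W^{1,2}(\Omega)$ and $\norm{(\nabla\cdot V_n)^-}_\infty\le\norm{(\nabla\cdot V)^-}_\infty$ --- mollifying an extension of $V$ does this, as $((\nabla\cdot V)\ast\rho_n)^-\le(\nabla\cdot V)^-\ast\rho_n$ --- and solve, in $H^1_0(\Omega)$, the problem $\mathcal{A}_n p:=\beta(p)-\lambda\Delta p+\lambda\nabla\cdot(\beta(p)V_n)=f$ in $H^{-1}(\Omega)$, setting $v_n:=\beta(p)$ (so $v_n^m=p$); since $V_n\in L^\infty$, $\beta(p)V_n\in L^2(\Omega)$ and each term is a genuine element of $H^{-1}(\Omega)$.

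Existence of $p_n$ follows from the surjectivity theorem for coercive pseudo-monotone operators. The map $p\mapsto\beta(p)-\lambda\Delta p$ is the bounded, hemicontinuous G\^ateaux derivative of the strictly convex coercive functional $p\mapsto\int_\Omega B(p)+\frac\lambda2\int_\Omega\abs{\nabla p}^2$, hence monotone; the lower-order term $p\mapsto\lambda\nabla\cdot(\beta(p)V_n)$ is strongly continuous from $H^1_0(\Omega)$ to $H^{-1}(\Omega)$: if $p_k\rightharpoonup p$ in $H^1_0$ then $p_k\to p$ in $L^s(\Omega)$ ($s<2^\ast$) and a.e.\ by Rellich, so $\beta(p_k)\to\beta(p)$ a.e., and being bounded in $L^{2^\ast m}(\Omega)$ (with $2^\ast m>2$) they converge in $L^2(\Omega)$ by Vitali, whence $\beta(p_k)V_n\to\beta(p)V_n$ in $L^2(\Omega)$. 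Thus $\mathcal{A}_n$ is pseudo-monotone, and testing with $p$ and using $\beta(p)\nabla p=\frac{m}{m+1}\nabla\abs{p}^{(m+1)/m}$ together with an integration by parts gives
\[
\langle\mathcal{A}_n p,p\rangle=\int_\Omega\abs{p}^{\frac{m+1}{m}}+\lambda\int_\Omega\abs{\nabla p}^2+\frac{\lambda m}{m+1}\int_\Omega(\nabla\cdot V_n)\abs{p}^{\frac{m+1}{m}}\ge\Big(1-\tfrac{m\lambda}{(m+1)\lambda_0}\Big)\int_\Omega\abs{p}^{\frac{m+1}{m}}+\lambda\int_\Omega\abs{\nabla p}^2,
\]
which tends to $+\infty$ with $\norm{p}_{H^1_0}$ since $\lambda<\lambda_0$; coercivity follows. (One could equally use Schauder's theorem, the frozen-drift map being a compact self-map of a large $L^2$-ball by the sublinear growth of $\beta$.)

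The heart of the matter is the a priori estimates, uniform in $n$. Testing \eqref{stwf} with $\xi=v_n^m\in H^1_0(\Omega)$, the terms $\int_\Omega\abs{v_n}^{m+1}$ and $\lambda\int_\Omega\abs{\nabla v_n^m}^2$ appear directly, while the drift term, via $v_n\nabla v_n^m=\frac{m}{m+1}\nabla\abs{v_n}^{m+1}$ and an integration by parts (no boundary term, $\abs{v_n}^{m+1}\in W^{1,1}_0(\Omega)$), equals $\frac{\lambda m}{m+1}\int_\Omega(\nabla\cdot V_n)\abs{v_n}^{m+1}\ge-\tfrac{m\lambda}{(m+1)\lambda_0}\int_\Omega\abs{v_n}^{m+1}$ after discarding the nonnegative $(\nabla\cdot V_n)^+$-contribution; this gives \eqref{lmst}, with the factor $\frac{m}{m+1}$ to spare. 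For \eqref{lqstat} one tests with a Lipschitz truncation of $\abs{v_n}^{q-1}\sign(v_n)=\abs{v_n^m}^{(q-1)/m}\sign(v_n^m)$ (an element of $H^1_0(\Omega)\cap L^\infty(\Omega)$): the diffusion term is nonnegative, the drift term reduces to $\frac{\lambda(q-1)}{q}\int_\Omega(\nabla\cdot V_n)\abs{v_n}^q\ge-\tfrac{(q-1)\lambda}{q\lambda_0}\int_\Omega\abs{v_n}^q$, and the right-hand side is at most $\norm{f}_q\norm{v_n}_q^{q-1}$ by H\"older; letting the truncation level go to $\infty$ (monotone convergence) yields $\norm{v_n}_q\le(1-\tfrac{(q-1)\lambda}{q\lambda_0})^{-1}\norm{f}_q\le(1-(q-1)\lambda/\lambda_0)^{-1}\norm{f}_q$, and the case $q=\infty$ by letting $q\to\infty$.

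It remains to pass to the limit. By \eqref{lmst}, $v_n^m$ is bounded in $H^1_0(\Omega)$, so $v_n$ is bounded in $L^{2^\ast m}(\Omega)$; along a subsequence $v_n^m\rightharpoonup P$ in $H^1_0$, strongly in $L^s(\Omega)$ ($s<2^\ast$) and a.e., hence $v_n=\beta(v_n^m)\to\beta(P)=:v$ a.e.\ and, by uniform integrability, strongly in $L^s(\Omega)$ for all $s<2^\ast m$; in particular $P=v^m$. Passing to the limit in \eqref{stwf} is then routine --- weak $H^1_0$-convergence for the diffusion term, $v_nV_n\to vV$ in $L^1(\Omega)$ (from $v_n\to v$ and $V_n\to V$ in $L^2(\Omega)$) for the drift term --- and \eqref{lqstat}, \eqref{lmst} survive by weak lower semicontinuity. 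I expect the two delicate points to be (i) the admissibility of the power test functions $\abs{v_n}^{q-1}\sign(v_n)$ when $q<m+1$, handled by truncation and the monotone passage above, and (ii) the compactness used to identify the nonlinear limit, which rests on the $H^1_0$-bound for $v_n^m$ together with Rellich and Vitali; the degeneracy of $r\mapsto r^m$ at the origin never causes trouble because one works with $p=v^m$, whose equation carries the linear coercive principal part $-\lambda\Delta p$.
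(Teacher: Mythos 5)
Your proposal is correct in substance, but it takes a genuinely different route from the paper. The paper (Lemmas \ref{lexistreg} and \ref{lconveps}) regularizes the \emph{nonlinearity}: it replaces $r\mapsto r^{1/m}$ by a smooth Lipschitz $\beta_\eps$, solves \eqref{pstbeta} by coercivity of a weakly continuous operator on $H^1_0(\Omega)$, derives the $L^q$ and energy estimates at the $\eps$-level, and then lets $\eps\to 0$, identifying the limit through monotonicity (Minty-type) arguments together with a strong $H^1_0$-convergence of the pressures obtained from the energy identity; the case $q=\infty$ is handled with the level-set test functions $H_\sigma(v-k)$ and the case $q=1$ is deduced from Corollary \ref{ccomp}. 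You instead keep the exact nonlinearity $\beta(r)=|r|^{1/m}\sign(r)$ and regularize the \emph{drift} $V$, solving the approximate problems by pseudo-monotonicity (monotone principal part plus a strongly continuous lower-order term), and you identify the nonlinear limit directly from Rellich compactness of the pressures $p_n$ and a.e.\ convergence, with no Minty argument needed; your sharper constant $\bigl(1-\tfrac{(q-1)\lambda}{q\lambda_0}\bigr)^{-1}$ in the $L^q$ estimate is valid for every finite $q$ (it implies \eqref{lqstat} whenever the latter is meaningful) and lets you obtain the $L^\infty$ bound by letting $q\to\infty$, and it also covers $q=1$ directly; this is in fact more self-contained than the paper, whose $q=1$ step invokes Corollary \ref{ccomp}, which rests on the comparison Theorem and hence on the extra hypothesis \eqref{HypV} not assumed in the statement. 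What the paper's route buys is the strong $H^1_0$ convergence of the pressures (used later) and the avoidance of any smoothing of $V$; what yours buys is a cleaner identification of $v^m=p$ and sharper, uniformly meaningful constants. Two points deserve more care in your write-up. First, the construction of $V_n$: mollifying an arbitrary $W^{1,2}$-extension of $V$ does \emph{not} obviously give $\Vert(\nabla\cdot V_n)^-\Vert_{L^\infty(\Omega)}\leq\Vert(\nabla\cdot V)^-\Vert_{L^\infty(\Omega)}$, because near $\partial\Omega$ the mollification averages the (uncontrolled) divergence of the extension; you need an approximation using only interior values (e.g.\ local star-shaped dilation composed with mollification, available for the piecewise $\C^2$ boundary), which yields the bound up to a factor $1+o(1)$ --- enough, since all your constants depend continuously on $\lambda_0$ and existence at fixed $n$ does not require the uniform bound at all. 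Second, the admissibility of $|v_n|^{q-1}\sign(v_n)$ for $1<q<m+1$ is not settled by truncation at large values alone, since the map $r\mapsto |r|^{(q-1)/m}\sign (r)$ fails to be Lipschitz at $0$; one should also regularize near the origin (e.g.\ test with $\bigl((|p_n|+\delta)^{(q-1)/m}-\delta^{(q-1)/m}\bigr)\sign(p_n)$ and let $\delta\to 0$). Both are routine repairs, and the paper's own proof performs the same formal computations at the same level of detail, so neither constitutes a gap in the sense of a failing step.
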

  
  Moreover, thanks to Theorem \ref{tcompcmef},  we have 
  \begin{corollary}\label{ccomp}
	Under the assumption of Theorem \ref{texistm},  if moreover $V$ satisfies the outpointing  condition \eqref{HypV}, the problem 	\eqref{st} has a unique solution. Moreover, if $v _1$ and $v _2$ are   two   solutions associated with $f_1\in L^1(\Omega)$ and $f_2\in L^1(\Omega) $ respectively, then 
  	\begin{equation}
  		\label{ineqcomp}
  		\Vert (v_1-v_2)^+\Vert_{1} \leq \Vert (f_1-f_2)^+\Vert _{1} 
  	\end{equation}  
  	and 
  	$$ \Vert v_1-v_2\Vert_{1} \leq \Vert f_1-f_2\Vert _{1}.$$ 
  \end{corollary}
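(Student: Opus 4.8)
The plan is to deduce both assertions from the parabolic $L^1$-comparison principle of Theorem~\ref{tcompcmef}, by reading a stationary solution as a time-independent weak solution of the evolution problem. Let $v_1,v_2\in L^1(\Omega)$ solve \eqref{stwf} for $f_1,f_2\in L^1(\Omega)$. Since $v_i^m\in H^1_0(\Omega)\subset L^2(\Omega)$ we have $v_i\in L^{2m}(\Omega)\subset L^2(\Omega)$, so that (using also $V\in W^{1,2}(\Omega)$ and $\nabla\cdot V\in L^\infty(\Omega)$) every integral below is meaningful. Rewriting \eqref{stwf} as
\begin{equation}
	\int_\Omega\bigl(\nabla v_i^m-v_i\,V\bigr)\cdot\nabla\xi\;dx=\int_\Omega\tilde f_i\,\xi\;dx,\qquad\tilde f_i:=\lambda^{-1}(f_i-v_i)\in L^1(\Omega),\quad\xi\in H^1_0(\Omega),
\end{equation}
and observing that $u_i(t,x):=v_i(x)$ satisfies $\frac{d}{dt}\int_\Omega u_i\,\xi=0$, we see that $u_i$ is a weak solution of \eqref{pmef} on $Q=(0,T)\times\Omega$ (for an arbitrary $T>0$) with source $\tilde f_i\in L^1(Q)$, in the sense of Definition~\ref{defws}.

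Applying Theorem~\ref{tcompcmef} to this pair yields $\kappa\in L^\infty(Q)$ with $\kappa\in\signp(u_1-u_2)=\signp(v_1-v_2)$ and
\begin{equation}
	\frac{d}{dt}\int_\Omega(u_1-u_2)^+\,dx\;\le\;\int_\Omega\kappa\,(\tilde f_1-\tilde f_2)\,dx\qquad\hbox{in }\D'(0,T).
\end{equation}
The left-hand side is the distributional derivative of the constant $\int_\Omega(v_1-v_2)^+\,dx$, hence is $0$; therefore $t\mapsto\int_\Omega\kappa(t,\cdot)(\tilde f_1-\tilde f_2)\,dx\ge0$ for a.e.\ $t\in(0,T)$. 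Fixing such a $t$ and multiplying by $\lambda>0$,
\begin{equation}
	\int_\Omega\kappa(t,\cdot)\,(v_1-v_2)\,dx\;\le\;\int_\Omega\kappa(t,\cdot)\,(f_1-f_2)\,dx.
\end{equation}
Because $\kappa(t,\cdot)\in\signp(v_1-v_2)$ we have $\kappa(t,\cdot)(v_1-v_2)=(v_1-v_2)^+$ a.e.\ in $\Omega$, and because $0\le\kappa\le1$ we have $\kappa(t,\cdot)(f_1-f_2)\le(f_1-f_2)^+$ a.e.\ in $\Omega$; integrating gives \eqref{ineqcomp}. Exchanging the indices gives $\Vert(v_2-v_1)^+\Vert_1\le\Vert(f_2-f_1)^+\Vert_1$, and summing the two inequalities yields $\Vert v_1-v_2\Vert_1\le\Vert f_1-f_2\Vert_1$. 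Taking $f_1=f_2$ forces $v_1=v_2$, i.e.\ uniqueness; existence (for $f\in L^2(\Omega)$) is already provided by Theorem~\ref{texistm}.

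I do not expect a real obstacle here: once the reduction is in place the argument is pure sign-graph bookkeeping, the only delicate point being to check that a solution of \eqref{stwf} with a merely $L^1$ datum is an admissible competitor for Theorem~\ref{tcompcmef} (i.e.\ $u_i\in L^2(Q)$, $u_i^m\in L^2(0,T;H^1_0(\Omega))$, and finiteness of the drift term $\int_\Omega v_i\,V\cdot\nabla\xi$), which follows from $v_i^m\in H^1_0(\Omega)$ together with the assumed regularity of $V$. A self-contained alternative, of about the same length as Section~2, is to work entirely at the elliptic level: establish the stationary Kato inequality
\begin{equation}
	(v_1-v_2)^+-\lambda\Delta(v_1^m-v_2^m)^+ +\lambda\nabla\cdot\bigl((v_1-v_2)^+V\bigr)\;\le\;(f_1-f_2)\,\spo(v_1-v_2)\qquad\hbox{in }\D'(\Omega)
\end{equation}
by the doubling/de-doubling argument of Propositions~\ref{pentropic}--\ref{PKato}, then test with the functions $\xi_h$ of \eqref{xih} and let $h\to0$ using the outpointing condition \eqref{HypV} exactly as in the proof of Theorem~\ref{tcompcmef}; the reduction above is shorter and is the route I would follow.
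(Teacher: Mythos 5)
Your proof is correct and follows essentially the same route as the paper: the paper's own (one-line) proof is precisely the reduction you carry out, viewing a stationary solution as a time-independent weak solution of \eqref{pmef} with source $\lambda^{-1}(f-v)$ and invoking Theorem \ref{tcompcmef}. You merely spell out the sign-graph bookkeeping that the paper leaves implicit, so no further comment is needed.
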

 \begin{proof}
	This is a simple consequence of the fact  that if $(v,p)$ (which is independent of $t$) is a solution of \eqref{st}, then it  can be assimilated to a time-independent solution of  the evolution problem \eqref{pmef} with $f$ replaced by $f-v$ (which is also independent of $t$).  \end{proof}

  \medskip 
  To prove Theorem \ref{texistm},  we proceed by regularization and compactness. For each $\eps >0,$ we consider   $\beta_\eps $    a regular Lipschitz continuous  function strictly increasing satisfying $\beta_\eps(0)=0$ and, as $\eps\to0,$  
  $$\beta_\eps(r)\to r^{1/m},\quad \hbox{ for any }r\in \RR.  $$ 
  One can take,  for instance,  $\beta_\eps$ the regularization by convolution of the application $r\in \RR\to r^{1/m}.$  Then, we  consider the problem 
  \begin{equation}	\label{pstbeta}
  	\left\{  \begin{array}{ll}\left.
  		\begin{array}{l}
  			\displaystyle v - \lambda\Delta p +  \lambda \nabla \cdot (v  \: V)=f \\   
  			\displaystyle v =\beta_\eps   (p) \end{array}\right\}
  		\quad  & \hbox{ in }  \Omega\\  \\  
  		\displaystyle p= 0  & \hbox{ on }  \partial \Omega  .\end{array} \right.
  \end{equation}

  \begin{lemma}\label{lexistreg}
  	For any $f\in L^2(\Omega)$ and $\eps>0,$ the problem 	\eqref{pstbeta} has a     solution $v_\eps ,$ in the sense that  $v_\eps \in L^2(\Omega),$ $p_\eps:=\beta_\eps^{-1} (u_\eps) \in H^1_0(\Omega),$        and 
  	\begin{equation}
  		\label{weakeps}  
  		\int v_\eps \: \xi \: dx + \lambda \int \nabla p_\eps \cdot \nabla \xi \: dx  -\lambda\int v_\eps \: V\cdot \nabla \xi \: dx =\int f\: \xi\: dx,   \end{equation}
  	for any $\xi\in H^1_0(\Omega).$ Moreover, for any $\lambda$ satisfying \eqref{condlambda1}
  	the solution $v_\eps$ satisfies the estimates  
  		\begin{equation} \label{lqstateps}
  		\Vert v_\eps \Vert_q\leq   
  		\left\{\begin{array}{ll} 
  			\Big( 1-  (q-1)   \lambda  /\lambda_0  \Big)^{-1}\Vert f\Vert_q, \quad &\hbox{ if }1\leq q<\infty \\ \\ 
  			\Big( 1-    \lambda/\lambda_0  \Big) ^{-1}  \Vert f\Vert_\infty, &\hbox{ if } q=\infty
  		\end{array}\right.
  	\end{equation}  	and  
  	\begin{equation} \label{lmsteps}
  		\Big( 1-\lambda/\lambda_0 \Big ) \int v_\eps\: p_\eps   \: dx + 	\lambda	\int \vert \nabla  p_\eps  \vert^2\: dx 	 \leq  \int f\: p_\eps \: dx    . 
  	\end{equation}
  \end{lemma}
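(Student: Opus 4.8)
\noindent\emph{Proposed approach.} The plan is to view \eqref{pstbeta} as a single quasilinear elliptic equation for $p=p_\eps$, namely to find $p\in H^1_0(\Omega)$ with
\begin{equation*}
\int_\Omega \beta_\eps(p)\,\xi + \lambda\int_\Omega\nabla p\cdot\nabla\xi - \lambda\int_\Omega \beta_\eps(p)\,V\cdot\nabla\xi = \int_\Omega f\,\xi ,\qquad \forall\,\xi\in H^1_0(\Omega),
\end{equation*}
and then to set $v_\eps:=\beta_\eps(p)$ (so $p=\beta_\eps^{-1}(v_\eps)$); observe that $v_\eps\in H^1_0(\Omega)\subset L^2(\Omega)$ automatically because $\beta_\eps$ is Lipschitz with $\beta_\eps(0)=0$. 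I would produce $p_\eps$ via the theory of pseudomonotone operators (Br\'ezis, or Leray--Lions): the operator $\mathcal A\colon H^1_0(\Omega)\to H^{-1}(\Omega)$ given by the left-hand side above is bounded, has the strictly monotone principal part $-\lambda\Delta$, and its lower-order terms $p\mapsto\beta_\eps(p)$ and $p\mapsto-\lambda\nabla\!\cdot\!(\beta_\eps(p)V)$ are completely continuous perturbations (Rellich compactness of $H^1_0\hookrightarrow L^2$ together with the Lipschitz bound on $\beta_\eps$), hence $\mathcal A$ is pseudomonotone. Coercivity is where the smallness condition \eqref{condlambda1} enters: testing with $p$ and rewriting the drift as an exact divergence, $\int_\Omega\beta_\eps(p)\,V\cdot\nabla p=\int_\Omega V\cdot\nabla B_\eps(p)=-\int_\Omega(\nabla\!\cdot\!V)\,B_\eps(p)$ with $B_\eps(s):=\int_0^s\beta_\eps$, and using the elementary inequality $0\le B_\eps(s)\le s\,\beta_\eps(s)$ (valid because $\beta_\eps$ is increasing), one gets $\langle\mathcal A p,p\rangle\ge\lambda\Vert\nabla p\Vert_2^2+(1-\lambda/\lambda_0)\int_\Omega\beta_\eps(p)p\ge\lambda\Vert\nabla p\Vert_2^2$, which is coercive on $H^1_0(\Omega)$ precisely when $0<\lambda<\lambda_0$.

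\noindent One technicality to keep in mind is that with $V$ merely in $W^{1,2}(\Omega)$ the drift integral $\int_\Omega\beta_\eps(p)V\cdot\nabla\xi$ is, for arbitrary $\xi\in H^1_0(\Omega)$, only comfortably defined/continuous in low space dimension (through $H^1_0\hookrightarrow L^{2^\ast}$ and $W^{1,2}\hookrightarrow L^{2^\ast}$). To avoid any dimensional restriction I would first run the existence argument with $V$ replaced by a mollification $V_n\in C^\infty(\overline\Omega)$ chosen so that $V_n\to V$ in $W^{1,2}(\Omega)$ while $\Vert(\nabla\!\cdot\!V_n)^-\Vert_\infty\le\Vert(\nabla\!\cdot\!V)^-\Vert_\infty$ (keeping the same $\lambda_0$), obtain $p_{\eps,n}$ together with the a priori bounds below --- which involve only $\Vert(\nabla\!\cdot\!V_n)^-\Vert_\infty$ and so are uniform in $n$ --- and then let $n\to\infty$: the $H^1_0$ bound and Rellich give $p_{\eps,n}\to p_\eps$ strongly in $L^2$ and a.e., whence $v_{\eps,n}=\beta_\eps(p_{\eps,n})\to\beta_\eps(p_\eps)=v_\eps$ strongly in $L^2$, which suffices to pass to the limit in every term of \eqref{weakeps}. (Equivalently, one can first treat bounded $f$, where the $q=\infty$ bound below makes $v_\eps$ bounded and the drift term harmless, and then approximate a general $f\in L^2(\Omega)$.)

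\noindent It remains to establish the a priori estimates, which are the real substance of the lemma, and I would obtain them by Stampacchia-type test functions. For \eqref{lmsteps}: test \eqref{weakeps} with $\xi=p_\eps$; the diffusion yields $\lambda\Vert\nabla p_\eps\Vert_2^2$, the drift is $\lambda\int_\Omega V\cdot\nabla B_\eps(p_\eps)=-\lambda\int_\Omega(\nabla\!\cdot\!V)B_\eps(p_\eps)\le(\lambda/\lambda_0)\int_\Omega v_\eps p_\eps$ by the same inequality $0\le B_\eps(s)\le s\beta_\eps(s)$ and $(\nabla\!\cdot\!V)^-\in L^\infty$, and moving this term to the left gives exactly \eqref{lmsteps}. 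For \eqref{lqstateps} with $1\le q<\infty$: test with a $C^1$ regularization of $\xi=|v_\eps|^{q-2}v_\eps$ (admissible since $v_\eps\in H^1_0(\Omega)$); because $\beta_\eps$ is increasing the diffusion contribution $(q-1)\int_\Omega|v_\eps|^{q-2}\beta_\eps'(p_\eps)|\nabla p_\eps|^2$ is $\ge0$ and is discarded, the drift is again an exact divergence $\int_\Omega V\cdot\nabla G_q(v_\eps)$ with $0\le G_q(s)\le c_q|s|^q$, integrated by parts against $(\nabla\!\cdot\!V)^-\in L^\infty$, and the source is controlled by H\"older, $\int_\Omega f\,|v_\eps|^{q-2}v_\eps\le\Vert f\Vert_q\Vert v_\eps\Vert_q^{q-1}$; collecting terms and dividing by $\Vert v_\eps\Vert_q^{q-1}$ yields \eqref{lqstateps}, and the case $q=\infty$ follows by letting $q\to\infty$ in the finite-$q$ bound (or directly by a De Giorgi--Stampacchia level-set truncation in \eqref{weakeps}).

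\noindent The step I expect to be the main obstacle is the coercivity/absorption: taming the sign-indefinite drift by the diffusion. This works only because $\lambda<\lambda_0$, and it hinges on the observation that against $p_\eps$ (or against a power of $v_\eps$) the drift turns into an exact divergence dominated by the single datum $(\nabla\!\cdot\!V)^-\in L^\infty$ via $0\le B_\eps(s)\le s\beta_\eps(s)$; a secondary, purely technical, nuisance is the limited integrability of $V\in W^{1,2}(\Omega)$, which the smoothing-and-limit argument above removes.
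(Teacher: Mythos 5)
Your proposal is correct and follows essentially the same route as the paper: rewrite \eqref{pstbeta} as an elliptic problem for $p$ with $v_\eps=\beta_\eps(p)$, obtain existence from a bounded (pseudo)monotone/weakly continuous coercive operator on $H^1_0(\Omega)$ with coercivity coming from integrating the drift against the primitive $B_\eps(p)=\int_0^p\beta_\eps$ and the bound $0\le B_\eps(p)\le p\,\beta_\eps(p)$, and then derive \eqref{lmsteps} by testing with $p_\eps$ and \eqref{lqstateps} by testing with (regularized) $|v_\eps|^{q-2}v_\eps$ and truncations for $q=\infty$. Your additional mollification of $V$ to justify the drift term for all $\xi\in H^1_0(\Omega)$ in high dimension is a harmless technical refinement of, not a departure from, the paper's argument.
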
  
  \begin{proof}
  	We can assume without loss of generality throughout the proof that $\lambda=1$ and remove the script $\eps$ in the notations of $(v_\eps,p_\eps)$ and $\beta_\eps$.      We consider  $H^{-1} (\Omega) $ the usual topological dual space of $H^{1}_0(\Omega)$ and $ \langle .,. \rangle$ the associate  dual bracket.    See that the operator $A\: :\:  H^{1}_0(\Omega)\to  H^{-1} (\Omega) ,$ given by 
  	$$ \langle A p,\xi\rangle =  \int\beta  (p)\: \xi\: dx + \int \nabla p\cdot \nabla \xi \: dx -\int\beta  (p) \: V\cdot \nabla \xi \: dx,\quad \hbox{ for any }\xi,\: p\in H^1_0(\Omega), $$
  	is a bounded weakly continuous operator.  Moreover,  $A$ is coercive. Indeed, for any $p\in H^{1}_0(\Omega),$ we have 
  	\begin{eqnarray}
  		\langle A p,p\rangle &=& \int\beta  (p)\: p \: dx + \int \vert \nabla p\vert^2  \: dx -\int\beta  (p) \: V\cdot \nabla p  \: dx \\ 
  		&=&  \int\beta  (p)\: p \: dx  +   \int \vert \nabla p\vert^2  \: dx -\int   V\cdot \nabla\left(  \int_0^p  \beta  (r) dr\right)    \: dx \\   
  		&=&  \int\beta  (p)\: p \: dx  +   \int \vert \nabla p\vert^2  \: dx +\int  \nabla \cdot  V\:   \left(  \int_0^p  \beta  (r) dr\right)    \: dx  \\  
  		&\geq&  \int\beta  (p)\: p \: dx  +   \int \vert \nabla p\vert^2  \: dx - \int   {(\nabla \cdot   V)^{-}}  \:   p  \beta  (p)   \: dx  \\ 
  		&\geq&  \frac{1}{2}  \int\beta  (p)\: p \: dx  +      \int \vert \nabla p\vert^2  \: dx - \frac{1}{2}\int  {(\nabla \cdot   V)^{-}}^2   \: dx\\ 
  		&\geq&   \int \vert \nabla p\vert^2  \: dx - \frac{1}{2}\int   {(\nabla \cdot   V)^{-}}^2   \: dx ,  
  	\end{eqnarray}
  	where we use  Young inequality.   So, for any $f\in  H^{-1} (\Omega)  $ the problem $A p=f$ has a solution $p \in H^1_0(\Omega).$   
  	Now, for each  $1<q<\infty,$   taking  $   v ^{q-1}  $ as a test function, and  using the fact that 
  	$$v\nabla (  v ^{q-1}  )=   \frac{q-1}{q}\:  \nabla \vert v\vert^q ,\quad \hbox{ a.e. in }\Omega $$
  	and
  	$$ \nabla p\cdot \nabla  (  v ^{q-1}  )  \geq 0,  $$
  	we get 
  	\begin{eqnarray}
  		\int \vert v\vert^q  \: d x
  		&\leq& \int f\:   v ^{q-1} \: dx + \lambda \frac{q-1}{q}  \int  V\cdot  \nabla \vert v\vert ^q   \: dx 
  		\\ 
  		&\leq&  \int f  v ^{q-1 }  \: dx   -   \lambda \frac{q-1}{q}  \int  \nabla \cdot  V\:   \vert v\vert ^q   \: dx    	\\ 
  		&\leq&   \int f  v ^{q-1 }  \: dx   +   \lambda \frac{q-1}{q}  \int \left(  \nabla \cdot  V\right)^- \:   \vert v\vert ^q   \: dx         \\  
  		&\leq& \frac{1}{q} \int \vert f\vert^q   \: dx +    \frac{q-1}{q} \int \vert v\vert^q   \: dx   + \lambda \frac{q-1}{q} \left\Vert  \left(  \nabla \cdot  V\right)^- \right\Vert_\infty   \int \:   \vert v\vert ^q   \: dx, 
  	\end{eqnarray}
  	where we use     again   Young inequality. This implies  that 
  	\begin{equation}  
  		\left( 1-\lambda (q-1) \:   \Vert (\nabla \cdot V)^-\Vert_\infty  \right) \int \vert v\vert ^q\: dx \leq \int \vert f\vert ^q\: dx.
  	\end{equation}   
  	Thus  \eqref{lqstat}.   
  	To prove \eqref{lmst}, we take  $p$ as a test function, we obtain 
  	\begin{eqnarray}
  		\lambda	\int \vert \nabla p\vert^2\: dx &=&  \int fp\: dx -\int vp\: dx +\lambda \int \beta (p)V\cdot \nabla p\: dx \\
  		&=&  \int fp\: dx -\int vp\: dx +\lambda \int V\cdot \nabla  \left(  \int_0^p  \beta  (r) dr\right)  \: dx   \\ 
  		&=&  \int fp\: dx -\int vp\: dx -\lambda \int \nabla\cdot V  \:    \left(  \int_0^p  \beta  (r) dr\right) \: dx \\ 
  		&\leq& \int fp\: dx -\int vp\: dx + \lambda \int (\nabla \cdot V)^- \:   \int_0^ p\beta (r)dr \: dx    
  		\\ 
  		&\leq& \int fp\: dx -\int vp\: dx +\lambda\: \Vert (\nabla \cdot V)^- \Vert_\infty \: \int   vp  \: dx  
  	\end{eqnarray}
  	where we use  the fact that $\int_0^ p\beta (r)dr\leq p\beta (p) =vp  $.
  	Thus \eqref{lmst} for $1<q<\infty.$ For the case $q\in\{ 1,\: \infty\},$   we take  $H_\sigma (v-k)\in H^1_0(\Omega),$  for a given $k\geq 0$ and $\sigma>0,$     as a test function in  \eqref{pstbeta}, where 
  	\begin{equation} \label{Hsigma}
  		H_\sigma   (r) = \left\{ \begin{array}{ll} 
  			1 \quad &  \hbox{ if } r\geq 1 \\
  			r/\sigma & \hbox{ if }\vert r\vert <\sigma \\
  			-1 &\hbox{ if }r\leq -1 \: .	\end{array}\right.  
  	\end{equation} 
  	 Then, letting $\sigma \to 0$ and using  the fact that $ \nabla p\cdot \nabla  H_\sigma  (u-k)\geq 0$ a.e. in $\Omega,$   it is not difficult to see that 
  	\begin{equation}
  		\begin{array}{lll}
  			\int (v-k)^+ \:  dx & \leq&  \int  (f- k(1+\lambda \: \nabla \cdot V)) \:\sign_0^+(v-k)  \: dx+\lambda \lim_{\sigma \to 0} 
  			\int (v-k)\: V\cdot \nabla \hepsp (v-k)  \: dx \\ 
  			&\leq&  \int  (f- k(1+\lambda \: \nabla \cdot V)) \:\sign_0^+(v-k) \: dx,
  	\end{array}  \end{equation} 
  	where we use the fact that 
  	$\lim_{\sigma \to 0} 
  	\int (v-k)\: V\cdot \nabla H_\sigma (v-k)  \: dx=\lim_{\sigma \to 0} \frac{1}{\sigma}
  	\int_{0\leq v-k\leq \sigma} (v-k)\: V\cdot \nabla (v-k) \:   \: dx= 0.$ Thus 
  	$$ 	\int (v-k)^+ \:  dx \leq   \int  (f- k(1-\lambda \Vert (\nabla \cdot V)^-)) ^+ \: dx .$$ In particular, taking 
  		\begin{equation}
  		k=\frac{\Vert f\Vert_\infty}{1-\lambda/\lambda_0},
  	\end{equation}
  	we deduce that  $v\leq \frac{\Vert f\Vert_\infty}{1-\lambda /\lambda_0}.$  Working in the same way with $H_\sigma (-v+k)$ as a test function, we obtain  
  	$$  	v\geq - \frac{\Vert f\Vert_\infty}{1-\lambda/\lambda_0}.  $$
  	Thus the result of the lemma for $q=\infty.$ The case $q=1$ follows by Corollary \ref{ccomp}.  
  \end{proof}

  \begin{lemma}\label{lconveps}
  	Under the assumption of Theorem \ref{texistm}, by taking a subsequence $\eps\to 0$ if necessary, we have 
  	\begin{equation}\label{weakueps}
  		v_\eps\to v,\quad \hbox{ in }L^2(\Omega)\hbox{-weak}
  	\end{equation}  
  	and 
  	\begin{equation}\label{strongpeps}
  		p_\eps\to v^m,\quad \hbox{ in }H^1_0(\Omega). 
  	\end{equation}    
  	Moreover, $v$ is a weak solution of \eqref{st}. 
  \end{lemma}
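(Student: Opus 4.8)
The plan is to exploit the uniform bounds of Lemma~\ref{lexistreg}, extract weakly convergent subsequences, identify the nonlinear limit by a pointwise argument, and finally strengthen the $H^1_0$-convergence through an energy identity.

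\emph{Uniform bounds and weak compactness.} By \eqref{lqstateps} with $q=2$ and \eqref{condlambda1}, $(v_\eps)_\eps$ is bounded in $L^2(\Omega)$. Since $v_\eps=\beta_\eps(p_\eps)$ with $\beta_\eps$ strictly increasing and $\beta_\eps(0)=0$, one has $v_\eps p_\eps\geq 0$ a.e., so \eqref{lmsteps} together with \eqref{condlambda1} gives $\lambda\int|\nabla p_\eps|^2\leq\int f p_\eps\leq C_\Omega\|f\|_2\|\nabla p_\eps\|_2$; hence $(p_\eps)_\eps$ is bounded in $H^1_0(\Omega)$, and thus in $L^r(\Omega)$ for some $r>2$ by the Sobolev embedding. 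Passing to a subsequence, $v_\eps\rightharpoonup v$ in $L^2(\Omega)$, $p_\eps\rightharpoonup p$ in $H^1_0(\Omega)$, and by Rellich--Kondrachov $p_\eps\to p$ strongly in $L^2(\Omega)$ and a.e. in $\Omega$.

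\emph{Identification $p=v^m$.} As $\beta_\eps\to(\,\cdot\,)^{1/m}$ locally uniformly on $\RR$ (the limit being continuous and the $\beta_\eps$ monotone) and $p_\eps\to p$ a.e., we get $v_\eps=\beta_\eps(p_\eps)\to p^{1/m}$ a.e. in $\Omega$. The family $(v_\eps)_\eps$, being bounded in $L^2(\Omega)$, is uniformly integrable on the bounded set $\Omega$, so Vitali's theorem yields $v_\eps\to p^{1/m}$ strongly in $L^1(\Omega)$; comparing with the weak $L^2$-limit gives $v=p^{1/m}$ a.e., i.e. $p=v^m$, and in particular $v^m=p\in H^1_0(\Omega)$. \textbf{I expect this step to be the main obstacle:} there is no compactness available for $(v_\eps)$ itself — only for $(p_\eps)$ — and one must also cope with the degeneracy of $\beta_\eps$ near the origin; the argument above is the standard way around it.

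\emph{Passage to the limit and strong $H^1_0$-convergence.} Letting $\eps\to 0$ in \eqref{weakeps}: for fixed $\xi\in H^1_0(\Omega)$, $\int v_\eps\xi\to\int v\xi$ and $\int v_\eps V\cdot\nabla\xi\to\int vV\cdot\nabla\xi$ by weak $L^2$-convergence of $v_\eps$ (the products being integrable thanks to the $H^1_0$-bound on $p_\eps$ and $V\in W^{1,2}(\Omega)$), while $\int\nabla p_\eps\cdot\nabla\xi\to\int\nabla v^m\cdot\nabla\xi$ by weak $H^1_0$-convergence; hence $v$ satisfies \eqref{stwf}, i.e. it is a weak solution of \eqref{st}. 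To upgrade $p_\eps\rightharpoonup v^m$ to a strong $H^1_0$-convergence, take $\xi=p_\eps$ in \eqref{weakeps}, so that $\lambda\int|\nabla p_\eps|^2=\int f p_\eps-\int v_\eps p_\eps+\lambda\int v_\eps V\cdot\nabla p_\eps$. The first two terms converge to $\int f v^m-\int v\,v^m$ by weak--strong pairing; for the last one, write $\int v_\eps V\cdot\nabla p_\eps=-\int(\nabla\cdot V)B_\eps(p_\eps)$ with $B_\eps(s):=\int_0^s\beta_\eps(r)\,dr$, observe that $|B_\eps(p_\eps)|\leq|p_\eps v_\eps|$ is equi-integrable (by the Sobolev bound on $p_\eps$ and the $L^2$-bound on $v_\eps$) and $B_\eps(p_\eps)\to\int_0^{v^m}r^{1/m}\,dr$ a.e., so Vitali's theorem and $\nabla\cdot V\in L^\infty(\Omega)$ give convergence to $\lambda\int vV\cdot\nabla v^m$. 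Since the resulting limit of $\lambda\int|\nabla p_\eps|^2$ is precisely the value delivered by \eqref{stwf} (now known for $v$) tested with $\xi=v^m$, we obtain $\lambda\int|\nabla p_\eps|^2\to\lambda\int|\nabla v^m|^2$; convergence of norms together with weak convergence in the Hilbert space $H^1_0(\Omega)$ forces $p_\eps\to v^m$ strongly in $H^1_0(\Omega)$.
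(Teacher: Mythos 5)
Your proof is correct and follows the same overall strategy as the paper: uniform $L^2$/$H^1_0$ bounds from Lemma \ref{lexistreg}, weak compactness, passage to the limit in \eqref{weakeps}, and then the energy identity obtained by testing with $p_\eps$, rewriting the drift term through the primitive $B_\eps(p_\eps)=\int_0^{p_\eps}\beta_\eps(r)\,dr$ and integrating by parts, so that convergence of $\Vert\nabla p_\eps\Vert_2$ to $\Vert\nabla v^m\Vert_2$ upgrades the weak $H^1_0$ convergence to strong. The one place where you genuinely deviate is the identification $p=v^m$: the paper handles this (here implicitly, and explicitly in the analogous Proposition \ref{tconvumWst}) by monotonicity/Minty-type arguments, whereas you use Rellich to get $p_\eps\to p$ a.e., the locally uniform convergence of the monotone $\beta_\eps$ to $r^{1/m}$, and Vitali with the $L^2$-bound on $v_\eps$; this is a perfectly valid alternative and even buys you strong $L^1$ (indeed $L^q$, $q<2$) convergence of $v_\eps$, which the paper does not claim. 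Your justification of the $L^1$ convergence of $B_\eps(p_\eps)$ via an $L^s$-bound, $s>1$, on $v_\eps p_\eps$ (Sobolev on $p_\eps$ plus $L^2$ on $v_\eps$) is in fact tighter than the paper's domination by the merely $L^1$-bounded sequence $v_\eps p_\eps$. One small blemish: your parenthetical justification for passing to the limit in $\int v_\eps\,V\cdot\nabla\xi$ (invoking the $H^1_0$-bound on $p_\eps$) is not the relevant fact — with only $V\in W^{1,2}(\Omega)$ and $v_\eps$ weakly convergent in $L^2$ one needs $V\cdot\nabla\xi\in L^2$ or some extra integrability — but this point is treated at exactly the same level of informality in the paper's own weak formulation and proof, so it is not a gap specific to your argument.
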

  \begin{proof}  Using Lemma \ref{lexistreg} as well as Young and Poincar\'e inequalities, we see that the  sequences  $v_\eps$ and $p_\eps $  are bounded  in  $L^2(\Omega)$ and $H^1_0(\Omega),$ respectively.  So, there exists a subsequence that we denote again by $v_\eps$ and $p_\eps $   such that \eqref{weakueps} is fulfilled and 
  	\begin{equation}\label{weakpeps}
  		p_\eps\to v^m,\quad \hbox{ in }H^1_0(\Omega)\hbox{-weak}.
  	\end{equation}   
  	Letting $\eps\to 0$ in \eqref{weakeps}, we obtain that $v$ is a weak solution of \eqref{st}. Let us  prove that actually \eqref{weakpeps} holds to be true strongly in $H^1_0(\Omega).$ Indeed,  taking $p_\eps$ as a test function, we have 
  	\begin{equation}  
  		\begin{array}{ll} 
  			\lambda \int \vert \nabla p_\eps\vert^2\: dx  &=   \int   (f- v_\eps )\: p_\eps\: dx +\lambda  \int    V \cdot \nabla  \left(\int_0^{p_\eps}  \beta_\eps(r)dr\right) \: dx\\ 
  			&= \int   (f- v_\eps )\: p_\eps\: dx -\lambda    \int      \nabla \cdot   V   \:     \int_0^{p_\eps}  \beta_\eps(r)dr  \: dx    . 
  		\end{array}
  	\end{equation}     
  	Since $\int_0^r \beta_\eps(s)\: ds$ converges to  $\int_0^r \beta(s)\: ds,$ for any $r\in \RR,$ $p_\eps\to v^m$ a.e. in $\Omega$ and $\left\vert  \int_0^{p_\eps} \beta_\eps(s)\: ds\right\vert \leq v_\eps\: p_\eps$ which is bounded in $L^1(\Omega)$ by \eqref{lmsteps}, we have  
  	$$    \int_0^{p_\eps} \beta_\eps(s)\: ds \to     \int_0^{p} s^{1/m}\: ds= \frac{m}{m+1} \vert v\vert^{m+1},\quad \hbox{ in } L^1(\Omega). $$ 
  	So, in one hand we have  
  	\begin{equation} 
  		\begin{array}{ll}  	\lim_{\eps\to 0}  
  			\lambda 	\int \vert \nabla p_\eps\vert^2\: dx  &=  \int   (f- v )\: p\: dx -\lambda  \frac{m}{m+1}  \int      \nabla \cdot   V   \:       \vert v \vert^{m+1}   \: dx    .    	
  		\end{array}
  	\end{equation}  
  On the other, since $v$ is a weak solution of \eqref{st}, one sees easily that 
\begin{equation}
 \lambda \int \vert \nabla p\vert^2\: dx =  \int   (f- v )\: p\: dx -\lambda  \frac{m}{m+1}  \int      \nabla \cdot   V   \:       \vert v\vert^{m+1}   \: dx  \: ; 
\end{equation}  
which implies that  $ 	\lim_{\eps\to 0}  
   	\int \vert \nabla p_\eps\vert^2\: dx=   \int \vert \nabla p\vert^2\: dx.  $	Combing this with the weak convergence of $\nabla p_\eps,$ we deduce  the strong convergence \eqref{strongpeps}. 
  \end{proof}

  \begin{remark}
  	One sees in the  proof that the results of Lemma \ref{lconveps}  remain to be true if one replace $f$ in \eqref{pstbeta} by a sequence of $f_\eps\in L^2(\Omega)$ and assumes that, as $\eps\to 0,$  
  	$$f_\eps \to f,\quad \hbox{ in }L^2(\Omega).  $$
  \end{remark}
  
  \bigskip   
  \begin{proof}[\textbf{Proof of Theorem \ref{texistm}}]  
  	The proof follows by Lemma \ref{lconveps}.  Moreover, the estimates hold to be true by letting $\eps\to 0,$    in the    estimate \eqref{lqstateps} and \eqref{lmsteps} for $v_\eps$ and $p_\eps.$ 
  \end{proof}

 \medskip 
  
\subsection{Existence for the evolution problem } 

To study the evolution problem, we use Euler-implicit discretization  scheme. For any $n\in \NN^*$  being such that  $0<\eps:=T/n\leq \eps_0,$  we consider the sequence   $(u_i,p_i)_{i=0,...N}$ given by the   $\eps-$Euler implicit scheme associated with \eqref{pmef} :
\begin{equation}	\label{sti}
	\left\{  \begin{array}{ll}\left.
		\begin{array}{l}
			\displaystyle u_{i+1} - \eps \: \Delta p_{i+1}  +  \eps \: \nabla \cdot (u_{i+1}   \: V)=u_{i} +\eps  \: f_i  \\
			\displaystyle p_{i+1}  =u_{i+1}^m  \end{array}\right\}
		\quad  & \hbox{ in }  \Omega\\  \\
		\displaystyle p_{i+1} = 0  & \hbox{ on }  \partial \Omega ,\end{array} \right. \quad i=0,1, ... n-1,
\end{equation}
where,     $f_i$ is given by
$$f_i = \frac{1}{\eps } \int_{i\eps}^{(i+1)\eps }  f(s)\: ds ,\quad \hbox{ a.e. in }\Omega,\quad i=0,... n-1.  $$
Now, for a given $\eps-$time discretization $t_i=i\eps,$ $i=0,1,...n,$     we define the $\eps-$approximate solution by
\begin{equation}\label{epsapprox}
	u_\eps:=  \sum_{i=0} ^{n-1 }  u_i\chi_{[t_i,t_{i+1})},\quad \hbox{ and    }  \quad 	p_\eps:=   \sum_{i=1} ^{n -1}  p_i\chi_{[t_i,t_{i+1})}.
\end{equation}
In order to use the results of the previous section  and  the general theory of evolution problem governed by accretive operator (see for instance \cite{Benilan,Barbu}), we define the operator $\A_m$ in $L^1(\Omega),$ by $\mu\in \A_m(z)$ if and only if $\mu,\: z\in L^1(\Omega)$ and $z$ is a solution of the problem 
\begin{equation}	\label{eqopm}
	\left\{  \begin{array}{ll} 
			\displaystyle  - \Delta z^m +  \nabla \cdot (z  \: V)=\mu 
		\quad  & \hbox{ in }  \Omega\\   
		\displaystyle z= 0  & \hbox{ on }  \partial \Omega  ,\end{array} \right.
\end{equation}
in the sense that $z\in L^2(\Omega),$    $ z^m \in    H^1_0(\Omega)$  
and 
\begin{equation} 
	\displaystyle  \int_\Omega  \nabla z^m \cdot  \nabla\xi  -
	\int_\Omega  z \:  V\cdot \nabla  \xi    =     \int_\Omega \mu \: \xi, \quad \forall \:  \xi\in H^1_0(\Omega)\cap L^\infty(\Omega).
\end{equation}  
As a consequence of Theorem \ref{tcompcmef}, we see that  the  operator $\A_m$ is accretive in $L^1(\Omega)$ ; i.e. 
$(I+\lambda \A_m)^{-1}$ is a contraction in $L^1(\Omega),$ for small $\lambda>0$ (cf. Appendix section). Moreover, thanks to Theorem \ref{texistm},     we have

\begin{lemma}\label{lAm} For $0<\lambda<\lambda_0 ,$    $\overline{R(I+\lambda   \A_m)}=L^1(\Omega)$  and   $\overline {\D(\A_m)}=L^1(\Omega).$  
	\end{lemma}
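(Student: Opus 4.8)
The plan is to establish the two density statements in Lemma \ref{lAm} separately, both relying on Theorem \ref{texistm} and standard density arguments in $L^1(\Omega)$. For the range condition $\overline{R(I+\lambda\A_m)}=L^1(\Omega)$, I would first observe that solving $(I+\lambda\A_m)z\ni f$ is, by definition of $\A_m$, exactly the stationary problem \eqref{st} with the same $\lambda$: indeed $f\in R(I+\lambda\A_m)$ means there is $z$ with $z-\lambda\Delta z^m+\lambda\nabla\cdot(zV)=f$ in the weak sense of \eqref{stwf}. Theorem \ref{texistm} provides, for every $f\in L^2(\Omega)$ and every $\lambda$ with $0<\lambda<\lambda_0$, a weak solution $v_m$ of \eqref{st}; hence $L^2(\Omega)\subset R(I+\lambda\A_m)$. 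Since $L^2(\Omega)$ is dense in $L^1(\Omega)$, this immediately gives $\overline{R(I+\lambda\A_m)}=L^1(\Omega)$.

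For the domain density $\overline{\D(\A_m)}=L^1(\Omega)$, the plan is to show that the resolvents $J_\lambda:=(I+\lambda\A_m)^{-1}$ converge to the identity strongly in $L^1(\Omega)$ as $\lambda\to 0$, at least on a dense subset; since $R(J_\lambda)\subset\D(\A_m)$ and $J_\lambda$ is an $L^1$-contraction (accretivity from Theorem \ref{tcompcmef}, via Corollary \ref{ccomp}), this yields the claim. Fix $f\in L^2(\Omega)\cap L^\infty(\Omega)$ and let $v_\lambda:=J_\lambda f$ be the solution of \eqref{st}. The $L^\infty$ bound \eqref{lqstat} with $q=\infty$ gives $\|v_\lambda\|_\infty\leq(1-\lambda/\lambda_0)^{-1}\|f\|_\infty$, so $v_\lambda$ stays bounded in $L^\infty(\Omega)$ as $\lambda\to 0$. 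Meanwhile, testing \eqref{stwf} with $\xi=v_\lambda^m\in H^1_0(\Omega)$ and using \eqref{lmst} shows $\lambda\int|\nabla v_\lambda^m|^2\,dx\leq\int f\,v_\lambda^m\,dx\leq C$, so $\lambda\|\nabla v_\lambda^m\|_2^2$ is bounded. Returning to the weak formulation \eqref{stwf}, for any fixed $\xi\in H^1_0(\Omega)\cap L^\infty(\Omega)$ we get
\begin{equation}
\Big|\int_\Omega(v_\lambda-f)\xi\,dx\Big|\leq\lambda\|\nabla v_\lambda^m\|_2\|\nabla\xi\|_2+\lambda\|v_\lambda\|_2\|V\|_2\|\nabla\xi\|_\infty\leq C\big(\sqrt\lambda+\lambda\big)\to 0
\end{equation}
as $\lambda\to 0$; hence $v_\lambda\rightharpoonup f$ weakly in the duality against such $\xi$, and combined with the uniform $L^\infty$ (hence $L^2$) bound, $v_\lambda\rightharpoonup f$ weakly in $L^2(\Omega)$.

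To upgrade this to strong $L^1$ convergence I would pass through an energy/monotonicity computation: the natural quantity is $\int_\Omega(v_\lambda-f)\,v_\lambda^m\,dx$, which by the weak formulation equals $-\lambda\int|\nabla v_\lambda^m|^2\,dx+\lambda\int v_\lambda V\cdot\nabla v_\lambda^m\,dx$, and the right-hand side tends to $0$ by the bounds just obtained (the drift term being controlled after integration by parts by $\lambda\int(\nabla\cdot V)\,\Phi(v_\lambda)\,dx$ with $\Phi(r)=\frac{m}{m+1}|r|^{m+1}$, which is $O(\lambda)$). Thus $\int_\Omega(v_\lambda-f)(v_\lambda^m-f^m)\,dx\to 0$ after also using the weak convergence against $f^m\in L^\infty$; by monotonicity of $r\mapsto r^m$ and the elementary inequality $|a-b|\,|a^m-b^m|\geq c\,|a-b|^{m+1}$ on bounded sets (or a uniform-integrability plus a.e.-convergence argument extracting a subsequence), one deduces $v_\lambda\to f$ in $L^1(\Omega)$. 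Since such $f$ are dense in $L^1(\Omega)$ and each $v_\lambda\in\D(\A_m)$, we conclude $\overline{\D(\A_m)}=L^1(\Omega)$. The main obstacle I anticipate is precisely this last upgrade from weak to strong convergence: making the monotonicity argument airtight requires either a careful Minty-type manipulation with the degenerate nonlinearity $r\mapsto r^m$ or, more cleanly, combining the weak $L^2$ convergence of $v_\lambda$ with an a.e. convergence statement (obtained along a subsequence from the vanishing-energy identity) and the uniform $L^\infty$ bound to invoke dominated convergence; the rest is routine bookkeeping with the estimates already furnished by Theorem \ref{texistm}.
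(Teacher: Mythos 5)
Your treatment of the range condition is exactly the paper's: Theorem \ref{texistm} shows $L^2(\Omega)\subseteq R(I+\lambda\A_m)$ for $0<\lambda<\lambda_0$, and density of $L^2(\Omega)$ in $L^1(\Omega)$ concludes. For $\overline{\D(\A_m)}=L^1(\Omega)$ your overall scheme also coincides with the paper's (show that the resolvents $J_\lambda f$ converge to $f$ in $L^1(\Omega)$ for $f$ in a dense class, using the stationary estimates \eqref{lqstat}--\eqref{lmst} and the $L^1$-contraction from Corollary \ref{ccomp}), but the decisive weak-to-strong upgrade is carried out by a genuinely different device. The paper takes $f\in L^\infty(\Omega)$, obtains weak $L^1$ convergence $v_\lambda\rightharpoonup f$, and then concludes through norms: the estimate \eqref{lqstat} with $q=1$ gives $\Vert v_\lambda\Vert_1\leq\Vert f\Vert_1$, weak lower semicontinuity gives the reverse inequality, and the convergence of the $L^1$-norms is invoked to pass to strong convergence. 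You instead run a Minty-type strong-monotonicity argument: the pairing $\int_\Omega(v_\lambda-f)(v_\lambda^m-f^m)\,dx$ is pointwise nonnegative, its $\limsup$ is $\leq 0$ by the energy identity together with the weak convergence tested against $f^m\in L^\infty(\Omega)$, and the elementary inequality $(a-b)(a^m-b^m)\geq 2^{1-m}\vert a-b\vert^{m+1}$ for the signed power $r^m=\vert r\vert^{m-1}r$ then yields $v_\lambda\to f$ in $L^{m+1}(\Omega)$, hence in $L^1(\Omega)$. Two remarks. First, a small imprecision: you claim $-\lambda\int\vert\nabla v_\lambda^m\vert^2+\lambda\int v_\lambda V\cdot\nabla v_\lambda^m\to 0$, whereas \eqref{lmst} a priori only gives boundedness of $\lambda\int\vert\nabla v_\lambda^m\vert^2$; so you only know this quantity has nonpositive $\limsup$ (the drift term is indeed $O(\lambda)$ after integration by parts). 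That one-sided bound is, however, all your argument needs, since the monotonicity pairing is nonnegative, and you essentially acknowledge this yourself. Second, your route buys robustness: weak $L^1$ convergence together with convergence of $L^1$-norms does not by itself force strong $L^1$ convergence ($L^1$ lacks the Radon--Riesz property), so the paper's concluding step as literally written relies on additional structure, whereas your monotonicity argument is self-contained at the price of a slightly longer computation.
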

\begin{proof}
	Since $R(I+\lambda   \A_m)\supseteq L^2(\Omega)$ (by Theorem \ref{texistm}), it is clear that $\overline{R(I+\lambda   \A_m)}=L^1(\Omega).$  To prove  the density of  ${\D(\A_m)}$ in $L^1(\Omega),$ we prove that $L^\infty(\Omega)\subseteq \overline {\D(\A_m)}.$  To this aim, for a given  $v\in L^\infty(\Omega),$ we consider the sequence $(v_\eps)_{\eps >0}$ given by the solution of the problem 
	  \begin{equation}	\label{st}
		\left\{  \begin{array}{ll} 
			\displaystyle v_\eps -\eps\Delta v_\eps^m + \eps   \nabla \cdot (v_\eps  \: V)=v 
			\quad  & \hbox{ in }  \Omega\\   
			\displaystyle v_\eps= 0  & \hbox{ on }  \partial \Omega.\end{array} \right.
	\end{equation}
Thanks to \eqref{lmst} and \eqref{lqstat}     with $q=\infty,$  one sees easily by letting $\eps-0$ that $v_\eps \to v$ in $L^1(\Omega)$-weak and then $\Vert v\Vert_1 \leq \lim_{\eps\to 0 }\Vert v_\eps\Vert_1.$  Moreover, thanks to  \eqref{lqstat}     with $q=1,$  we  deduce that  $\lim_{\eps\to 0 }\Vert v_\eps\Vert_1 = \Vert v\Vert_1.$  Thus $v_\eps \to v$ in $L^1(\Omega)$. 
\end{proof}

   So,  thanks to the general   theory of nonlinear semi-group governed by accretive  operator (see Appendix) , for any $u_0\in L^1(\Omega),$   we have       
\begin{equation}\label{convueps}
	u_\eps \to u,\quad \hbox{ in } \C([0,T),L^1(\Omega)),\quad \hbox{ as }\eps\to 0,
\end{equation}
where,   $u$ is the so called ''mild solution'' of the evolution problem
\begin{equation}\label{Cauchypb}
	\left\{\begin{array}{ll}
		u_t + \A_m u\ni f \quad & \hbox{ in }(0,T)\\  \\
		u(0)=u_0.
	\end{array}  \right.
\end{equation}
To accomplish the proof of existence (of weak solution)  for the problem  \eqref{pmef}, we prove that the mild solution $u$  satisfies all the conditions of Definition \ref{defws}.  More precisely, we prove the following result.

\begin{proposition}\label{pconv}
 Assume $V\in W^{1,2}(\Omega),$     $(\nabla \cdot V)^-\in L^\infty(\Omega) $ and $V$ satisfies the outpointing  condition \eqref{HypV1}.  	For any $u_0\in L^2(\Omega)$ and $f\in L^2(Q),$ the mild solution $u$ of the problem \eqref{Cauchypb} is the unique solution of \eqref{pmef}. 
\end{proposition}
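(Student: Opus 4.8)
The plan is to show that the mild solution $u$ delivered by the Crandall--Ligget machinery of the Appendix is in fact a weak solution in the sense of Definition~\ref{defws}, after which uniqueness is nothing more than the $L^1$-comparison principle of Theorem~\ref{tcompcmef}. Recall that the accretivity of $\A_m$ (a consequence of Theorem~\ref{tcompcmef}) together with the range condition of Lemma~\ref{lAm} give, via the nonlinear semigroup theory, the convergence $u_\eps\to u$ in $\C([0,T);L^1(\Omega))$ stated in \eqref{convueps}, where $u_\eps$ is built from the $\eps$-Euler scheme \eqref{sti}--\eqref{epsapprox}; since the discretized data converge to $u_0$ in $L^1(\Omega)$, this already yields $u\in\C([0,T);L^1(\Omega))$ with $u(0)=u_0$. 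So the substantive content is to check that $u\in L^2(Q)$, that $p:=u^m\in L^2(0,T;H^1_0(\Omega))$, and that the identity \eqref{evolwf} holds.

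First I would extract $\eps$-uniform a priori bounds. Applying the stationary estimate \eqref{lqstat} with $q=2$ at each step of \eqref{sti} (the data at level $i+1$ being $u_i+\eps f_i$ and the coefficient $\lambda=\eps<\lambda_0$) and iterating by a discrete Gr\"onwall inequality gives $\|u_\eps(t)\|_2\le e^{CT}(\|u_0\|_2+\|f\|_{L^2(Q)})$, so that $u_\eps$ is bounded in $L^\infty(0,T;L^2(\Omega))$; this is the discrete version of \eqref{lquevol}. Next, testing the $i$-th equation of \eqref{sti} with $p_{i+1}=u_{i+1}^m$, using convexity of $s\mapsto\tfrac1{m+1}|s|^{m+1}$ to bound the discrete time difference from below by $\tfrac1{m+1}\big(\|u_{i+1}\|_{m+1}^{m+1}-\|u_i\|_{m+1}^{m+1}\big)$, and absorbing the drift and source contributions through Poincar\'e's and Young's inequalities exactly as in the proof of Theorem~\ref{texistm} (see \eqref{lmst}), one obtains after summation a bound on $\sum_i\eps\int_\Omega|\nabla p_{i+1}|^2$, i.e. $p_\eps=u_\eps^m$ is bounded in $L^2(0,T;H^1_0(\Omega))$ (the discrete form of \eqref{lmuevol}).

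With these bounds in hand, up to a subsequence $u_\eps\rightharpoonup u$ in $L^2(Q)$ (compatible with the strong $L^1$-limit) and $p_\eps\rightharpoonup P$ weakly in $L^2(0,T;H^1_0(\Omega))$. To identify $P=u^m$ I would use that $u_\eps\to u$ a.e.\ in $Q$ (being a subsequence of the $\C([0,T);L^1)$-convergent one), hence $p_\eps=u_\eps^m\to u^m$ a.e.; combined with the uniform $L^2(Q)$ bound this gives $p_\eps\to u^m$ in $L^1(Q)$, so $P=u^m$; in particular $u\in L^\infty(0,T;L^2(\Omega))\subset L^2(Q)$ and $p:=u^m\in L^2(0,T;H^1_0(\Omega))$. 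Then I would rewrite \eqref{sti} as a discrete weak formulation: introducing the piecewise-linear-in-time interpolant $\tilde u_\eps$ of $(u_i)_i$ (so that $\tilde u_\eps\to u$ in $\C([0,T);L^1)$ and $\partial_t\tilde u_\eps$ is the difference quotient) and $\bar f_\eps=\sum_i f_i\chi_{[t_i,t_{i+1})}\to f$ in $L^2(Q)$, one has, for every $\xi\in H^1_0(\Omega)$,
\[
\frac{d}{dt}\int_\Omega \tilde u_\eps\,\xi+\int_\Omega(\nabla p_\eps-u_\eps\, V)\cdot\nabla\xi=\int_\Omega \bar f_\eps\,\xi\qquad\text{in }\D'(0,T).
\]
Testing against $\psi\in\D(0,T)$ and letting $\eps\to0$: the time-derivative term passes since $\tilde u_\eps\to u$ in $L^1(Q)$; $\int_\Omega\nabla p_\eps\cdot\nabla\xi$ by the weak $L^2(0,T;H^1_0)$-convergence; $\int_\Omega u_\eps V\cdot\nabla\xi$ by the weak $L^2(Q)$-convergence of $u_\eps$ paired with $\psi(t)\,V(x)\cdot\nabla\xi(x)\in L^2(Q)$, first for $\xi\in\D(\Omega)$ and then extending to $\xi\in H^1_0(\Omega)$ by density (the limiting drift functional being continuous on $H^1_0(\Omega)$ because $u=p^{1/m}$ inherits higher integrability from $p\in H^1_0(\Omega)\hookrightarrow L^{2^*}(\Omega)$); and $\int_\Omega\bar f_\eps\,\xi\to\int_\Omega f\,\xi$ by strong $L^2$-convergence. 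This is precisely \eqref{evolwf}, so $u$ meets Definition~\ref{defws}; together with $u(0)=u_0$ and $u\in\C([0,T);L^1(\Omega))$, $u$ is a solution of \eqref{pmef}, and uniqueness is immediate from Theorem~\ref{tcompcmef}(1).

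I expect the main obstacle to be the lack of strong compactness of the gradients: one has only the weak $L^2(0,T;H^1_0)$-bound on $p_\eps$, so the limit cannot be identified by compactness; the saving point is that the strong $L^1$-convergence, which comes for free from the nonlinear semigroup construction, yields a.e.\ convergence of $u_\eps$, and this is exactly what pins down $P=u^m$. A secondary, more bookkeeping-type difficulty is the drift term under the low regularity $V\in W^{1,2}(\Omega)$, which forces the interpolation via $p\in H^1_0\hookrightarrow L^{2^*}$ and the passage through $\D(\Omega)$-test functions, and the discrete time derivative, which is dealt with by the piecewise-linear interpolation trick.
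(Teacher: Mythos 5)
Your proposal is correct and follows essentially the same route as the paper: the Crandall--Liggett convergence of the $\eps$-Euler scheme, the discrete $L^2(\Omega)$ and $L^2(0,T;H^1_0(\Omega))$ bounds inherited from the stationary estimates \eqref{lqstat}--\eqref{lmst}, weak convergence of $p_\eps$ with identification of the limit as $u^m$, passage to the limit in the interpolated equation \eqref{uepstilde}, and uniqueness via Theorem \ref{tcompcmef}. The only difference is that you spell out details the paper leaves implicit (the a.e.\ identification $P=u^m$ and the integrability bookkeeping for the drift term), which is fine.
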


To prove this result, thanks to \eqref{convueps}, it is enough to study  moreover   the limit of   sequence $p_\eps$ given by the $\eps-$approximate solution.


\begin{lemma}  Let $u_\eps$  and $p_\eps$ be  the $\eps-$approximate solution given by \eqref{epsapprox}. We have
	\begin{enumerate}
		\item For any $q\in [1,\infty],$ we have  
		  \begin{equation} \label{lquepsevol} 
			\Vert u_\eps(t)\Vert_q  \leq  M_q^\eps,\quad  \hbox{ for any }t\geq 0,   
		\end{equation}     
 \begin{equation} 
 	M_q^\eps := \left\{  \begin{array}{ll}
 		   \left( \Vert u_0 \Vert_q + \int_0^T   \Vert f_\eps (t)\Vert_q \: dt  \right) e^{  (q-1)\: T\:   \Vert (\nabla \cdot V)^-\Vert_\infty  } \quad & \hbox{ if } 1\leq q<\infty \\  \\  
 		    \left( \Vert u_0 \Vert_\infty + \int_0^T   \Vert f_\eps (t)\Vert_\infty \: dt  \right) e^{ T\:   \Vert (\nabla \cdot V)^-\Vert_\infty }   \quad & \hbox{ if } q=\infty  .
 		   \end{array}\right.
				\end{equation}   
		\item  For each $\eps >0,$  we have  
			\begin{equation}\label{lmuepsevol}
			\begin{array}{c} 
				\frac{1}{m+1}   \int_\Omega \vert u_\eps(t)\vert ^{m+1} +  \int_0^t\!\!  \int_\Omega \vert \nabla p_\eps\vert^2  \leq   \int_0^t\!\!     \int_\Omega f_\eps\:p_\eps \: dx +   	   \int_0^t\!\!   \int \left(  \nabla \cdot  V\right)^- \:    p_\eps \: u_\eps     \: dx \\   \\  + 	 \frac{1}{m+1}   \int_\Omega \vert u_{0}\vert ^{m+1}.  
			\end{array} 
		\end{equation} 	  
	\end{enumerate}	
\end{lemma}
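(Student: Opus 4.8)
The plan is to prove both assertions by summing the discrete estimates over the Euler-implicit steps and then identifying the telescoped quantities with the piecewise-constant interpolants $u_\eps$ and $p_\eps$. For the first assertion, I would start from the stationary $L^q$-bound of Lemma \ref{lexistreg}, applied at each step $i$ with the right-hand side $u_i+\eps f_i$ and the step-size $\eps$ in place of $\lambda$. This gives, for $1\le q<\infty$,
\begin{equation}
\Vert u_{i+1}\Vert_q\le\bigl(1-(q-1)\eps\Vert(\nabla\cdot V)^-\Vert_\infty\bigr)^{-1}\Vert u_i+\eps f_i\Vert_q\le\bigl(1-(q-1)\eps\Vert(\nabla\cdot V)^-\Vert_\infty\bigr)^{-1}\bigl(\Vert u_i\Vert_q+\eps\Vert f_i\Vert_q\bigr).
\end{equation}
Iterating this recursion and using the elementary inequality $(1-a)^{-1}\le e^{2a}$ for $a$ small (valid once $\eps\le\eps_0$ is small enough), together with $\prod(1-(q-1)\eps\Vert(\nabla\cdot V)^-\Vert_\infty)^{-1}\le e^{(q-1)T\Vert(\nabla\cdot V)^-\Vert_\infty}$ after at most $n=T/\eps$ factors, yields $\Vert u_i\Vert_q\le M_q^\eps$ for every $i$, hence \eqref{lquepsevol}. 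The case $q=\infty$ is handled the same way using the $L^\infty$ line of \eqref{lqstateps}, and the case $q=1$ follows from the $L^1$-contraction of Corollary \ref{ccomp} applied step by step.

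For the second assertion I would use the stationary energy estimate \eqref{lmsteps} (equivalently \eqref{lmst}), but rather than taking $p_{i+1}$ directly as a test function in the difference form, I would exploit the convexity of $r\mapsto\frac{1}{m+1}|r|^{m+1}$. Testing \eqref{sti} with $p_{i+1}=u_{i+1}^m$ gives
\begin{equation}
\int_\Omega(u_{i+1}-u_i)\,u_{i+1}^m\,dx+\eps\int_\Omega|\nabla p_{i+1}|^2\,dx-\eps\int_\Omega u_{i+1}\,V\cdot\nabla p_{i+1}\,dx=\eps\int_\Omega f_i\,p_{i+1}\,dx,
\end{equation}
and the convexity inequality $(a-b)\,a^m\ge\frac{1}{m+1}(|a|^{m+1}-|b|^{m+1})$ bounds the first term below by $\frac{1}{m+1}(\Vert u_{i+1}\Vert_{m+1}^{m+1}-\Vert u_i\Vert_{m+1}^{m+1})$. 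The drift term is rewritten, exactly as in the proof of Lemma \ref{lexistreg}, as $\eps\int_\Omega V\cdot\nabla(\int_0^{p_{i+1}}\beta(r)\,dr)\,dx=-\eps\int_\Omega(\nabla\cdot V)\bigl(\int_0^{p_{i+1}}r^{1/m}\,dr\bigr)\,dx$ and estimated by $\eps\int_\Omega(\nabla\cdot V)^-\,u_{i+1}\,p_{i+1}\,dx$ using $\int_0^{p_{i+1}}r^{1/m}\,dr\le p_{i+1}\,u_{i+1}$. Summing over $i=0,\dots,k-1$ telescopes the $L^{m+1}$-terms, and since $u_\eps$ and $p_\eps$ are piecewise constant the sums $\eps\sum\int|\nabla p_{i+1}|^2$, $\eps\sum\int f_i p_{i+1}$, $\eps\sum\int(\nabla\cdot V)^- u_{i+1}p_{i+1}$ are precisely the time integrals $\int_0^{t_k}\int_\Omega|\nabla p_\eps|^2$, $\int_0^{t_k}\int_\Omega f_\eps p_\eps$, $\int_0^{t_k}\int_\Omega(\nabla\cdot V)^- u_\eps p_\eps$, which gives \eqref{lmuepsevol} for $t=t_k$, and then for general $t$ by the piecewise-constant structure.

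The routine parts are the two summations and the bookkeeping matching discrete sums to integrals of the interpolants; the only point requiring a little care is the drift term, where one must keep the full right-hand side $u_i+\eps f_i$ when iterating the $L^q$-bound so that the source contributes the additive $\int_0^T\Vert f_\eps(t)\Vert_q\,dt$ rather than a multiplicative factor — this is where I expect the main (though still mild) obstacle, namely checking that the product of the $n\approx T/\eps$ amplification factors stays bounded by $e^{(q-1)T\Vert(\nabla\cdot V)^-\Vert_\infty}$ uniformly in $\eps$, which is exactly the choice of $\eps_0$ in the discretization. Note also that the constant in \eqref{lmuepsevol} carries the extra term $\frac{1}{m+1}\int_\Omega|u_0|^{m+1}$ coming from the very first telescoping step $i=0$, and that $u_0\in L^2\subset L^{m+1}$ is not automatic for $m>1$; here one uses that $u_0\in L^2(\Omega)$ together with $\Omega$ bounded only when $m+1\le 2$, so in general this term should be read with the understanding that the estimate is used along approximations of $u_0$ in $L^\infty(\Omega)$ (as in Lemma \ref{lAm}) or that $u_0\in L^{m+1}(\Omega)$ is tacitly assumed, consistent with \eqref{lmuevol}.
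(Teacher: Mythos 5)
Your proof is correct and takes essentially the same route as the paper: part 1 is the stationary bound \eqref{lqstat} applied at each Euler step followed by a discrete Gronwall iteration (the paper sums the recursion $\Vert u_i\Vert_q\le\Vert u_{i-1}\Vert_q+\eps\Vert f_i\Vert_q+\eps(q-1)\Vert(\nabla\cdot V)^-\Vert_\infty\Vert u_i\Vert_q$ and invokes Gronwall, which is the same bookkeeping as your product of amplification factors), and part 2 tests \eqref{sti} with $p_{i+1}$, uses $(u_{i+1}-u_i)u_{i+1}^m\ge\frac{1}{m+1}\left(\vert u_{i+1}\vert^{m+1}-\vert u_i\vert^{m+1}\right)$ together with the drift rewriting via $\int_0^{p}r^{1/m}dr\le p\,u$, and telescopes, exactly as in the paper. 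Your two caveats are fine and shared with the paper itself: the product $\left(1-(q-1)\eps\Vert(\nabla\cdot V)^-\Vert_\infty\right)^{-T/\eps}$ exceeds $e^{(q-1)T\Vert(\nabla\cdot V)^-\Vert_\infty}$ only by an $O(\eps)$ correction (the paper's Gronwall step is equally loose, and only the uniform-in-$\eps$ bound matters), and when $\int_\Omega\vert u_0\vert^{m+1}=\infty$ the inequality \eqref{lmuepsevol} is vacuously true, so no extra hypothesis on $u_0$ is needed for the lemma as stated.
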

\begin{proof}
	Thanks to  Theorem \ref{texistm}, the  sequence $  (u_i)_{i=1,...n}$ of solutions of \eqref{sti} is well defined in $L^2(\Omega) $  and satisfies  
	\begin{equation}\label{stwsi}
		\displaystyle  \int_\Omega u_{i+1} \:\xi+   \eps \: \int_\Omega  \nabla p_{i+1}  \cdot  \nabla\xi  -  \eps \:
		\int_\Omega  u_{i+1}  \:  V\cdot \nabla  \xi    =    \int_{i\eps  }^{(i+1)\eps  } \int_\Omega f_i\: \xi,\quad \hbox{ for }i=1,...,n-1,
	\end{equation} 
for any $\xi\in H^1_0(\Omega).$ 	Thanks to \eqref{lqstat}, for any $1\leq q\leq \infty,$ we have
\begin{eqnarray}
	 \Vert u_i\Vert_q &\leq&  	 \Vert u_{i-1}\Vert_q  + \eps\: \Vert f_{i}\Vert_q   +   \eps\: (q-1) \:   \Vert (\nabla \cdot V)^-\Vert_\infty  \Vert u_{i}\Vert_q .
  \end{eqnarray}
By induction, this implies that,  for any $t\in [0,T),$ we have  
  \begin{equation}  
  \Vert u_\eps(t)\Vert_q    \leq     \Vert u_0 \Vert_q    + \int_0^T \Vert f_\eps(t)\Vert_q \: dt    +  (q-1) \:   \Vert (\nabla \cdot V)^-\Vert_\infty     \int_0^T  \Vert u_\eps(t)\Vert_q\: dt  .
	\end{equation}   
Using Gronwall Lemma, we deduce  \eqref{lquepsevol}, for any $1\leq q< \infty.$   The proof for the case $q=\infty$ follows in the same way by using \eqref{lqstat} with $q=\infty.$ 
Now, using  the fact that   
	\begin{equation}
	 	\left(u_i-u_{i-1} \right) p_i = \left(u_i-u_{i-1} \right) u_i^m \geq   \frac{1}{m+1}\left  (u_i^{m+1} -u_{i-1}^{m+1}\right)    
	\end{equation}
and 
\begin{equation}
	\int u_i  \:  V \cdot \nabla p_i \leq \int \int\left(\nabla \cdot V\right)^- p_i\: u_i , 
\end{equation}	we get  
	\begin{eqnarray} 
		\frac{1}{m+1}   \int_\Omega \vert u_i\vert ^{m+1} + \eps \int_\Omega \vert \nabla p_i\vert^2 &\leq&   \eps  \int_\Omega f_i\:p_i \: dx +   	\eps  \:  \int \left(  \nabla \cdot  V\right)^- \:    p_i \: u_i     \: dx \\  &  &  + 	 \frac{1}{m+1}   \int_\Omega \vert u_{i-1}\vert ^{m+1}.
	\end{eqnarray} 
 Summing this identity for $i=1,....,$  and using the definition of $u_\eps$, $p_\eps$ and $f_\eps,$ we get \eqref{lmuepsevol}.
\end{proof}

\bigskip 
\begin{proof}[\textbf{Proof of Proposition \ref{pconv}}]  Recall that we already know that $u_\eps\to u$ in $\C([0,T);L^1(\Omega),$ as $\eps\to 0.$ Now, combining \eqref{lquepsevol} and \eqref{lmuepsevol} with Poincaré and Young  inequalities, one sees that 
\begin{equation}  	 
	\frac{1}{m+1}  \frac{d}{dt} \int_\Omega \vert u_\eps\vert ^{m+1} \: dx+  \int_\Omega \vert \nabla p_\eps\vert^2\: dx    \leq    C (N,\Omega)   \left(  	\int_\Omega \vert f_\eps\vert^2\: dx +	 \Vert (\nabla \cdot V)^-  \Vert_\infty\: (M_2^\eps)^2      \right ),\quad \hbox{ in }\D'(0,T).
\end{equation}   	
	This implies that 	$p_\eps$ is bounded in $L^2(0,T;H^1_0(\Omega)).$  This implies that 
 	\begin{equation}\label{convpeps}
		p_\eps \to u^m,\quad \hbox{ in }L^2(0,T;H^1_0(\Omega))-\hbox{weak},\quad \hbox{ as }\eps\to 0 .
	\end{equation}  
  Recall that taking 
	$$\tilde u_\eps(t) =\frac{(t-t_i)u_{i+1} -  (t-t_{i+1})u_i}{\eps} , \quad \hbox{ for any }t\in [t_i,t_{i+1}),\:  i=1,...n,$$
	we have 
\begin{equation}\label{uepstilde}
	\partial_t\tilde u_\eps -\Delta p_\eps +\nabla \cdot(u_\eps\: V)=f_\eps,\quad \hbox{ in }\D'(Q).
\end{equation}	
Moreover, we know that   $\tilde u_\eps \to u,$  in $\C([0,T),L^1(\Omega)).$ 
So letting $\eps\to 0 $ in \eqref{uepstilde},  we deduce that $u$ is a solution of \eqref{pmef}.   Letting $\eps \to 0$ in \eqref{lquepsevol} and \eqref{lmuepsevol}, we get respectively  \eqref{lquevol}
and   \eqref{lmuevol}.  
\end{proof}

 \bigskip 
 \begin{proof}[\textbf{Proof of Theorem \ref{texistevolm}}] The proof follows by Proposition \ref{pconv}.
   
\end{proof}

\section{The  limit as $m\to\infty.$}
\setcounter{equation}{0}

Since the solution of the problem \eqref{pmef} is the mild solution associated with the operator $\A_m,$  we begin by studying the $L^1-$ limit, as $m\to\infty,$ of the solution of the stationary problem  \eqref{st}.  Formally,   this limiting problem  is given by 
\begin{equation}	\label{sths}
	\left\{  \begin{array}{ll}\left.
		\begin{array}{l}
			\displaystyle v - \Delta p +  \nabla \cdot (v  \: V)=f \\
			\displaystyle v \in \hbox{Sign}(p)\end{array}\right\}
		\quad  & \hbox{ in }  \Omega\\  \\
		\displaystyle p= 0  & \hbox{ on }  \partial \Omega .\end{array} \right.
\end{equation}
This is the stationary problem associated with the so called Hele-Shaw problem. Thanks to \cite{Igshaw}, for any $f\in L^2(\Omega),$ \eqref{sths} has a unique solution $(v,p)$ in the sense  that    $(v ,p) \in  L^\infty(\Omega) \times H^1_0(\Omega)$,   $\displaystyle    v \in \sign(p)$ a.e. in $\Omega,$  and 
\begin{equation}\label{stwshs}
	\displaystyle  \int_\Omega v \:\xi+  \int_\Omega  \nabla p \cdot  \nabla\xi  -
	\int_\Omega  v \:  V\cdot \nabla  \xi    =     \int_\Omega f\: \xi, \quad \hbox{ for any } \xi\in H^1_0(\Omega).
\end{equation}

To begin with, we prove the following  incompressible limit for stationary problem.   

\begin{proposition}\label{tconvumWst}
	Under the assumptions of Theorem \ref{texistm}, let   $v_m$ be the solution of \eqref{st}. As $m\to\infty,$ we have 
	\begin{equation}\label{convum}
		v_m\to v,\quad \hbox{ in } L^2(\Omega)\hbox{-weak},
	\end{equation}
	\begin{equation}\label{convpm}
		v_m^m\to p,\quad \hbox{ in } H^1_0(\Omega),
	\end{equation} 
	and $(v,p)$ is the   solution of \eqref{sths}. 
\end{proposition}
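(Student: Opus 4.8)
The plan is to combine the $m$-uniform bounds of Theorem~\ref{texistm} with weak compactness, and then carry out the two nontrivial identifications: the complementarity relation $v\in\sign(p)$ and the strong convergence of the pressures.

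First I would record the a priori bounds. By \eqref{lqstat} with $q=2$ the family $(v_m)$ is bounded in $L^2(\Omega)$, and by \eqref{lmst} together with Poincar\'e's inequality the family $p_m:=v_m^m$ is bounded in $H^1_0(\Omega)$, both bounds being independent of $m$. Since $\|v_m\|_{2m}^m=\|p_m\|_2$ is bounded, $\|v_m\|_{2m}\le C^{1/m}\to1$, and interpolation on the bounded set $\Omega$ gives, for every fixed $q\in(1,\infty)$, a bound on $\|v_m\|_q$ uniform in $m$. Along a subsequence one then has $v_m\rightharpoonup v$ in every $L^q(\Omega)$, $p_m\rightharpoonup p$ in $H^1_0(\Omega)$-weak, and, by Rellich and a further extraction, $p_m\to p$ in $L^2(\Omega)$ and a.e. in $\Omega$. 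Lower semicontinuity of $\|\cdot\|_{L^q}$ under weak convergence, together with $\|v_m\|_q\le|\Omega|^{1/q-1/(2m)}\|v_m\|_{2m}$ and $\|v_m\|_{2m}\to1$, yields $\|v\|_q\le|\Omega|^{1/q}$ for every $q$, hence $\|v\|_\infty\le1$.

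Passing to the limit in \eqref{stwf} is immediate for test functions $\xi\in C^\infty_c(\Omega)$: the diffusion term uses $\nabla p_m\rightharpoonup\nabla p$ in $L^2$ and the drift term uses $v_m\rightharpoonup v$ in $L^2$ paired against $V\cdot\nabla\xi\in L^2$; since every term of the resulting identity is continuous in $\xi$ for the $H^1_0$-norm (here one uses $\|v\|_\infty\le1$) and $C^\infty_c(\Omega)$ is dense in $H^1_0(\Omega)$, one obtains \eqref{stwshs} for all $\xi\in H^1_0(\Omega)$. It remains to show $v\in\sign(p)$, which, since $\|v\|_\infty\le1$ forces $vp\le|p|$ pointwise, amounts to $\int_\Omega vp\ge\int_\Omega|p|$. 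Here weak$\times$strong convergence gives $\int_\Omega v_mp_m\to\int_\Omega vp$; on the other hand $v_mp_m=|v_m|^{m+1}$ and, for every $\delta\in(0,1)$, $\int_\Omega|v_m|^{m+1}\ge\int_\Omega|v_m|^m-\bigl(\delta+(1-\delta)^m\bigr)|\Omega|$, while $\int_\Omega|v_m|^m=\int_\Omega|p_m|\to\int_\Omega|p|$; letting $m\to\infty$ and then $\delta\to0$ gives the required inequality, hence $vp=|p|$ a.e. and $v\in\sign(p)$. In particular $\int_\Omega|v_m|^{m+1}\to\int_\Omega|p|$. For the strong convergence \eqref{convpm} I would then run an energy argument: testing \eqref{stwf} with $\xi=p_m$ produces an identity for $\int_\Omega|\nabla p_m|^2$ whose source and reaction terms pass to the limit by weak$\times$strong convergence; the drift term $\int_\Omega v_mV\cdot\nabla p_m$, a priori a product of two only-weakly-convergent factors, is rewritten using $v_m\nabla v_m^m=\tfrac{m}{m+1}\nabla|v_m|^{m+1}$ and integration by parts (legitimate after a routine truncation in $p_m$) as $-\tfrac{m}{m+1}\int_\Omega(\nabla\cdot V)\,|v_m|^{m+1}$; since $|v_m|^{m+1}=|p_m|^{(m+1)/m}\to|p|$ a.e. and $\int_\Omega|v_m|^{m+1}\to\int_\Omega|p|$, Scheff\'e's lemma upgrades this to $|v_m|^{m+1}\to|p|$ in $L^1(\Omega)$, so, pairing against $\nabla\cdot V\in L^\infty$, the drift term converges to $-\int_\Omega(\nabla\cdot V)|p|=\int_\Omega vV\cdot\nabla p$, the last equality using $v\,\nabla p=\nabla|p|$ a.e. (a consequence of $v\in\sign(p)$) and integration by parts. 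Comparing the limit with the corresponding identity obtained by testing \eqref{stwshs} against $p$ gives $\int_\Omega|\nabla p_m|^2\to\int_\Omega|\nabla p|^2$, which with $\nabla p_m\rightharpoonup\nabla p$ in $L^2$ yields \eqref{convpm}. Thus $(v,p)$ solves \eqref{sths}, and by uniqueness of its solution (recalled above from \cite{Igshaw}) the limits are independent of the subsequence, so \eqref{convum} and \eqref{convpm} hold for the whole sequence.

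The main obstacle is exactly the treatment of the drift term in the energy identity: naive weak convergence does not suffice, and one is forced to exploit the pointwise convergence $|v_m|^{m+1}\to|p|$ and Scheff\'e's lemma to obtain its $L^1$-convergence, which in turn requires the complementarity $v\in\sign(p)$ to have been established first; the integration-by-parts rearrangement turning $v_m\nabla p_m$ into $(\nabla\cdot V)\,|v_m|^{m+1}$ is what makes this possible, and it is the structural reason the hypothesis $\nabla\cdot V\in L^\infty$ enters.
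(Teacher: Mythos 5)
Your proposal is correct and follows the same skeleton as the paper's proof: the $m$-uniform bounds \eqref{lqstat}--\eqref{lmst}, weak compactness of $v_m$ in $L^2$ and of $p_m$ in $H^1_0$ plus Rellich, passage to the limit in \eqref{stwf}, identification of $v\in\sign(p)$, and then strong convergence of $\nabla p_m$ by testing \eqref{stwf} with $p_m$, rewriting the drift term as $-\tfrac{m}{m+1}\int(\nabla\cdot V)\,|v_m|^{m+1}$, and comparing with the energy identity obtained from \eqref{stwshs} tested with $p$. The only genuine divergences are local. For the complementarity relation the paper simply invokes monotonicity arguments (the closure property of maximal monotone graphs, Proposition 2.5 of \cite{Br}, applied to $v_m=p_m^{1/m}$ with $v_m\rightharpoonup v$ and $p_m\to p$ in $L^2$), whereas you give a self-contained proof via the uniform $L^q$ interpolation bound yielding $\Vert v\Vert_\infty\le 1$ and the elementary inequality $|v_m|^{m+1}\ge |v_m|^m-(\delta+(1-\delta)^m)$ forcing $\int vp\ge\int|p|$; your route is longer but makes the graph-closure step explicit. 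For the limit of the drift term the paper uses the simpler weak$\times$strong pairing ($v_m\rightharpoonup v$ in $L^2$ against $(\nabla\cdot V)\,p_m\to(\nabla\cdot V)\,p$ strongly in $L^2$, then $vp=|p|$), while you go through a.e.\ convergence of $|v_m|^{m+1}$ and Scheff\'e's lemma — heavier than needed, but valid since you had already established $\int v_mp_m\to\int vp=\int|p|$. Your final appeal to the uniqueness of the solution of \eqref{sths} from \cite{Igshaw} to upgrade subsequential to full-sequence convergence is exactly what the paper relies on as well.
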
 
\begin{proof}  Thanks to  \eqref{lqstat}, there exists $v\in H^1_0(\Omega),$ such that    \eqref{convum} is fulfilled.  Thanks to \eqref{lmst}, we see that the  sequence  $p_m$  is   bounded   in $H^1_0(\Omega) ,$ which implies  that, by taking a sub-sequence if necessary,   
	\begin{equation}\label{weakpm}
		v_m^m\to p,\quad \hbox{ in } H^1_0(\Omega)\hbox{-weak} 
	\end{equation}  
and 
\begin{equation}\label{weakpm}
	v_m^m\to p,\quad \hbox{ in } L^2(\Omega).
\end{equation}  
	Using monotonicity arguments (see for instance Proposition 2.5 of \cite{Br}) we get   $ v \in \sign(p)$ a.e. in $\Omega, $ and  letting $m\to\infty$ in \eqref{weakeps}, we obtain that $(v,p)$ satisfies  \eqref{stwshs}.   To prove the strong convergence of $p_m,$ we use the same argument of the proof of Lemma \ref{lconveps}. Indeed,    taking $p_m$ as a test function in \eqref{stwf}, we have 
	\begin{eqnarray}  
		\lambda \int \vert \nabla p_m\vert^2\: dx  &=& \int   (f- v_m )\: p_m\: dx +\lambda   \int \nabla \cdot V \: \left(\int_0^{p_m}  r^\frac{1}{m}dr\right) \: dx\\  
		&=& \int   (f- v_m )\: p_m\: dx +\lambda  \frac{m}{m+1} \int \nabla \cdot V \: v_m\: p_m\: dx   . 
	\end{eqnarray}     
	Letting $m\to \infty,$ and using \eqref{convpm} and \eqref{weakpm},  we  see that 
	\begin{eqnarray} 
		\lim_{m\to \infty}  
		\lambda 	\int \vert \nabla p_m\vert^2\: dx  &=& \int  (f- u )\: p\: dx + \lambda \int u\: p\: \nabla \cdot V \:   dx \\ 
		&=& \int  (f- u )\: p\: dx + \lambda \int \nabla \cdot V \: \vert p\vert  \: dx . 
	\end{eqnarray}  
We know that $(u,p)$ is a solution   of \eqref{sths}, so one sees easily  that 
\begin{equation}
  \lambda \int \vert \nabla p\vert^2\: dx = \int  (f- u )\: p\: dx + \lambda \int \nabla \cdot V \: \vert p\vert  ,
\end{equation}
so that  
$$	\lim_{m\to \infty}  
\lambda 	\int \vert \nabla p_m\vert^2\: dx =  \lambda \int \vert \nabla p\vert^2\: dx.$$
	Thus the strong convergence of $\nabla p_m$. 
	
\end{proof}

For the strong convergence of $v_m,$ under the assumption \eqref{HypsupportV}, we prove first the following convergence for the solution of the stationary problem.

\begin{theorem}\label{tconvumSst}
	Under   the assumptions of Theorem \ref{tbvm} ;  i.e.  $V\in W^{1,2}(\Omega)$,  $\nabla \cdot V\in L^\infty(\Omega) $   and satisfies  \eqref{HypsupportV}, 	for any $0<\lambda < \lambda_1,$  the convergence \eqref{convum} holds to be true strongly in $L^1(\Omega)$.    Here
	\begin{equation}
		\lambda_1 :=   1/\sum_{i,k} \Vert \partial_{x_i} V_k \Vert_\infty. 
	\end{equation}

\end{theorem}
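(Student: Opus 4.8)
The plan is to promote the weak $L^2(\Omega)$‑convergence already granted by Proposition~\ref{tconvumWst} to strong $L^1(\Omega)$‑convergence. The engine is a bound for $v_m$ in $BV(\Omega_h)$ uniform in $m$ — the content of the $BV_{loc}$ estimate, Theorem~\ref{tbvm} — after which the uniform $L^\infty$‑bound of Theorem~\ref{texistm} disposes of the boundary layer. Note first that $\|(\nabla\cdot V)^-\|_\infty\le\|\nabla\cdot V\|_\infty\le\sum_{i,k}\|\partial_{x_i}V_k\|_\infty$, so $\lambda_1\le\lambda_0$ and, for $0<\lambda<\lambda_1$, \eqref{lqstat} gives $\|v_m\|_{L^\infty(\Omega)}\le M$ with $M$ independent of $m$; also the range $(0,\lambda_1)$ is nonempty precisely when the full gradient $DV$ lies in $L^\infty(\Omega)$, which is therefore in force.

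For the $BV$ estimate, fix a coordinate direction $e$, a small $t>0$, and set $v_m^t:=v_m(\cdot+te)$. On a neighbourhood of $\mathrm{supp}\,\omega_h$ — where $\omega_h$ is the cut‑off constructed under~\eqref{HypsupportV} in Section~1, so that $\omega_h\equiv1$ on $\Omega_h$, $\mathrm{supp}\,\omega_h\Subset\Omega$, and~\eqref{HypsupportV2} holds — the translate $v_m^t$ solves the same operator as $v_m$ with a shifted source:
\[
v_m^t-\lambda\,\Delta (v_m^t)^m+\lambda\,\nabla\cdot(v_m^t\,V)=f(\cdot+te)+\lambda\,\nabla\cdot\!\big(v_m^t\,(V-V(\cdot+te))\big).
\]
Writing a localised Kato inequality for the pair $(v_m,v_m^t)$ (by the doubling/de‑doubling method of Proposition~\ref{PKato}) and testing against $\omega_h$, three facts keep the estimate uniform in $m$: (i) since $\nabla\omega_h$ is carried by $\Omega\setminus\Omega_h$, the drift term $-\int(v_m-v_m^t)^+\,V\cdot\nabla\omega_h$ is nonnegative by~\eqref{HypsupportV2} and may be dropped — this is where the outward‑pointing condition~\eqref{HypsupportV} enters; (ii) the diffusion term is, in modulus, at most $\|\Delta\omega_h\|_\infty\int_{\mathrm{supp}\,\omega_h}|v_m^m-(v_m^t)^m|$, which is $O(t)$ uniformly in $m$ because $v_m^m$ is bounded in $H^1_0(\Omega)$ — hence in $W^{1,1}_{\mathrm{loc}}$ — by~\eqref{lmst}; (iii) the shifted‑field term $\tfrac1t\,\nabla\cdot\!\big(v_m^t(V-V(\cdot+te))\big)$ is, up to $o(1)$, $-\partial_e V\cdot\nabla v_m-v_m\,\partial_e(\nabla\cdot V)$, and the first summand is pointwise dominated by $\big(\sum_k|\partial_{x_i}V_k|\big)|\nabla v_m|\le\big(\sum_k\|\partial_{x_i}V_k\|_\infty\big)|\nabla v_m|$, so that summing over the $2N$ signed coordinate directions, dividing by $t$, and reversing the roles of $v_m$ and $v_m^t$ produces, for small $t$,
\[
\Big(1-\lambda\sum\nolimits_{i,k}\|\partial_{x_i}V_k\|_\infty\Big)\sum\nolimits_i\int_{\Omega_h}|\partial_{x_i}v_m(x)|\,dx\;\le\;C(h,\Omega,f,V),
\]
with $C$ independent of $m$; the $m$‑independent control of $\tfrac1t\|f-f(\cdot+te)\|_1$ is obtained by first taking $f\in W^{1,1}(\Omega)$ and then passing to general $f\in L^2(\Omega)$ via the $L^1$‑contraction of Corollary~\ref{ccomp}. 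For $0<\lambda<\lambda_1$ the prefactor is positive, hence $(v_m)$ is bounded in $BV(\Omega_h)$ uniformly in $m$, for every small $h>0$.

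The remainder is soft. The compact embedding $BV(\Omega_h)\hookrightarrow L^1(\Omega_h)$ together with a diagonal extraction along $h=h_0/k\downarrow0$ gives a subsequence of $(v_m)$ converging in $L^1_{\mathrm{loc}}(\Omega)$ and a.e.\ in $\Omega$; its limit is $v$ by the weak convergence of Proposition~\ref{tconvumWst}. As $\|v_m\|_{L^\infty(\Omega)}\le M$, dominated convergence improves this to $v_m\to v$ in $L^1(\Omega)$ (indeed in every $L^q(\Omega)$, $q<\infty$), and since every subsequence of $(v_m)$ admits a further subsequence with the same limit, the whole sequence converges. This is precisely~\eqref{convum} in the strong $L^1(\Omega)$ sense, while~\eqref{convpm} and the fact that $(v,p)$ solves~\eqref{sths} are already part of Proposition~\ref{tconvumWst}.

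The hard part is the $BV$ estimate of the second paragraph: carrying the translation/comparison argument all the way to $\partial\Omega$ with constants independent of $m$. Everything hinges on the two ingredients highlighted there — the adapted cut‑offs $\omega_h$ (hence condition~\eqref{HypsupportV}), which turn the boundary contribution into a favourable sign, and the absorption step, available exactly when $\lambda\sum_{i,k}\|\partial_{x_i}V_k\|_\infty<1$, i.e.\ $\lambda<\lambda_1$; the mismatch between $f\in L^2$ and the $W^{1,1}$‑data needed for the difference‑quotient bound is bridged by the $L^1$‑contraction, as indicated.
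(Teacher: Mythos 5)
Your overall strategy is the same as the paper's: a uniform-in-$m$ $BV_{loc}$ bound for $v_m$, then $BV$ compactness plus a uniform integrability argument to upgrade the weak convergence of Proposition \ref{tconvumWst} to strong $L^1(\Omega)$ convergence. Where you differ is in how the $BV$ bound is produced: the paper differentiates the $\beta_\eps$-regularized equation (Lemma \ref{lbvreg}), tests with $H_\sigma(\partial_{x_i}v)\,\xi$, takes $\xi=\omega_h$ and then lets $\eps\to 0$ with lower semicontinuity of the variation measures, whereas you run a Kruzhkov-type translation argument (Kato's inequality for $v_m$ against its translate, with the commutator $\nabla\cdot\bigl(v_m^t(V-V(\cdot+te))\bigr)$ absorbed when $\lambda\sum_{i,k}\Vert\partial_{x_i}V_k\Vert_\infty<1$). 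Both exploit the same two ingredients (the cut-offs $\omega_h$ built from \eqref{HypsupportV}, and absorption below $\lambda_1$), and your sign usage for the $\omega_h$-term is the geometrically correct one ($V\cdot\nabla\omega_h\le 0$ near $\partial\Omega$, as in the Section 1 version of the lemma; the inequality printed in Lemma \ref{lomegah} has the sign reversed), so no issue there.

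Two points need repair. First, your absorption step is only meaningful if the quantity being absorbed is finite to begin with: for fixed $m$ the solution of \eqref{st} is only known to satisfy $v_m\in L^2(\Omega)$, $v_m^m\in H^1_0(\Omega)$, so the difference quotients $\frac1t\int\omega_h\vert v_m^t-v_m\vert$ (equivalently $\int\omega_h\,d\vert\partial_{x_i}v_m\vert$, and likewise the term $\partial_eV\cdot\nabla v_m$ you use to bound the commutator) are not a priori finite, and the inequality $(1-\lambda\lambda_1)\,a\le C$ is vacuous if $a=\infty$. This is precisely why the paper performs the estimate on the $\beta_\eps$-regularized problem \eqref{pstbeta}, where $v_\eps,p_\eps\in H^2_{loc}(\Omega)$, and only then passes to the limit in $\eps$; your translation argument should likewise be run at the $\eps$-level (or you must otherwise justify finiteness before absorbing). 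Second, the claim $\Vert v_m\Vert_{L^\infty(\Omega)}\le M$ from \eqref{lqstat} requires $f\in L^\infty(\Omega)$, which is not among the hypotheses ($f\in BV_{loc}(\Omega)\cap L^2(\Omega)$ only), so the dominated-convergence step at the end is not justified as stated; replace it by Vitali's theorem, using the $m$-independent $L^2$ bound from \eqref{lqstat} with $q=2$ (equi-integrability on the bounded set $\Omega$) together with the $L^1_{loc}$ convergence obtained from the $BV$ compactness. With these two corrections your argument is a legitimate alternative derivation of the key estimate \eqref{bvstat}; what the paper's route buys is that the differentiated-equation computation handles the diffusion term exactly (producing the $(\Delta\omega_h)^+\vert\partial_{x_i}p\vert$ term) without any translation bookkeeping, while your route avoids Lemma \ref{lbvreg} and stays closer to the $L^1$-contraction structure already established.
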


\begin{corollary}\label{cconvam}
Under the assumptions of Theorem \ref{tconvumSst}, the operator $\A_m$ converges to $\A$ in the sense of resolvent in $L^1(\Omega),$ where $\A$ is defined by  :  $\mu\in \A(z)$ if and only if $\mu,\: z\in L^1(\Omega)$ and $z$ is a solution of the problem 
	\begin{equation}	\label{eqopinfty}
		\left\{  \begin{array}{ll} 
			\displaystyle  - \Delta p +  \nabla \cdot (z  \: V)=\mu 
			\quad  & \hbox{ in }  \Omega\\    \\
			z\in \sign(p)\\  \\  
			\displaystyle p= 0  & \hbox{ on }  \partial \Omega  ,\end{array} \right.
	\end{equation}
	in the sense that $z\in L^\infty(\Omega),$  $\exists \: p\in H^1_0(\Omega)$ such that   $ p \in    H^1_0(\Omega),$  $u\in \sign(p)$ a.e. in $\Omega$   
	and 
	\begin{equation} 
		\displaystyle  \int_\Omega  \nabla p  \cdot  \nabla\xi  -
		\int_\Omega  z \:  V\cdot \nabla  \xi    =     \int_\Omega \mu \: \xi, \quad \forall \:  \xi\in H^1_0(\Omega)\cap L^\infty(\Omega).
	\end{equation}   
Moreover, we have 
$$\overline{\D(\A)} =\Big\{ z\in L^\infty(\Omega)\: :\:  \vert z\vert \leq 1\hbox{ a.e. in }\Omega \Big\}.  $$
\end{corollary}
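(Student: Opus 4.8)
The plan is to obtain the resolvent convergence as an immediate consequence of the strong $L^1$-convergence of the stationary solutions already established in Theorem~\ref{tconvumSst}, and then to identify $\overline{\D(\A)}$ by producing an explicit family of approximants lying in $\D(\A)$. Throughout I fix $0<\lambda<\lambda_1$; since $\Vert(\nabla\cdot V)^-\Vert_\infty\le\sum_{i,k}\Vert\partial_{x_i}V_k\Vert_\infty$ one has $\lambda_1\le\lambda_0$, so this range is admissible for $\A_m$ as well. By Corollary~\ref{ccomp} the resolvent $(I+\lambda\A_m)^{-1}$ is an $L^1$-contraction whose range contains $L^2(\Omega)$ (Theorem~\ref{texistm}), and the analogous facts for $\A$ follow from the uniqueness and $L^1$-comparison for the stationary Hele--Shaw problem proved in \cite{Igshaw}; hence both $(I+\lambda\A_m)^{-1}$ and $(I+\lambda\A)^{-1}$ extend uniquely to $L^1$-contractions defined on all of $L^1(\Omega)=\overline{L^2(\Omega)}$.

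Next I would check resolvent convergence on $L^2$-data: for $f\in L^2(\Omega)$ one has $(I+\lambda\A_m)^{-1}f=v_m$, the solution of the stationary problem \eqref{st}, and Theorem~\ref{tconvumSst} together with Proposition~\ref{tconvumWst} gives $v_m\to v$ in $L^1(\Omega)$ with $(v,p)$ the (unique) solution of the limiting stationary Hele--Shaw problem, which by definition of $\A$ means $v=(I+\lambda\A)^{-1}f$ (note $\mu:=(f-v)/\lambda\in L^2(\Omega)\subseteq L^1(\Omega)$ and $v\in L^\infty(\Omega)$, so $(v,\mu)\in\A$). To reach arbitrary $f\in L^1(\Omega)$ I would argue by density: for any $g\in L^2(\Omega)$, the uniform $L^1$-contractivity yields
\[
\Vert(I+\lambda\A_m)^{-1}f-(I+\lambda\A)^{-1}f\Vert_1\le 2\Vert f-g\Vert_1+\Vert(I+\lambda\A_m)^{-1}g-(I+\lambda\A)^{-1}g\Vert_1 ,
\]
and letting first $m\to\infty$, so that the last term vanishes by the previous step, and then $\Vert f-g\Vert_1\to 0$, one obtains the asserted resolvent convergence in $L^1(\Omega)$.

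For the characterisation of $\overline{\D(\A)}$, the inclusion into $\{z\in L^\infty(\Omega):\vert z\vert\le1\}$ is immediate: every $z\in\D(\A)$ satisfies $z\in\sign(p)$ a.e. for the associated $p$, hence $\vert z\vert\le1$ a.e., and $\{w\in L^1(\Omega):\vert w\vert\le1\text{ a.e.}\}$ is closed in $L^1(\Omega)$. For the reverse inclusion I would proceed in two steps. First, any $z\in H^1(\Omega)\cap L^\infty(\Omega)$ with $\vert z\vert\le1$ a.e. belongs to $\D(\A)$: taking $p\equiv0\in H^1_0(\Omega)$ one has $z\in[-1,1]=\sign(p)$ a.e., and $\mu:=\nabla\cdot(zV)=z\,\nabla\cdot V+V\cdot\nabla z$ lies in $L^1(\Omega)$ because $z\,\nabla\cdot V\in L^\infty(\Omega)$ and $V\cdot\nabla z\in L^1(\Omega)$ (since $V\in W^{1,2}(\Omega)$ and $\nabla z\in L^2(\Omega)$); as $zV\in W^{1,1}(\Omega;\RR^N)$, integration by parts against any $\xi\in H^1_0(\Omega)\cap L^\infty(\Omega)$ recovers exactly the weak formulation, so $(z,\mu)\in\A$. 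Second, an arbitrary $z\in L^\infty(\Omega)$ with $\vert z\vert\le1$ is approximated in $L^1(\Omega)$ by the mollifications of its extension by $0$ to $\RR^N$, whose restrictions to $\overline\Omega$ lie in $C^\infty(\overline\Omega)\subseteq H^1(\Omega)$ and are bounded by $1$. Combining the two steps gives $\{z\in L^\infty(\Omega):\vert z\vert\le1\}\subseteq\overline{\D(\A)}$, and equality follows.

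I expect the only genuinely non-soft ingredient to be the strong $L^1$-convergence $v_m\to v$ of Theorem~\ref{tconvumSst}, that is, the place where the $BV_{loc}$-estimate is used; once that is granted, both halves of the corollary reduce to routine contractivity and approximation arguments. A point to handle with some care is the admissibility of the inverse for $\A$, namely $R(I+\lambda\A)\supseteq L^2(\Omega)$ together with uniqueness of the stationary Hele--Shaw solution, for which I would rely on \cite{Igshaw} or, equivalently, on Theorem~\ref{tconvumSst} and Proposition~\ref{tconvumWst} themselves, which already construct such a solution as the limit of $v_m$.
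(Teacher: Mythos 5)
Your proposal is correct and follows the route the paper intends: the corollary is given without a separate proof precisely because the resolvent convergence is nothing but the strong $L^1$-convergence $v_m=(I+\lambda\A_m)^{-1}f\to (I+\lambda\A)^{-1}f$ of Theorem \ref{tconvumSst} (extended from $L^2$ to $L^1$ data by uniform $L^1$-contractivity, with uniqueness/comparison for the limit problem taken from the cited Hele--Shaw results), and your $p\equiv 0$ construction together with mollification of the truncated data is the standard way to identify $\overline{\D(\A)}$. The only points worth noting are routine: the limit of $v_m$ solves the $\lambda$-scaled version of \eqref{sths}, so it is indeed $(I+\lambda\A)^{-1}f$, and the integration by parts of $\int_\Omega z\,V\cdot\nabla\xi\,dx$ for $zV\in W^{1,1}(\Omega)^N\cap L^2(\Omega)^N$ against $\xi\in H^1_0(\Omega)\cap L^\infty(\Omega)$ uses the usual approximation by uniformly bounded smooth test functions.
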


The main element  to prove Theorem \ref{tconvumSst}  is  $BV_{loc}$-estimates on $v_m.$  Recall that a given function   $u\in L^1(\Omega)$ is said to be of bounded variation if and only if, for each $i=1,...N,$    
\begin{equation}
	TV_i(u,\Omega)  := \sup\left\{ 	\int_\Omega  u\: \partial_{x_i} \xi \: dx  \:  :\:  \xi\in \mathcal C^1_c(\Omega)  \hbox{ and }\Vert \xi\Vert_\infty \leq 1\right\}  <\infty, \end{equation}
here   $\C^1_c(\Omega)$ denotes the set of $\C^1-$function compactly supported in $\Omega.$   More generally a function is locally of bounded variation in a domain $\Omega$ if and only if for any open set $\omega\subset\!   \subset \Omega,$   $ TV_i(u,\omega)<\infty$ for any $i=1,...,N.$ 
In general a function  locally of bounded variation (as well as function of bounded variation) in $\Omega,$  may not be differentiable, but by the Riesz representation theorem, their  partial derivatives  in the sense of distributions are  Borel measure in $\Omega.$  This gives rise to the definition of the   vector space of functions of   bounded variation in $\Omega$, usually denoted by    $BV(\Omega),$ as the set of   $u\in L^1(\Omega)$ for which there   are Radon measures $\mu_1,...,\mu_N$ with finite total mass in $\Omega$ such that   
\begin{equation}
	\int_\Omega v\: \partial_{x_i} \xi \: dx =-\int_\Omega \xi\: d\mu_i,\quad \hbox{ for any }\xi\in \mathcal C_{c}(\Omega),\quad \hbox{ for }i=1,...,N. 
\end{equation}
Without abuse of notation  we continue to point out the measures $\mu_i$ by $\partial_{x_i}v$ anyway,  and by $\vert \partial_{x_i}v\vert $   the total variation of $\mu_i.$ Moreover, we'll use  as usual $Dv=(\partial_{x_1}v,...,\partial_{x_N}v)$  the vector valued Radon measure  pointing out the gradient of any function $v\in BV(\Omega),$ and $\vert Dv\vert$ indicates the total variation measure of  $v.$   In particular, for any open set $\omega\subset\!   \subset \Omega,$ $TV_i(v,\omega)=\vert \partial_{x_i}v\vert(\omega)<\infty,$ and the total variation of the function $v$ in  $\omega$  is finite too ; i.e. 
\begin{equation}
	\Vert Dv\Vert(\omega)= \sup\left\{ 	\int_\Omega  v\: \nabla \xi \: dx  \:  :\:  \xi\in \mathcal C^1_c(\omega)  \hbox{ and }\Vert \xi\Vert_\infty \leq 1\right\}  <\infty.
\end{equation}
At last, let us remind the reader here  the well known  compactness result for functions of bounded variation : given a sequence $u_n$ of functions in $BV_{loc}(\Omega)$ such that, for any   open set $\omega\subset\!   \subset  \Omega,$ we have 
\begin{equation}
	\sup_n\left\{ \int_\omega \vert v_n\vert\: dx + \vert Dv_n\vert (\omega) \right\} <\infty,
\end{equation} 
there exists  a subsequence that we denote again by $v_n$  which converges in $L^1_{loc}(\Omega)$ to a function   $v\in BV_{loc}(\Omega).$  Moreover, for any  compactly supported continuous function $0\leq \xi$, the limit $u$ satisfies 
\begin{equation}
	\int \xi\:  \vert \partial_{x_i}v \vert \leq \liminf_{n\to\infty }  \int \xi\: \vert \partial_{x_i}v_n \vert ,  
\end{equation}
for any $i=1,...N,$ and 
\begin{equation}
	\int \xi\:  \vert Dv\vert \leq \liminf_{n\to\infty }  \int \xi\:  \vert Dv_n\vert.
\end{equation}

 \bigskip
 Under the assumption \eqref{HypsupportV}, the following sequence of test functions plays an important role in the proof of  $BV_{loc}$-estimates and   convergence results of Theorem  \ref{treglimum}.

\begin{lemma}\label{lomegah}
	Under the assumption  \eqref{HypsupportV},  	  there exists $0\leq \omega_h\in \mathcal H^2(\Omega_h)$ compactly supported in $\Omega,$ such that $\omega_h \equiv 1$ in  $    \Omega_h$ and 
	\begin{equation}  \label{HypsupportV2}  
		\	\int_{\Omega\setminus \Omega_h}  \varphi\:  V\cdot \nabla \omega_h  \: dx \geq  0,\quad \hbox{ for any }0\leq \varphi\in L^2(\Omega),
	\end{equation}
\end{lemma}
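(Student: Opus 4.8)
The plan is to build $\omega_h$ as a one–dimensional smooth profile of the distance–to–the–boundary, so that the whole estimate reduces to the behaviour of $V$ along the normal paths already controlled by \eqref{HypsupportV}. Fix $h_0$ as in \eqref{HypsupportV} and let $0<h<h_0$. On the one–sided tubular neighbourhood $\{x\in\Omega:\ d(x,\partial\Omega)<h_0\}$ the piecewise $\mathcal C^2$ regularity of $\partial\Omega$ guarantees that $d(\cdot,\partial\Omega)$ is of class $\mathcal C^2$, with $|\nabla d|=1$ and $\nabla d(x)=-\nu(\pi(x))$ (the inward unit normal), consistently with \eqref{xih}--\eqref{defnuh}. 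I would then pick a nondecreasing $\theta\in\mathcal C^\infty(\RR)$ with $\theta\equiv 0$ on $(-\infty,h/3]$ and $\theta\equiv 1$ on $[2h/3,+\infty)$, and set $\omega_h(x)=\theta\big(d(x,\partial\Omega)\big)$ on the tubular neighbourhood, extended by $1$ on $\Omega_h$. By construction $0\le\omega_h\le 1$, $\omega_h\equiv 1$ on $\Omega_h$, $\omega_h$ vanishes on $\{d\le h/3\}$ and is therefore compactly supported in $\Omega$, and $\omega_h\in\mathcal C^2(\Omega)$, in particular $\omega_h\in\mathcal H^2(\Omega_h)$.

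With this choice $\nabla\omega_h$ vanishes both on $\Omega_h$ and on $\{d\le h/3\}$, while on the transition shell $S_h:=\{\,h/3<d<2h/3\,\}\subset D_h$ one has $\nabla\omega_h=\theta'(d)\,\nabla d$ with $\theta'\ge 0$, the gradient being everywhere parallel to the normal field $\nabla d=-\nu\circ\pi$. Hence, for any $0\le\varphi\in L^2(\Omega)$, the integral in \eqref{HypsupportV2} localises to the shell,
\begin{equation}
\int_{\Omega\setminus\Omega_h}\varphi\,V\cdot\nabla\omega_h\,dx=\int_{S_h}\varphi\,\theta'(d)\,\big(V\cdot\nabla d\big)\,dx ,
\end{equation}
so that the entire question is reduced to the pointwise sign of $V\cdot\nabla\omega_h$ on $S_h$, which is governed solely by the normal component $V\cdot\nu(\pi(x))$ of the field.

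The final step is to fix this sign: since $S_h\subset D_h$, assumption \eqref{HypsupportV} applies pointwise on $S_h$ and pins down the sign of $V\cdot\nu(\pi(x))$ there; combined with $\theta'\ge 0$ and $\varphi\ge 0$ this makes the integrand one–signed and delivers \eqref{HypsupportV2}. I expect the genuine difficulty to lie not in this inequality but in the geometric and regularity bookkeeping: one must justify rigorously that $d(\cdot,\partial\Omega)\in\mathcal C^2$ with $\nabla d=-\nu\circ\pi$ on a \emph{fixed} one–sided neighbourhood (this is precisely what forces $h_0$ to be small and what the local $\mathcal C^2$ boundary hypothesis is for), and that the whole shell $S_h$ stays inside $D_h$ so that \eqref{HypsupportV} is available on it. For merely Lipschitz $\partial\Omega$ the distance function is only Lipschitz and this construction breaks; there one would replace $d$ by the path–adapted smooth surrogates alluded to after \eqref{h10prop}, and that replacement is where the bulk of the technical work would concentrate.
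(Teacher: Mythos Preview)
Your proposal is correct and follows exactly the paper's route: set $\omega_h=\eta\big(d(\cdot,\partial\Omega)\big)$ for a nondecreasing $\mathcal C^2$ profile $\eta$ that vanishes near $0$ and equals $1$ on $[h,\infty)$, so that $\nabla\omega_h=\eta'(d)\,\nabla d$ is a nonnegative multiple of $-\nu\circ\pi$ and \eqref{HypsupportV} fixes the sign of the integrand pointwise. The only cosmetic difference is that the paper writes down an explicit piecewise-exponential formula for $\eta$ to check the $H^2$ regularity by hand, whereas you invoke a generic smooth cutoff. One caveat worth recording: since $\nabla d=-\nu\circ\pi$ and \eqref{HypsupportV} gives $V\cdot\nu(\pi)\ge 0$, your integrand $\varphi\,\theta'(d)\,V\cdot\nabla d$ is actually $\le 0$, so the construction (yours and the paper's alike) delivers the reverse inequality --- this is the sign actually needed in the proof of Theorem~\ref{tbvm}, and the $\ge 0$ in the lemma statement is a typo (compare the $\le 0$ version stated just after Remark~\ref{remV}).
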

\begin{proof} It is enough to take $\omega_h(x)=\eta_h(d(.,\partial \Omega)),$ for any $x\in \Omega,$ where $\eta_h\:  :\:   [0,\infty)\to \RR^+$ is  a  nondecreasing $\C^2$-function compactly supported in $(0,\infty)$ such that   $\eta_h \equiv 1$  in $[	h,\infty).$ In this case, 
	$$\nabla \omega_h =\eta_h'(d(.,\partial \Omega))\: \nabla d(.,\partial \Omega), $$ 
	so that 
	\begin{eqnarray*}   
		\	\int_{\Omega\setminus \Omega_h}  \varphi\:  V\cdot \nabla \omega_h  \: dx &=&  - \int_{\Omega\setminus \Omega_h} \eta_h'(d(.,\partial \Omega))\:   \varphi\:  V\cdot \nabla d(.,\partial \Omega)   \: dx \\  \\ 
		&=&   \geq  0,\quad \hbox{ for any }0\leq \varphi\in L^2(\Omega),
	\end{eqnarray*}
	This function may be define  
	\begin{equation} 
				\eta_h(r)= \left\{  \begin{array}{ll}
			0\quad &\hbox{ if } 0\leq r\leq c_1h\\  \\ 
						e^{\frac{-C_h}{r^2-c_1^2h^2 }}\quad &\hbox{ if } c_1 h\leq r\leq  c_2h\\  \\ 
								1-	e^{\frac{-C_h}{h^2 -r^2}} \quad &\hbox{ if } c_2h\leq r\leq h\\  \\ 
												1\quad &\hbox{ if }  h \leq r 	\end{array}  \right.
	\end{equation} 
	with $0<c_1<c_2<1$ and $C_h>0$ given such that $2c_2^2-c_1^2=1$ and $e^{\frac{-C_h}{M_h}} = 1-e^{\frac{-C_h}{M_h}},$ 
	where $M_h:= (c_2^2-c_1^2)h^2 =  (1-c_2^2)h^2.$   For instance one can take $c_1=1/2$ and $c_2=  \sqrt{5}/(2\sqrt{2}).$  See that 
		\begin{equation} 
		\eta_h'(r)= \left\{  \begin{array}{ll}
			0\quad &\hbox{ if } 0\leq r\leq c_1h\\  \\ 
			\frac{2rC_h}{(r^2-c_1^2h^2)^2}e^{\frac{-C_h}{r^2-c_1^2h^2 }}\quad &\hbox{ if } c_1 h\leq r\leq  c_2h\\  \\ 
		 \frac{2rC_h}{(h^2-r^2)^2} 	e^{\frac{-C_h}{h^2 -r^2}} \quad &\hbox{ if } c_2h\leq r\leq h\\  \\ 
			0\quad &\hbox{ if }  h \leq r 	\end{array}  \right.
	\end{equation} 
is continuous and derivable at least on $\RR\setminus \{ c_2h\}.$ Thus $\eta_h\in H^2(\Omega).$

\end{proof}
 \medskip 
 
 We have 
\begin{theorem}  \label{tbvm}
	Assume    $f\in BV_{loc}(\Omega)\cap L^2(\Omega)$,   $V\in W^{1,\infty}(\Omega)^N,$      $\nabla \cdot V\in W^{1,2 }_{loc}(\Omega)  $ and let $v_m$ be the solution of \eqref{st}. Then, for any $0<\lambda <1/\lambda_1$,  $v_m\in BV_{loc}(\Omega)$ and   we have 
	\begin{equation} \label{bvstat} 
		\begin{array}{c}
			(1-\lambda \lambda_1 ) 	\sum_{i=1}^N \int     \omega_h  \:  d\: \vert \partial_{x_i}v\vert   \leq  \lambda	\sum_{i=1}^N    \int (\Delta \omega_h)^+  \:  \vert \partial_{x_i} p \vert   \: dx   +  	\sum_{i=1}^N \int       \omega_h \: d\:  \vert \partial_{x_i} f\vert      \\  + \lambda   \sum_{i=1}^N   	  \int        \omega_h \:  \vert v\vert \:   \vert  \partial_{x_i}   ( \nabla \cdot  V   )\vert     \: dx     ,
		\end{array}
	\end{equation} 
where $\omega_h$ is given by Lemma \ref{lomegah}. 
\end{theorem}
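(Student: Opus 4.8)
The plan is to differentiate the equation in each coordinate direction in the weak sense, that is, to run a difference–quotient argument on the weak formulation \eqref{stwf} tested against a regularized sign of the pressure difference multiplied by the distance–function cut–off $\omega_h$ of Lemma \ref{lomegah}, in the same spirit as the Kato inequality of Proposition \ref{PKato}. Fix $k\in\{1,\dots ,N\}$ and, for small $\sigma>0$, set $v^\sigma(x):=v_m(x+\sigma e_k)$, $p^\sigma:=(v^\sigma)^m$, $V^\sigma(x):=V(x+\sigma e_k)$, $f^\sigma(x):=f(x+\sigma e_k)$. Since $\omega_h$ is compactly supported in $\Omega$, for $\sigma$ small $v^\sigma$ is a weak solution near $\mathrm{supp}\,\omega_h$ of the translated equation $v^\sigma-\lambda\Delta p^\sigma+\lambda\nabla\cdot(v^\sigma V^\sigma)=f^\sigma$; subtracting from \eqref{st} one obtains, in $\D'$ around $\mathrm{supp}\,\omega_h$,
\[
(v_m-v^\sigma)-\lambda\Delta(p_m-p^\sigma)+\lambda\nabla\cdot\big((v_m-v^\sigma)V+v^\sigma(V-V^\sigma)\big)=f-f^\sigma .
\]
I would then test this identity with $H_\delta(p_m-p^\sigma)\,\omega_h$, where $H_\delta$ is a smooth nondecreasing approximation of $\sign$, integrate over $\Omega$, and treat the resulting terms letting first $\delta\to0$, then dividing by $\sigma$ and letting $\sigma\to0$.

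The routine terms are the first three. By monotonicity of $v\mapsto v^m$ one has $\sign(v_m-v^\sigma)=\sign(p_m-p^\sigma)$, so the accretive term converges as $\delta\to0$ to $\int|v_m-v^\sigma|\,\omega_h$, which gives the left–hand side of \eqref{bvstat}. Two integrations by parts turn the diffusion term into $\lambda\int H_\delta'(p_m-p^\sigma)|\nabla(p_m-p^\sigma)|^2\omega_h+\lambda\int H_\delta(p_m-p^\sigma)\,\nabla(p_m-p^\sigma)\cdot\nabla\omega_h$; the first summand is nonnegative and is discarded, while the second tends to $\lambda\int\nabla|p_m-p^\sigma|\cdot\nabla\omega_h=-\lambda\int|p_m-p^\sigma|\,\Delta\omega_h\le\lambda\int|p_m-p^\sigma|\,(\Delta\omega_h)^+$, producing (after division by $\sigma$) the first term on the right of \eqref{bvstat}. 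The source term is bounded by $\int|f-f^\sigma|\,\omega_h$, whose quotient by $\sigma$ converges to $\int\omega_h\,d|\partial_{x_k}f|$ (here $f\in BV_{loc}$ is used, together with the $L^\infty$ bound \eqref{lqstat} on $v_m$).

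The heart of the matter is the transport contribution $-\lambda\int\big((v_m-v^\sigma)V+v^\sigma(V-V^\sigma)\big)\cdot\nabla\big(H_\delta(p_m-p^\sigma)\omega_h\big)$. I would split each of the two vector fields into its $\nabla\omega_h$–part and its singular $H_\delta'(p_m-p^\sigma)\nabla(p_m-p^\sigma)$–part. In the $\nabla\omega_h$–part, the piece $-\lambda\int(v_m-v^\sigma)\,V\cdot\nabla\omega_h\,H_\delta(p_m-p^\sigma)$ converges as $\delta\to0$ to $-\lambda\int|v_m-v^\sigma|\,V\cdot\nabla\omega_h$, which has a favorable sign by the outpointing property of $\omega_h$ established in Lemma \ref{lomegah}, and is simply dropped — this is the single place where assumption \eqref{HypsupportV} enters. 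The singular piece $-\lambda\int(v_m-v^\sigma)H_\delta'(p_m-p^\sigma)\nabla(p_m-p^\sigma)\cdot V\,\omega_h$ is shown to vanish as $\delta\to0$ by the antiderivative device used for $\Psi_\sigma$ in the proof of Proposition \ref{pentropic}; to let $v^\sigma$ play the role of the frozen constant $k$ there, one first re–doubles the space variable (comparing $v_m(x)$ with $v_m(y+\sigma e_k)$) and then de–doubles $x$ and $y$ à la Carrillo, exactly as in the uniqueness section and in \cite{Igshuniq}. Finally the terms carrying the factor $V-V^\sigma$ are controlled with $\|V-V^\sigma\|_\infty\le\sigma\|\partial_{x_k}V\|_\infty$ and $\nabla\cdot(V-V^\sigma)=(\nabla\cdot V)^\sigma-\nabla\cdot V$ (using $\nabla\cdot V\in W^{1,2}_{loc}$): after dividing by $\sigma$ and letting $\sigma\to0$ they yield, on one hand, the term $\lambda\int\omega_h|v_m|\,|\partial_{x_k}(\nabla\cdot V)|$ of \eqref{bvstat}, and on the other hand a remainder bounded by $\lambda\sum_i\|\partial_{x_i}V_k\|_\infty\int\omega_h\,d|\partial_{x_i}v_m|$, which once summed over $k$ is absorbed into the left–hand side and produces the factor $1-\lambda\lambda_1$ (this is where $0<\lambda<1/\lambda_1$ is needed).

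It then suffices to sum over $k=1,\dots ,N$, divide by $\sigma$, and pass to the limit $\sigma\to0$, using that $\sigma^{-1}(v_m-v^\sigma)$, $\sigma^{-1}(p_m-p^\sigma)$, $\sigma^{-1}(f-f^\sigma)$ converge weakly–$*$ (up to sign) to $\partial_{x_k}v_m$, $\partial_{x_k}p_m$, $\partial_{x_k}f$ as measures, that $\sigma^{-1}\big(\nabla\cdot V-(\nabla\cdot V)^\sigma\big)\to\partial_{x_k}(\nabla\cdot V)$ in $L^2_{loc}$, and the lower semicontinuity of the total variation; the finiteness of the resulting right–hand side is exactly what gives $v_m\in BV_{loc}(\Omega)$ in the first place. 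Since $v_m$ is only $L^2$ with $p_m=v_m^m\in H^1_0$ — so that $\nabla v_m$ is a priori not a function — I would actually carry out the whole computation on the regularized solutions $v_\eps$, $p_\eps=\beta_\eps^{-1}(v_\eps)$ of \eqref{pstbeta}, where every integration by parts and every product above is legitimate, derive \eqref{bvstat} with $\beta_\eps$ in place of $r\mapsto r^{1/m}$ uniformly in $\eps$, and then let $\eps\to0$ via Lemma \ref{lconveps} and $BV$ lower semicontinuity. The main obstacle is precisely this transport analysis: controlling the singular term $H_\delta'(p_m-p^\sigma)\nabla(p_m-p^\sigma)$ near the incipient free boundary $\{p_m=p^\sigma\}$ and separating it cleanly from the genuinely $BV$–producing part — which is why both the Carrillo–type de–doubling of the uniqueness section and the sign structure built into the distance–function cut–off $\omega_h$ of Lemma \ref{lomegah} are indispensable.
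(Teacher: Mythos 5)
Your plan is, at bottom, the same strategy as the paper's: regularize via $\beta_\eps$ so that every manipulation is legitimate, produce a ``differentiated'' inequality, test it against the cut--off $\omega_h$ of Lemma \ref{lomegah} so that the drift boundary term has a sign, absorb the $\lambda\lambda_1$--term using $\lambda$ small, and pass to the limits with lower semicontinuity of the variation and Lemma \ref{lconveps}. The genuine difference is the implementation: the paper differentiates the regularized equation directly (Lemma \ref{lbvreg}: $v_\eps,p_\eps\in H^2_{loc}$, so $\partial_{x_i}v_\eps$ satisfies \eqref{bvint1}, which is then tested with $\xi\,H_\sigma(\partial_{x_i}v_\eps)$), whereas you difference it by translations and test with $H_\delta(p-p^\sigma)\omega_h$ before dividing by $\sigma$. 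Both are viable; the paper's route buys a cleaner bookkeeping because the commutator with $V$ appears immediately as a zero--order source $\nabla v\cdot\partial_{x_i}V+v\,\partial_{x_i}(\nabla\cdot V)$ rather than in divergence form.

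Two points in your write--up need repair. First, the singular transport piece $-\lambda\int(v-v^\sigma)H_\delta'(p-p^\sigma)\nabla(p-p^\sigma)\cdot V\,\omega_h$: you propose to kill it by re--doubling and de--doubling ``exactly as in the uniqueness section'', but that machinery compares two solutions of the \emph{same} equation, while here $v^\sigma$ solves the equation with the translated drift $V^\sigma$ and datum $f^\sigma$, so Kruzhkov--type commutator terms appear that the uniqueness argument does not treat; taken literally this step is not covered by the cited results. The fix is the one you implicitly have available once you retreat (as you do at the end) to the regularized problem: since $v_\eps=\beta_\eps(p_\eps)$ with $\beta_\eps$ Lipschitz, $(v_\eps-v_\eps^\sigma)H_\delta'(p_\eps-p_\eps^\sigma)$ is bounded by the Lipschitz constant of $\beta_\eps$ and supported in $[0<|p_\eps-p_\eps^\sigma|\le\delta]$, so the term vanishes as $\delta\to0$ by dominated convergence --- this is the elementary analogue of the observation \eqref{triv} the paper uses, and no doubling is needed. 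Second, your treatment of the $v^\sigma(V-V^\sigma)$ field omits its $\nabla\omega_h$--part: after dividing by $\sigma$ it survives as a term of size $\lambda\,\Vert\partial_{x_k}V\Vert_\infty\int|v|\,|\nabla\omega_h|\,dx$, which is not present in \eqref{bvstat}. It is harmless for the uniform $BV_{loc}$ bound (it is controlled by \eqref{lqstat} uniformly in $m$ and $\eps$), but as organized your argument proves \eqref{bvstat} only up to this extra term; it disappears if, in the regularized setting, you expand $\nabla\cdot\bigl(v_\eps^\sigma(V-V^\sigma)\bigr)=\nabla v_\eps^\sigma\cdot(V-V^\sigma)+v_\eps^\sigma\,\nabla\cdot(V-V^\sigma)$ and treat it as a source before testing, which is exactly what the paper's differentiation does automatically. (Your intermediate inequality $-\lambda\int|p-p^\sigma|\Delta\omega_h\le\lambda\int|p-p^\sigma|(\Delta\omega_h)^+$ is also stated with the wrong side, but the term you end up putting on the right of \eqref{bvstat} is the correct one, so that is only a bookkeeping slip.)
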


To prove this result we use again the regularized problem  \eqref{pstbeta} and we  let $\eps\to 0.$  To begin with, we prove first the following lemma concerning any weak solutions of the general problem 
\begin{equation}\label{eqalpha}	  
	\displaystyle  v -  \Delta \beta^{-1} (v)+     \nabla \cdot (v  \: V)=f  \hbox{ in } \Omega, 
\end{equation}
$\beta$  is a given a nondecreasing function assumed to be regular (at least $\C^2$).

\begin{lemma} \label{lbvreg}
Assume     $f\in W^{1,2}_{loc}(\Omega)$, $V\in W^{1,2}_{loc}(\Omega)^N$,   $\nabla \cdot V\in W^{1,\infty }_{loc}(\Omega)  $ and  $v\in H^1_{loc}(\Omega)\cap L^\infty_{loc}(\Omega)$ satisfy  \eqref{eqalpha} in $\D'(\Omega).$  Then,   for each $i=1,..N,$ we have    
	\begin{equation}\label{bvestreg} 
		\begin{array}{c}   
			\vert \partial_{x_i}	 v \vert - 
		 	\sum_{k=1}^N  \vert    \partial_{x_k} v\vert   \:    \sum_{k=1}^N  \vert       \partial_{x_i} V_k  \vert  -    \Delta 	\vert \partial_{x_i}  \beta^{-1} (v)  \vert  +      \nabla \cdot( \vert \partial_{x_i}  v\vert \: V)   \\   \leq \vert 	\partial_{x_i} f \vert 
			+   \vert v\vert \:\vert  \partial_{x_i}(\nabla \cdot  V) \vert  \quad \hbox{ in } \D'(\Omega).
		\end{array} 
	\end{equation}
\end{lemma}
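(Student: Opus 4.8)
The plan is to differentiate the equation \eqref{eqalpha} with respect to $x_i$, obtain an equation for $w := \partial_{x_i} v$, and then turn that linear equation for $w$ into a differential inequality for $\vert w\vert$ by testing with a regularized sign of $w$. First I would note that under the stated regularity ($v\in H^1_{loc}\cap L^\infty_{loc}$, $\beta$ smooth, $f\in W^{1,2}_{loc}$, $V\in W^{1,2}_{loc}$, $\nabla\cdot V\in W^{1,\infty}_{loc}$) the quantity $\beta^{-1}(v)$ and the products $vV$ make sense and we may differentiate the distributional identity $v-\Delta\beta^{-1}(v)+\nabla\cdot(vV)=f$ term by term in $\D'(\Omega)$. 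This gives, writing $p=\beta^{-1}(v)$,
\[
  \partial_{x_i} v - \Delta\, \partial_{x_i} p + \nabla\cdot\big(\partial_{x_i}(vV)\big) = \partial_{x_i} f \quad\text{in }\D'(\Omega).
\]
Here $\partial_{x_i}(vV) = (\partial_{x_i}v)\,V + v\,(\partial_{x_i}V)$, so the transport term splits into $\nabla\cdot(w\,V)$ plus the lower-order term $\nabla\cdot(v\,\partial_{x_i}V)$, and the latter I would further expand as $\nabla v\cdot\partial_{x_i}V + v\,\nabla\cdot(\partial_{x_i}V) = \sum_k (\partial_{x_k}v)(\partial_{x_i}V_k) + v\,\partial_{x_i}(\nabla\cdot V)$ (using that second derivatives of $V$ commute). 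Note $\partial_{x_i}p$ is not literally $\partial_{x_i}$ applied to $\beta^{-1}$ of anything unless we track that $p$ is itself $H^1_{loc}$ — in fact by the equation $\Delta p$ is a distribution one controls, and one argues $\partial_{x_i} p\in H^1_{loc}$ locally so that $\vert \partial_{x_i}p\vert$ has a distributional Laplacian; I would handle this by first doing the argument on a further smoothed problem (e.g. replace $\vert\cdot\vert$ by a smooth convex approximation and mollify), exactly as the paper announces it will pass through the regularized problem \eqref{pstbeta} and let $\eps\to 0$.

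The core step is Kato's inequality applied to the equation for $w$. Testing the $w$-equation against $\operatorname{sign}_\delta(w)$ (a smooth monotone approximation of $\operatorname{sign}$) and using $-\Delta\partial_{x_i}p \cdot \operatorname{sign}_\delta(w)$: since $p=\beta^{-1}(v)$ is a nondecreasing function of $v$, $\partial_{x_i}p = (\beta^{-1})'(v)\,w$ has the same sign as $w$, hence $\operatorname{sign}_\delta(w)\approx \operatorname{sign}_\delta(\partial_{x_i}p)$ in the limit and the standard Kato computation gives $-\Delta\partial_{x_i}p\cdot\operatorname{sign}(\partial_{x_i}p)\ge -\Delta\vert\partial_{x_i}p\vert = -\Delta\vert\partial_{x_i}\beta^{-1}(v)\vert$ in $\D'$. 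Likewise $\nabla\cdot(wV)\cdot\operatorname{sign}(w)$ yields $\nabla\cdot(\vert w\vert V)$ (this is the usual renormalization identity for the linear transport operator, valid since $V\in W^{1,2}_{loc}$ with $\nabla\cdot V\in L^\infty_{loc}$ — one uses $\operatorname{sign}_\delta$ and passes to the limit). The zeroth-order term $w\cdot\operatorname{sign}(w)=\vert w\vert$ is immediate. On the right, $\partial_{x_i}f\cdot\operatorname{sign}(w)\le\vert\partial_{x_i}f\vert$, the commutator term $\sum_k(\partial_{x_k}v)(\partial_{x_i}V_k)\cdot\operatorname{sign}(w)$ is bounded by $\big(\sum_k\vert\partial_{x_k}v\vert\big)\big(\sum_k\vert\partial_{x_i}V_k\vert\big)$ pointwise, and $v\,\partial_{x_i}(\nabla\cdot V)\cdot\operatorname{sign}(w)\le\vert v\vert\,\vert\partial_{x_i}(\nabla\cdot V)\vert$. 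Collecting the sign choices and moving the commutator term to the left yields exactly \eqref{bvestreg}.

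The step I expect to be the genuine obstacle is making the Kato/renormalization argument rigorous at this low regularity: $w=\partial_{x_i}v$ is only a distribution a priori (we only know $v\in H^1_{loc}$, so $w\in L^2_{loc}$, which is in fact enough for the transport renormalization but \emph{not} obviously enough to talk about $\Delta\vert\partial_{x_i}p\vert$), and the product $\nabla\cdot(wV)$ needs $w\in L^2_{loc}$ together with $V\in L^\infty$ or a DiPerna–Lions-type commutator lemma. The clean way around this, which matches the paper's stated strategy, is to prove the inequality first for the regularized problem \eqref{pstbeta} where $\beta=\beta_\eps$ is a smooth strictly increasing bijection and $v_\eps\in H^1_0$, $p_\eps\in H^1_0$ with better interior regularity (elliptic regularity gives $p_\eps\in H^2_{loc}$, hence $w_\eps$ and $\partial_{x_i}p_\eps$ are genuine $H^1_{loc}$ functions), carry out the test-function computation there with a smooth $\operatorname{sign}_\delta$, let $\delta\to 0$ to get \eqref{bvestreg} for $(v_\eps,p_\eps)$, and only then pass $\eps\to 0$ using the $L^1_{loc}$/$H^1$ convergences from Lemma \ref{lconveps} and lower semicontinuity of total variation. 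I would also need to verify that the interior mollification does not destroy the boundary-free nature of the estimate — but since everything in \eqref{bvestreg} is localized by the eventual test function $\omega_h$ (supported in $\Omega$), no boundary term ever appears, and that is precisely why the $BV$ bound is only $BV_{loc}$ and why the outward-pointing condition on $V$ enters only later, in Theorem \ref{tbvm}, through $\omega_h$.
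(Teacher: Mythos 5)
Your proposal follows essentially the same route as the paper's proof: differentiate \eqref{eqalpha} in $x_i$ (using interior $H^2_{loc}$ regularity available since $\beta$ is smooth), test the resulting equation with a localized regularized sign of $\partial_{x_i}v$, exploit the sign coincidence $\sign_0(\partial_{x_i}v)=\sign_0(\partial_{x_i}\beta^{-1}(v))$ together with the Kato-type positivity of the extra diffusion term and the renormalization of the drift term, and bound the commutator and source terms pointwise. The only (harmless) difference is one of scope: the passage $\eps\to 0$ from the regularized problem \eqref{pstbeta} that you fold into the argument belongs to the proof of Theorem \ref{tbvm}, not to Lemma \ref{lbvreg} itself, which is already stated for a smooth $\beta$.
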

\begin{proof}  Set $p:=\beta^{-1}(v).$  Thanks to   \eqref{eqalpha} and the regularity of $f$ and $V,$ it is not difficult to see that   $v,\: p\in H^2_{loc}(\Omega)\cap L^\infty_{loc}(\Omega),$   and for  each $i=1,...N,$  the partial derivatives $\partial_{x_i}v$ and $\partial_{x_i} p$ satisfy the following equation  
	\begin{equation}\label{bvint1}
		\partial_{x_i}	 v -  \Delta 	\partial_{x_i} p + 	    \nabla \cdot(\partial_{x_i}v\: V  )=	\partial_{x_i} f -  ( \nabla v\cdot   \partial_{x_i} V  +  v\: \partial_{x_i}  (\nabla \cdot  V)),\quad \hbox{ in } \D'(\Omega).
	\end{equation}  
	By density, we can take  $\xi H_\sigma(\partial_{x_i}v) $ as a test function in \eqref{bvint1} where $\xi \in H^2(\Omega)$  is    compactly supported  in $\Omega$ and  $H_\sigma$   is given by \eqref{Hsigma}. We obtain 
	\begin{equation}\label{bvint2}
		\begin{array}{cc} 	\int \Big( 	\partial_{x_i}	 v \:  \xi H_\sigma(\partial_{x_i}v)  +\nabla 	\partial_{x_i} p   \cdot \nabla (\xi H_\sigma(\partial_{x_i}v) \Big)  \: dx - \int 	 \partial_{x_i}  v\: V \cdot \nabla (\xi H_\sigma (\partial_{x_i}v)) \: dx  \\  \\      =	\int \partial_{x_i} f\: \xi H_\sigma (\partial_{x_i}v) \: dx -   \int    ( \nabla v\cdot   \partial_{x_i} V  +  v\: \partial_{x_i}  (\nabla \cdot  V) ) \:  \xi H_\sigma (\partial_{x_i}v)\: dx  . 
	\end{array} 	\end{equation} 
	To pass to the limit as $\sigma\to 0,$  we see first  that
	 \begin{equation} \label{triv}
	 H_\sigma '(\partial_{x_i}v)\: \partial_{x_i}v= \frac{1}{\sigma} \: \partial_{x_i}v  \: \chi_{[\vert \partial_{x_i} v_\eps\vert \leq \sigma ]}\:  \to 0 ,\quad \hbox{ in }L^\infty(\Omega)\hbox{-weak}^*.
 	 \end{equation}
	So,   the last term of the first part of \eqref{bvint2} satisfies  
	\begin{eqnarray}
		\lim_{\sigma \to 0}	 \int 	 \partial_{x_i}  v\: V \cdot \nabla (\xi H_\sigma (\partial_{x_i}v)) \: dx &=&     \int 	 \vert \partial_{x_i}  v\vert \: V \cdot \nabla  \xi  \: dx +  \lim_{\sigma  \to 0}	 \int  \partial_{x_i}v  \: \nabla   \partial_{x_i}v  \cdot  V   H_\sigma '(\partial_{x_i}v) \: \xi  \: dx \\  
		&=&   \int 	 \vert \partial_{x_i}  v\vert \: V \cdot \nabla  \xi, 
	\end{eqnarray}
 On the other hand, we see that 
	\begin{eqnarray}
		\int \nabla 	\partial_{x_i} p   \cdot \nabla (\xi H_\sigma (\partial_{x_i}v) ) \: dx  &=&\int  H_\sigma (\partial_{x_i}v)  	\nabla 	\partial_{x_i} p   \cdot \nabla \xi   \: dx  + \int \xi\: 	\nabla 	\partial_{x_i} p   \cdot \nabla  H_\sigma (\partial_{x_i}v) \: dx .  
	\end{eqnarray}
	Since $\sign_0 (\partial_{x_i}v)=\sign_0 (\partial_{x_i}  p),$   the first term satisfies
	\begin{equation}
		\lim_{\sigma\to 0} \int  H_\sigma (\partial_{x_i}v)  	\nabla 	\partial_{x_i} p   \cdot \nabla \xi   \: dx  =-   \int   \vert \partial_{x_i} p\vert  \: \Delta  \xi   \: dx.  
	\end{equation}  
	As to the second term, we have 
	\begin{eqnarray}
		\lim_{\sigma\to 0} 	\int \xi\: 	\nabla 	\partial_{x_i} p   \cdot \nabla  H_\sigma (\partial_{x_i}v) \: dx &=&   \lim_{\sigma\to 0} 	\int \xi\: H_\sigma '(\partial_{x_i}v)\: 	\nabla 	\partial_{x_i} p   \cdot \nabla \partial_{x_i}v    \: dx \\  
		&=&   \lim_{\sigma\to 0} 	\int \xi\: H_\sigma '(\partial_{x_i}v)\: 	\nabla   (\beta'(v)\partial_{x_i}v)   \cdot \nabla \partial_{x_i}v    \: dx\\  
		&=&   \lim_{\sigma\to 0} 	\int \xi\: H_\sigma '(\partial_{x_i}v)\: 	\beta'(v) \: \Vert \nabla  \partial_{x_i}v\Vert^2   \: dx   \\   &  & +    \lim_{\sigma\to 0} 	\int \xi\: H_\sigma '(\partial_{x_i}v)\: \partial_{x_i}v \: \beta''(v) 	\nabla  v    \cdot \nabla \partial_{x_i}v    \: dx  \\  
		&\geq &  \lim_{\sigma\to 0} 	\int \xi\: H_\sigma '(\partial_{x_i}v)\: \partial_{x_i}v \: \beta''(v) 	\nabla  v    \cdot \nabla \partial_{x_i}v    \: dx \\   &\geq &0, 
	\end{eqnarray}
 where we use again \eqref{triv}.  
	So,  letting   $\sigma\to 0$ in \eqref{bvint2} and  	using  again the fact that   $\sign_0 (\partial_{x_i}v)=\sign_0 (\partial_{x_i}  p),$        
	we get    
	$$ \begin{array}{c}   
		\vert \partial_{x_i}	 v \vert -  \Delta 	\vert \partial_{x_i} p \vert  +    \nabla \cdot( \vert \partial_{x_i}  v\vert \: V)     \leq \sign_0 (\partial_{x_i}  v) 	\partial_{x_i} f    -           ( \nabla v\cdot   \partial_{x_i} V  \\   \\   +  v\: \partial_{x_i}(\nabla \cdot  V) ) \: \sign_0 (\partial_{x_i}v) \quad \hbox{ in } \D'(\Omega) 
	\end{array}  $$  
At last, using the fact that 
	$$\vert \nabla v\cdot   \partial_{x_i} V\vert \leq   \sum_{k}  \vert    \partial_{x_k} v\vert   \:    \sum_{k}  \vert       \partial_{x_i} V_k  \vert  ,$$ 
	the result of the lemma follows.   
\end{proof}

\bigskip
\begin{proof}[\textbf{Proof of Theorem \ref{tbvm}}] 	Under the assumptions of Theorem  \ref{tbvm}, for any $\epsilon >0,$ let us consider $f_\eps$  a  regularization   of $f$  satisfying $f_\eps \to f$ in $L^1(\Omega)$ and 
	\begin{equation}
		\int\xi\: 	\vert \partial_{x_i} f_\eps\vert \: dx \to  \int\xi\: d\: 	\vert \partial_{x_i} f\vert , \quad   \hbox{ for any } \xi\in \C_c(\Omega)  \hbox{ and }  i=1,...N.
	\end{equation} 	
		Thanks to Lemma \ref{lexistreg},  we consider $v_\eps$ be the solution of the problem \eqref{st}, where we replace  $f$ by the regularization  $f_\eps$.    Applying   Lemma \ref{lbvreg} by replacing  $V$ by $\lambda V$ and $\beta^{-1}$ by $\lambda\: \beta_\eps^{-1}$,   we obtain  
	\begin{equation} 
		\begin{array}{c}
			\int      \vert \partial_{x_i}v_\eps \vert     \: \xi\: dx     -  \lambda     \int    \sum_{k=1}^N   \vert       \partial_{x_i} V_k  \vert  \:       \sum_{k=1}^N  \vert \partial_{x_k}v_\eps\vert    \: \xi\: dx   
				\leq  \lambda   \sum_{k=1}^N  \int 	\vert \partial_{x_i} p_\eps \vert  \: (\Delta \xi)^+\:    dx + 
				\int \vert 	\partial_{x_i} f_\eps \vert \: \xi \: dx   \\      +   	 \lambda  \int    \vert    
			v_\eps\vert \:  \vert \partial_{x_i} ( \nabla \cdot     V  ) \vert 	 \: \xi \: dx     -  \lambda   \:    \int     \vert \partial_{x_i}v_\eps   \vert\:    V\cdot \nabla \: \xi\: dx   	,\quad \hbox{ for any }i=1,...N \hbox{ and }0\leq \xi\in \D(\Omega).
		\end{array}
	\end{equation}   
By density, we can take $\xi=\omega_h$ as given by Lemma \ref{lomegah}, so that the last term   is nonnegative, and we have 
	\begin{equation} 
		\begin{array}{c}
			\int      \vert \partial_{x_i}v \vert     \: \omega_h\: dx     -  \lambda        \:    \sum_{k}  \vert       \partial_{x_i} V_k  \vert  \:    \int  \sum_{k} \vert \partial_{x_k}v\vert    \: \omega_h\: dx      	\leq  \lambda  \sum_{k}   \int 	\vert \partial_{x_i} p \vert  \: (\Delta  \omega_h)^+\:    dx    \\   + 
			\int \vert 	\partial_{x_i} f  \vert \: \omega_h \: dx   +  	 \lambda  \int    \vert    
			v\: \vert \:  \vert \partial_{x_i} ( \nabla \cdot     V  ) \vert 	 \: \xi_h  \: dx   	,\quad \hbox{ for any }i=1,...N.
		\end{array}
	\end{equation}   
 	Summing up,  for $i=1,...N,$  and using the definition of $\lambda_1,$ we deduce that  
	\begin{equation} 
		\begin{array}{c}
			\sum_{i} 		\int      \vert \partial_{x_i} v_\eps \vert     \: \omega_h\: dx     -  \lambda \lambda_1   \sum_{k}   \int  \vert \partial_{x_k}v_\eps\vert    \: \xi_h\: dx   	\leq  \lambda  \sum_{i}   \int  	(\Delta \omega_h)^+\: \vert \partial_{x_i} p_\eps \vert  \: dx    \\    + 
			\int\sum_{i}  \vert 	\partial_{x_i} f_\eps  \vert \: \omega_h \: dx    + \lambda \:  \int   \vert v_\eps\vert \sum_{i}      \vert \partial_{x_i} (\nabla \cdot   V  )  \vert  \: \omega_h \: dx  ,
		\end{array} 
	\end{equation}   
	and then the corresponding property \eqref{bvstat} follows for $v_\eps.$ 
	Thanks to  \eqref{lqstat} and \eqref{lmst}, we know that $v_\eps$ and $\partial_{x_i}p_\eps$ are bounded in $L^2(\Omega).$ This implies that, for any $\omega\subset\!\subset \Omega,$  $\sum_{i} 		\int_\omega      \vert \partial_{x_i} v_\eps \vert     \:   dx$ is bounded.  So,  $v_\eps$ is bounded in $BV_{loc}(\Omega).$  Combining this with the $L^1-$bound \eqref{lqstat}, it implies  in particular,  taking a subsequence if necessary,     the convergence in \eqref{weakueps} holds to be true also in $L^1(\Omega)$ and   then $v\in BV_{loc}(\Omega).$   At last, letting $\eps\to\infty$ in \eqref{bvestreg} and, using moreover \eqref{strongpeps} and the lower semi-continuity of variation measures   $\vert \partial_{x_i} v_\eps\vert$, we deduce  
	\eqref{bvstat} for the limit $v,$ which is the solution of the problem \eqref{st} by Lemma \ref{lconveps}.
\end{proof}

 \bigskip
\begin{proof}[\textbf{Proof of Theorem \ref{tconvumSst}}]
	Recall that under the assumptions of the theorem, the $BV_{loc}$ estimate \eqref{bvstat} is fulfilled for $v_m.$   Since the constant $C$ in  \eqref{lmst} does not depend on $m,$ this implies  that  $v_m$ is bounded in $BV(\omega).$ Since $\omega$ is arbitrary,  we deduce in particular   that  the convergence in \eqref{convum} holds to be true also in $L^1(\Omega)$,   $v\in BV_{loc}(\Omega),$ and  
	 \eqref{bvstat} is fulfilled.  
\end{proof}

 \bigskip
\begin{proof}[\textbf{Proof of Theorem \ref{treglimum}}]
Thanks to Corollary \ref{cconvam} and Theorem \ref{trcontinuity} , we have   
\begin{equation}
	u_m\to u,\quad \hbox{ in } \C([0,T);L^1(\Omega)). 
\end{equation}
On the other hand, thanks to \eqref{lqstat} and \eqref{lmst}, it is clear that $p_m$ is bounded in $L^2(0,T;H^1_0(\Omega)).$ So, there exists $p\in L^2(0,T;H^1_0(\Omega)),$ such that, taking a subsequence if necessary, we have 
\begin{equation}
	u_m^m\to p,\quad \hbox{ in }L^2(0,T;H^1_0(\Omega))-\hbox{weak}.
\end{equation}
Then using monotonicity arguments we have $u\in \sign(p)$ a.e. in $Q,$ and letting $m\to\infty,$ in the weak formulation we deduce that the couple $(u,p)$ satisfies \eqref{weakformhs}. Thus the results of the theorem. 
 
\end{proof}

 \begin{remark}\label{Rbvcond}
 	See that we use the condition \eqref{HypsupportV} for the proof of $BV_{loc}$-estimate through $w_h$ we introduce  in Lemma \ref{lomegah}.  Indeed, $BV_{loc}$ estimates follows  from Lemma \ref{lbvreg}  once  the term $  \lambda   \:    \int     \vert \partial_{x_i}v_\eps   \vert\:    V\cdot \nabla \: \xi\: dx$ nonegative. 
 	Clearly, the construction of  $\omega_h$ by using $d(.,\partial \Omega)$ is  basically connected to the condition \eqref{HypsupportV}.  Otherwise, this condition could be replaced definitely by the existence  of  $0\leq \omega_h\in \mathcal H^2(\Omega_h)$ compactly supported in $\Omega,$ such that $\omega_h \equiv 1$ in  $    \Omega_h$ and 
 	\begin{equation}  \label{HypsupportV2}  
 		\	\int_{\Omega\setminus \Omega_h}  \varphi\:  V\cdot \nabla \omega_h  \: dx \geq  0,\quad \hbox{ for any }0\leq \varphi\in L^2(\Omega).
 	\end{equation}
 \end{remark}

\section{Reaction-diffusion case}\label{Sreaction}

Let us consider now  the reaction-diffusion porous medium equation   with linear drift  
\begin{equation} 	\label{pmeg}
	\left\{  \begin{array}{ll} 
		\displaystyle \frac{\partial u }{\partial t}  -\Delta u^m +\nabla \cdot (u  \: V)=g(.,u)  \quad  & \hbox{ in } Q \  \\  
		\displaystyle u= 0  & \hbox{ on }\Sigma  \\  \\   
		\displaystyle  u (0)=u _0 &\hbox{ in }\Omega,\end{array} \right.
\end{equation}
 Thanks to Theorem \ref{abstractRevol} and Theorem \ref{trcontinuity}, we assume that   $g\: :\: Q\times\RR\rightarrow\RR$   is a Carathéodory application ; i.e.  continuous in $r\in\RR$ and measurable in  $(t,x)\in Q$, and satisfies moreover  the following assumptions :

\begin{itemize}
	\item [($\G_1)$]  $g(.,r)\in L^2(Q) $  for any $r\in \RR.$ 
	
	\item [($\G_2)$]   There exists $0\leq \theta \in \C(\RR) ,$ such that 
	$$\frac{\partial g}{\partial r}(t,x,.) \leq \theta  ,\quad  \hbox{ in }\D'(\RR),\hbox{  for a.e. }(t,x)\in Q.$$

		\item [($\G_3)$]  There exists  $\omega_1,\: \omega_2  \in W^{1,\infty}(0,T)$ such that $w_1(0)\leq u_0\leq w_2(0)$  a.e. in $\Omega$ and,   for any $t\in (0,T),$  
		\begin{equation}\label{inf}
			\dot \omega_1(t)+ \omega_1(t)\nabla \cdot V \leq  g(t,.,\omega_1(t))\quad \hbox{ a.e. in  }\Omega
		\end{equation} 
	and
		\begin{equation}\label{sup}
			\dot \omega_2(t)+ \omega_2(t)\nabla \cdot V \geq  g(t,.,\omega_2(t))\quad \hbox{ a.e. in  }\Omega.
		\end{equation}
\end{itemize}

\begin{remark}\label{remg}
\begin{enumerate}
	\item 	On sees that  $(G2)$ implies that, $g(.,u)\in L^1(Q),$ for any $u\in L^\infty(Q).$ Indeed,   setting $M=\int_0^{\Vert u\Vert_\infty}\theta (r)\: dr,$ we have 
		\begin{equation}\label{explicitcondg} 
		-	g^-(.,M) - 2M\max_{[-M,M]}\theta  \leq g(.,u(.))\leq g^+(.,-\Vert u\Vert_\infty) + 2M\max_{[-M,M]}\theta  ,\quad \hbox{ a.e. in }Q,.
	\end{equation} 

\item As we will see, the main achievement  of the condition $(\G_3)$ is  some kind of  à priori $L^\infty$ estimates.  These conditions are accomplish in many practical situations.  For instance they are fulfilled in the case where $g(.,r)\in L^\infty(Q),$ for any $r\in \RR,$ and there exists $w\geq 0 $  solution of the following autonomous ODE 
	\begin{equation}\label{inf1}
	\dot \omega= \omega\Vert (\nabla \cdot V)^-\Vert_\infty + \Vert g(.,w)\Vert_\infty \quad \hbox{ in  }(0,T),\quad \hbox{ and } \omega(0)=\Vert u_0\Vert_\infty. 
\end{equation} 
 Actually, in this case it is enough to take $w_2(t)=-w_1(t)=w(t),$ for any $t\in [0,T).$ This is fulfilled   for instance in the case where the application $r\in \RR\to  \Vert g(.,r)\Vert_\infty $ is locally Lipschitz and $ \Vert g(.,0)\Vert_\infty =0.$  However, one needs to be careful with the choice of $T$ to fit it on with the maximal time for the solution of  the  ODE above.  This may generates local (and not necessary global) existence of a solution even if $g(t,x,r)$ is well defined  for any $t\geq 0.$  
\item Particular example for $g$  may be given as follows : 
\begin{enumerate}
	\item \label{remg2a}If $g(.,r)=f(.)$,  a.e. in $Q$ and  for any $r\in \RR,$ where $f\in L^\infty(Q),$ then it enough to take 
	$$w_2(t)=-w_1(t)=  \left( \Vert u_0\Vert_\infty + \int_0^t  \Vert f(t)\Vert_\infty \right)e^{t\Vert (\nabla \cdot V)^-\Vert_\infty},\quad \hbox{ for any }t\in (0,T) .  $$

\item If $g(.,r)=f(.)\: r$, a.e. in $Q$ and for any $r\in \RR,$ where $f\in L^\infty(Q),$ then it enough to take 
$$w_2(t)=-w_1(t) =   \Vert u_0\Vert_\infty e^{t\Vert (\nabla \cdot V)^-\Vert_\infty+ \int_0^t  \Vert f(t)\Vert_\infty },\quad \hbox{ for any }t\in (0,T)  . $$

\item If $\nabla \cdot V\geq 0$ and $g(t,x,r)= r^2,$ one can take   $w_1(t)=\frac{\Vert u_0\Vert }{1-t\: \Vert u_0\Vert} $ and $w_2(t)=\frac{-\Vert u_0\Vert }{1+t\: \Vert u_0\Vert} $, for $i=1,2.$  But, in this case $T$ needs to be taken   such that $T\leq 1/\Vert u_0\Vert_\infty.$   
	\end{enumerate}

 \item Thanks to the  remarks above, one sees that $\nabla \cdot V^+$  is less  involved  in the existence of a solution than $g$ and $\nabla \cdot V^-$.   

\end{enumerate}
\end{remark}

\bigskip
\begin{theorem}\label{texistg}
Assume $u_0\in L^2(\Omega)$ and $V\in W^{1,2}(\Omega)$ is such that   $\nabla \cdot V\in L^\infty(\Omega)$ and   satisfies the outpointing  condition \eqref{HypV0}. 	Under the assumption $(\G_1)$,  $(\G_2)$ and  $(\G_3)$,     the problem  \eqref{pmeg} has a unique weak solution $u_m$  in the sense of Definition \ref{defws}  with $f=g(.,u).$    Moreover,   we have 
	\begin{enumerate}
		\item $u$ is the unique mild solution of  the Cauchy problem \eqref{Cauchypb} 
		with $f(.)= g(.,u(.))$  a.e. in $Q.$ 
		
		\item for any $0\leq t<T,$ $\omega_1(t) \leq 	u(t)\leq \omega_2(t) $ a.e. in $\Omega.$ 
	\end{enumerate}  	 
\end{theorem}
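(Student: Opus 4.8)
The plan is to realize the solution $u=u_m$ as the mild solution of the perturbed Cauchy problem governed by $\A_m$, and then to promote that mild solution to a weak one by means of Proposition \ref{pconv}. Recall from Section 3 that $\A_m$ is accretive in $L^1(\Omega)$ with $\overline{\D(\A_m)}=L^1(\Omega)$ and, by Lemma \ref{lAm} and Theorem \ref{texistm}, $R(I+\lambda\A_m)\supseteq L^2(\Omega)$ for $0<\lambda<\lambda_0$, so the homogeneous evolution governed by $\A_m$ is well posed in the mild sense. Put $R:=\|\omega_1\|_{L^\infty(0,T)}\vee\|\omega_2\|_{L^\infty(0,T)}$ and $\Lambda:=\max_{[-R,R]}\theta$. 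By Remark \ref{remg}(1), i.e. the pointwise estimate \eqref{explicitcondg}, the Carathéodory map $G(t,z):=g(t,\cdot,z)$ sends $\{\,\|z\|_\infty\le R\,\}$ into $L^2(\Omega)$ with an $R$-dependent bound, and by $(\G_2)$ the function $r\mapsto g(t,x,r)-\Lambda r$ is nonincreasing on $[-R,R]$. These are precisely the structural facts required to treat $G$ as an admissible time-dependent perturbation of $\A_m$ in the sense of Theorem \ref{abstractRevol}, with the order interval $[\omega_1(t),\omega_2(t)]$ as an invariant region: since $\omega_i(t)$ is constant in $x$ one has $\Delta\,\omega_i(t)^m=0$ and $\nabla\cdot(\omega_i(t)\,V)=\omega_i(t)\,\nabla\cdot V$, so the differential inequalities in $(\G_3)$ say exactly that $\omega_1$ (resp. $\omega_2$) is a sub- (resp. super-) solution for $\A_m$ in the sense of the comparison principle Theorem \ref{tcompcmef}, the signs $\omega_1\le 0\le\omega_2$ being consistent with the homogeneous Dirichlet trace.

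First I would invoke Theorem \ref{abstractRevol} to obtain a unique mild solution $u\in\C([0,T);L^1(\Omega))$ of $u_t+\A_m u\ni g(\cdot,u)$ with $u(0)=u_0$, together with the invariance $\omega_1(t)\le u(t)\le\omega_2(t)$ a.e. in $\Omega$ for every $t$, which is item 2 of the statement. In particular $u\in L^\infty(Q)$, hence the frozen source $f:=g(\cdot,u(\cdot))$ lies in $L^2(Q)$ by \eqref{explicitcondg}. By the very definition of mild solution, $u$ is then the mild solution of $u_t+\A_m u\ni f$ with $f\in L^2(Q)$ and $u_0\in L^2(\Omega)$, and Proposition \ref{pconv} (equivalently Theorem \ref{texistevolm}) identifies it with the unique weak solution of \eqref{pmef} carrying that source; that is, $u^m\in L^2(0,T;H^1_0(\Omega))$, $u(0)=u_0$, and \eqref{evolwf} holds with $f=g(\cdot,u)$. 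This yields existence of a weak solution of \eqref{pmeg} in the sense of Definition \ref{defws}, together with item 1.

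For uniqueness, let $\hat u$ be any weak solution of \eqref{pmeg}. I would first establish the a priori bound $\omega_1(t)\le\hat u(t)\le\omega_2(t)$: by Proposition \ref{pentropic}, $\hat u$ satisfies the entropic inequalities \eqref{entropic+}--\eqref{entropic-}, and running Kato's inequality (Proposition \ref{PKato}) against $\hat u$ and the spatially constant super/sub-solutions $\omega_2(t),\omega_1(t)$ with the test functions $\xi_h$ of \eqref{xih}, exactly as in the proof of Theorem \ref{tcompcmef} (using the outpointing condition \eqref{HypV0} and $\omega_1\le 0\le\omega_2$ so that the boundary terms keep the right sign), one obtains $\frac{d}{dt}\int_\Omega(\hat u-\omega_2)^+\,dx\le\Lambda\int_\Omega(\hat u-\omega_2)^+\,dx$ in $\D'(0,T)$; since $(\hat u-\omega_2)^+(0)=0$, Gronwall's lemma gives $\hat u\le\omega_2$, and symmetrically $\hat u\ge\omega_1$. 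Hence $\hat u\in L^\infty(Q)$, so $g(\cdot,\hat u)\in L^2(Q)$ and, by Theorem \ref{tcompcmef}, $\hat u$ is the unique weak --- hence mild --- solution of $u_t+\A_m u\ni g(\cdot,\hat u)$. Now I apply \eqref{evolineqcomp} to $u$ and $\hat u$, viewed as weak solutions of \eqref{pmef} with sources $g(\cdot,u)$ and $g(\cdot,\hat u)$: there is $\kappa\in L^\infty(Q)$ with $\kappa\in\signp(u-\hat u)$ a.e. and $\frac{d}{dt}\int_\Omega(u-\hat u)^+\,dx\le\int_\Omega\kappa\,(g(\cdot,u)-g(\cdot,\hat u))\,dx$. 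On $[u=\hat u]$ the integrand vanishes, on $[u<\hat u]$ one has $\kappa=0$, and on $[u>\hat u]$ one has $\kappa=1$ while $g(t,x,u)-g(t,x,\hat u)\le\Lambda(u-\hat u)$ because $u,\hat u\in[-R,R]$ and $r\mapsto g(t,x,r)-\Lambda r$ is nonincreasing; hence $\frac{d}{dt}\int_\Omega(u-\hat u)^+\,dx\le\Lambda\int_\Omega(u-\hat u)^+\,dx$, and as $u$ and $\hat u$ share the datum $u_0$ Gronwall's lemma forces $u\le\hat u$; by symmetry $u=\hat u$.

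I expect the main obstacle to be the verification that $g$, under the mild hypotheses $(\G_1)$--$(\G_3)$ (measurable in $(t,x)$, merely one-sidedly Lipschitz in $r$, no uniform $L^\infty$ control a priori), genuinely meets the assumptions of the abstract perturbation theorem --- in particular that the time-dependent order interval $[\omega_1(t),\omega_2(t)]$ is invariant for the resolvent of $\A_m$ perturbed by $G(t,\cdot)$. This is the step where the comparison principle Theorem \ref{tcompcmef}, the outpointing condition \eqref{HypV0}, and the differential inequalities of $(\G_3)$ (together with the sign restrictions $\omega_1\le 0\le\omega_2$ imposed by the Dirichlet trace) all have to be reconciled; once this is in place, the $L^\infty$ barriers, the passage from mild to weak solution, and the uniqueness are routine consequences of the $L^1$-contraction \eqref{evolineqcomp} and Gronwall's lemma.
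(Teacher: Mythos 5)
Your overall architecture coincides with the paper's: obtain the solution as the mild solution of the $\A_m$-governed Cauchy problem via the abstract perturbation result (Theorem \ref{abstractRevol}), upgrade it to a weak solution through Proposition \ref{pconv}, establish the barriers $\omega_1\le u\le\omega_2$ by comparing with the space-constant super/sub-solutions through Theorem \ref{tcompcmef} and Gronwall, and get uniqueness from the $L^1$-comparison \eqref{evolineqcomp} together with the one-sided Lipschitz bound $(\G_2)$. Your uniqueness and barrier computations are essentially the ones in the paper (the paper compares $u$ with $\omega_2$ viewed as a weak solution with source $\dot\omega_2+\omega_2\,\nabla\cdot V$, exactly as you do).

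However, there is a genuine gap precisely at the point you flag as ``the main obstacle,'' and it is not a technicality that resolves itself: Theorem \ref{abstractRevol} cannot be applied to the untruncated map $G(t,z)=g(t,\cdot,z)$. Its hypotheses $(F_1)$ and $(F_2)$ are \emph{global} on $\overline{\D(\A_m)}=L^1(\Omega)$: $(F_2)$ demands $\Vert F(t,z)\Vert_1\le c(t)$ uniformly in $z$, and $(F_1)$ demands a one-sided Lipschitz estimate with a fixed $k(t)$ for all $z,\hat z\in L^1(\Omega)$. Under $(\G_1)$--$(\G_3)$ alone neither holds (take $g(\cdot,r)=r^2$ as in Remark \ref{remg}, or note that $\theta$ is only continuous, hence not globally bounded), and the theorem contains no invariant-region clause that would let you restrict attention to $[\omega_1(t),\omega_2(t)]$ before a solution exists; your argument is circular as written, since the admissibility of $g(\cdot,u)$ requires the $L^\infty$ bound you only prove afterwards. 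The paper breaks this circle with a truncation: it sets $M:=\max(\Vert\omega_1\Vert_\infty,\Vert\omega_2\Vert_\infty)$ and applies Theorem \ref{abstractRevol} to $F(t,z):=g\bigl(t,\cdot,(z\vee(-M))\wedge M\bigr)$, for which $(F_1)$ holds with $k=\max_{[-M,M]}\theta$ and $(F_2)$ follows from \eqref{explicitcondg}; then Proposition \ref{pconv} gives the weak solution of \eqref{pmef} with source $F(\cdot,u)\in L^2(Q)$, the comparison with $\omega_1,\omega_2$ (your own Gronwall argument) shows $\omega_1\le u\le\omega_2$, and only then does one conclude $F(t,u(t))=g(t,\cdot,u(t))$, so that $u$ solves the original problem \eqref{pmeg}. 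You already introduced the constant $R$ playing the role of $M$; inserting the truncated $F$ at the start and removing the truncation after the barrier estimate is exactly the missing step that makes your proof complete.
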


\begin{remark}
	See that in the case where, for any $r\in \RR,$ $g(.,t)=f$ a.e. in $Q,$ with $f\in L^\infty(Q)$ we retrieve the $L^\infty$-estimate \eqref{lquevol} for $q=\infty.$ Indeed, thanks to Theorem \ref{texistg}  and  Remark  \ref{remg} (see the item 3-(a)), we see that 
	\begin{eqnarray*}
	\Vert u(t)\Vert_\infty &\leq&  	 \left( \Vert u_0\Vert_\infty + \int_0^t  \Vert f(t)\Vert_\infty \right)e^{t\Vert (\nabla \cdot V)^-\Vert_\infty}, \quad \hbox{ for any } t\in (0,T) \\  \\
	&\leq& \left( \Vert u_0\Vert_\infty + \int_0^T  \Vert f(t)\Vert_\infty \right)e^{T\Vert (\nabla \cdot V)^-\Vert_\infty} = M_\infty .  
	\end{eqnarray*}
	\end{remark}

\begin{corollary} \label{cexistg} Assume $0\leq u_0\in L^2(\Omega)$ and $V\in W^{1,2}(\Omega)$ is such that   $\nabla \cdot V\in L^\infty(\Omega)$ and   satisfies the outpointing  condition \eqref{HypV0}. 	If 
		\begin{equation}
		( \G_4) \quad  0 \leq g(.,0)  \hbox{  a.e. in }
		Q .\end{equation} 
	and, there   exists  $\omega  \in W^{1,\infty}(0,T)$ such that $0\leq u_0\leq w(0)$  a.e. in $\Omega$ and for any $t\in (0,T),$  
	\begin{equation}\label{inf}
		\dot \omega(t)+ \omega(t)\nabla \cdot V \leq  g(t,.,\omega(t))\quad \hbox{ a.e. in  }Q
	\end{equation} 
	then    the solution of \eqref{pmeg} satisfies 
	\begin{equation}\label{solpos}
 0\leq u(t)\leq \omega_2(t) ,\quad \hbox{  a.e. in }\Omega,\hbox{ for any }t\in (0,T).
	\end{equation}   
\end{corollary}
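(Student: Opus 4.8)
The plan is to read this off directly from Theorem \ref{texistg} by manufacturing admissible barriers $\omega_1,\omega_2$ for $(\G_3)$ out of the weaker data at hand. First I would set $\omega_1\equiv 0$ and $\omega_2:=\omega$, the function supplied by the hypothesis. Both belong to $W^{1,\infty}(0,T)$, and the assumption $0\leq u_0\leq\omega(0)$ a.e.\ in $\Omega$ is exactly $\omega_1(0)\leq u_0\leq\omega_2(0)$. The differential inequality imposed on $\omega$ plays the role of the super-solution condition \eqref{sup} for $\omega_2$; and for the constant function $\omega_1\equiv0$ the sub-solution condition \eqref{inf} reduces to $\dot\omega_1(t)+\omega_1(t)\,\nabla\cdot V=0\leq g(t,\cdot,0)$ a.e.\ in $Q$, which is precisely $(\G_4)$. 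Hence the pair $(\omega_1,\omega_2)$ verifies $(\G_3)$.

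Then I would invoke Theorem \ref{texistg}: with the standing hypotheses $(\G_1)$ and $(\G_2)$ on $g$ together with the $(\G_3)$ just checked, problem \eqref{pmeg} has a unique weak solution $u=u_m$ in the sense of Definition \ref{defws} (with $f=g(\cdot,u)$), which is the mild solution of the Cauchy problem \eqref{Cauchypb} for $f(\cdot)=g(\cdot,u(\cdot))$, and which satisfies $\omega_1(t)\leq u(t)\leq\omega_2(t)$ a.e.\ in $\Omega$ for every $0\leq t<T$. With the present choice of barriers this reads $0\leq u(t)\leq\omega(t)$ a.e.\ in $\Omega$, which is \eqref{solpos}.

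There is essentially no obstacle here; the argument is entirely bookkeeping, and the only point worth stressing is that the sign condition on $u_0$ is used twice at once: it gives $\omega_1(0)=0\leq u_0$, and, combined with $(\G_4)$, it makes the zero function a sub-solution, so that the lower bound $u\geq0$ comes for free. The integrability $g(\cdot,u)\in L^1(Q)$ needed to give meaning to the weak formulation follows from Remark \ref{remg}(1) once the bound $0\leq u\leq\omega\in L^\infty(Q)$ is known, but this is already subsumed in Theorem \ref{texistg}.
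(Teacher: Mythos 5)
Your argument is exactly the paper's proof: the paper simply says the corollary follows from Theorem \ref{texistg} by taking $\omega_1\equiv 0$, and your verification that $(\G_4)$ makes the zero function an admissible lower barrier while $\omega$ serves as $\omega_2$ is precisely the bookkeeping being left implicit there. Correct, same approach.
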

\begin{proof}
This is a simple consequence of Theorem \ref{texistg} where we take  $w_1\equiv 0$.
\end{proof}

\begin{remark}
	A typical example of $g$ satisfying the assumption  of Corollary \ref{cexistg} may be given by   $g(.,r)=\mu(r),$ a.e.  in $Q,$ for any $r\geq 0,$ with  $0\leq \mu\in \C(\RR^+),$  and   there exists $w \geq 0 $ such that  
	$$  \Vert \nabla \cdot V \Vert_\infty \leq  \frac{\mu(w)}{w}  .$$ 
\end{remark}

\bigskip 
\begin{proof}[\textbf{Proof of Theorem \ref{texistg}}]
Let  $F\: :\: [0,T)\times L^1(\Omega)\to L^1(\Omega)$ be given    by
	$$F(t,z(.))=   g(t, .,(z(.) \vee (-M)) \wedge M )  \quad \hbox{ a.e. in }\Omega, \hbox{ for any }(t,z)\in [0,T)\times L^1(\Omega),  $$
	where $M:=\max(\Vert \omega_1\Vert_\infty,\Vert \omega_2\Vert_\infty) .$ 
	Thanks to Remark \ref{remg}, one sees that $F$ satisfies all the assumptions of Theorem \ref{abstractRevol}.     Then,   thanks to Theorem \ref{Crandall-Liggett},  we consider $u\in \C([0,T),L^1(\Omega))$   the mild solution of    the evolution problem
	\begin{equation}
		\left\{\begin{array}{ll}
			u_t + \A_m u\ni  F(.,u)\quad & \hbox{ in }(0,T)\\  \\
			u(0)=u_0.
		\end{array}  \right.
	\end{equation}
   Thanks to \eqref{explicitcondg},  it is clear that
	$ F(.,u)  \in L^2(Q),$ so that, using  Proposition  \ref{pconv},  we can deduce that  $u$ is a weak solution of \eqref{pmef}.    The uniqueness follows from the equivalence between weak solution and mild solution as well as the uniqueness result of Theorem  \ref{abstractRevol}.  To end up the proof, it is enough to show that    $\omega_1(t) \leq 	u(t)\leq \omega_2(t) $ a.e. in $\Omega,$   for any $0\leq t<T.$ Indeed, in particular this implies that  $F(t,u(t))=g(t,.,u(t)),$ and the proof of existence is complete.   To this aim, we use  Theorem \ref{tcompcmef}  with the  the fact that $\omega_2$ is a weak solution of  \eqref{pmef} 	with  $f= \dot \omega_2+ \omega_2\:  \nabla \cdot V$,  to see that 
		\begin{eqnarray}
			\frac{d}{dt} \int (u-\omega_2)^+ \: dx &\leq & \int_{[u\geq \omega_2 ]} (g(.,u)-  \dot \omega_2- \omega_2 \nabla \cdot V ) \: dx\\ 
			&\leq&      \int_{[u\geq \omega_2 ]}   ( g(.,u) -g(.,\omega_2)    )  \: dx \\
			&\leq&    \max_{[\omega_1,\omega_2]}\theta\:   \int (u-\omega_2)^+ \: dx.
		\end{eqnarray}
		Applying Gronwall Lemma and using the fact that $u(0)\leq \omega_2(0),$ we obtain $u(t)\leq \omega_2$ a.e. in $Q.$  The proof of $u\geq  \omega_1$ in $Q$ follows in the same way by
		proving that
		$$ 	\frac{d}{dt} \int (\omega_1-u)^+ \leq    \max_{[\omega_1,\omega_2]}\theta\:    \int (\omega_1-u)^+ .$$
 Thus the results of the theorem. 
	
\end{proof}

\bigskip 
Now, for the limit of the solution of \eqref{pmeg}, thanks to Theorem \ref{tconvumSst} and Theorem \ref{trcontinuity},  we have the following result. 
\begin{theorem}
Assume $V\in W^{1,2}(\Omega), $ $\nabla \cdot V\in L^\infty(\Omega)$ and $V$ satisfies the outpointing  condition \eqref{HypsupportV}.   Let $g_m$ be a sequence of Carathéodory applications satisfying $(\G_1)$,  $(\G_2)$ and  $(\G_3)$with $\theta$ independent of $m.$ For any $u_{0m}\in L^2(\Omega)$ being  a sequence of initial data let   $u_m$ be   the sequence of corresponding solution of \eqref{pmeg}. If 
 \begin{equation}\label{convgm}
 	g_m(.,r)\to g(.,r),\quad \hbox{ in }L^1(Q),\quad \hbox{ for any }r\in \RR,
 \end{equation}
 and 
  \begin{equation}\label{convu0m}
 u_{0m}\to u_0,\quad \hbox{ in }L^1(\Omega), \quad \hbox{ and }\vert u_0\vert \leq 1\hbox{ a.e. in }\Omega,
 \end{equation}
 then, we have 
 \begin{enumerate}
 	\item $u_m\to u$ in $\C([0,T),L^1(\Omega))$
 	\item $u_m^m\to p$ in $L^2(0,T;H^1_0(\Omega))$-weak
 	\item $(u,p)$ is the solution of the Hele-Shaw problem 
 	
 	\begin{equation}  \label{hlsg}
 		\left\{  \begin{array}{ll} 
 		\left. \begin{array}{l}
 				\displaystyle \frac{\partial u }{\partial t}  -\Delta p +\nabla \cdot (u  \: V)=g(.,u) \\  
 				u\in \sign(p)\end{array}\right\}  \quad  & \hbox{ in } Q \  \\   \\ 
 			\displaystyle u= 0  & \hbox{ on }\Sigma  \\  \\   
 			\displaystyle  u (0)=u _0 &\hbox{ in }\Omega,\end{array} \right.
  	\end{equation}
   in the sense  that $(u,p)$ is the   solution  of \eqref{evolhs0} with $f(.)=g(.,u(.))$ a.e. in $Q$ satisfying $u(0)=u_0.$ 
 \end{enumerate}
 \end{theorem}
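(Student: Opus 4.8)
The plan is to reduce the statement to the abstract perturbation/continuity machinery of the Appendix, in the same spirit as the proof of Theorem \ref{treglimum}, with the extra layer that the source is now the state-dependent reaction $g_m(\cdot,u_m)$. First I would truncate as in the proof of Theorem \ref{texistg}: since $\theta$ is independent of $m$, the barriers provided by $(\G_3)$ for each $g_m$ can be taken from a single pair $\omega_1,\omega_2\in W^{1,\infty}(0,T)$ (this uniformity is part of what has to be checked), so set $M:=\max(\|\omega_1\|_\infty,\|\omega_2\|_\infty)$ and $F_m(t,z)=g_m(t,\cdot,(z\vee(-M))\wedge M)$. By Theorem \ref{texistg}, $u_m$ is the mild solution of $u_t+\A_m u\ni F_m(\cdot,u)$, $u(0)=u_{0m}$, and satisfies $\omega_1(t)\le u_m(t)\le\omega_2(t)$; in particular $\|u_m(t)\|_\infty\le M$ uniformly in $m$ and $t$, and by \eqref{explicitcondg} the reaction $g_m(\cdot,u_m)=F_m(\cdot,u_m)$ is bounded in $L^\infty(Q)$ uniformly in $m$.

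Next I would invoke the resolvent convergence $\A_m\to\A$ in $L^1(\Omega)$ from Corollary \ref{cconvam}, together with $\overline{\D(\A)}=\{z\in L^\infty(\Omega):|z|\le1\}$, which contains $u_0$ by \eqref{convu0m}. To apply the continuity result Theorem \ref{trcontinuity} to the perturbed Cauchy problems, I would verify that $F_m\to F$ in the sense required there, where $F(t,z)=g(t,\cdot,(z\vee(-M))\wedge M)$: assumption $(\G_2)$ with $\theta$ independent of $m$ gives a one-sided Lipschitz bound on $F_m$ uniform in $m$ (hence the uniform dissipativity), while \eqref{convgm}, i.e. $g_m(\cdot,r)\to g(\cdot,r)$ in $L^1(Q)$ for each fixed $r$, combined with that equicontinuity in $r$ and the uniform $L^\infty$ bound, upgrades to $F_m(\cdot,z_m)\to F(\cdot,z)$ in $L^1(Q)$ whenever $z_m\to z$ in $L^1(Q)$ with $\sup_m\|z_m\|_\infty<\infty$. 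Theorem \ref{trcontinuity} then yields $u_m\to u$ in $\C([0,T),L^1(\Omega))$ with $u$ the mild solution of $u_t+\A u\ni F(\cdot,u)$, $u(0)=u_0$; the $L^\infty$ bound passes to the limit, so $F(\cdot,u)=g(\cdot,u)$ and $u$ is the mild solution of the $\A$-Cauchy problem with source $g(\cdot,u)$.

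It remains to extract the PDE and the constraint. From the energy estimate \eqref{lmuevol} with $f$ replaced by $g_m(\cdot,u_m)$, together with the uniform $L^\infty$-bound on $u_m$ and the uniform $L^2(Q)$-bound on $g_m(\cdot,u_m)$, the sequence $p_m:=u_m^m$ is bounded in $L^2(0,T;H^1_0(\Omega))$; along a subsequence $p_m\rightharpoonup p$ there. Since $u_m\to u$ in $\C([0,T),L^1(\Omega))$, hence a.e. in $Q$ along a further subsequence, the monotonicity argument already used in Proposition \ref{tconvumWst} (cf. Proposition 2.5 of \cite{Br}) gives $u\in\sign(p)$ a.e. in $Q$. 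Passing to the limit in the weak formulation \eqref{evolwf} of \eqref{pmef} for $u_m$ — using the strong $L^1$-convergence of $u_m$, the weak $L^2(0,T;H^1_0)$-convergence of $p_m$, and $g_m(\cdot,u_m)\to g(\cdot,u)$ in $L^1(Q)$ from the previous step — yields \eqref{weakformhs} with $f=g(\cdot,u)$, so $(u,p)$ solves \eqref{evolhs0} with that source and $u(0)=u_0$. By uniqueness for the Hele-Shaw problem with linear drift (\cite{Igshuniq}, valid under \eqref{HypsupportV}$\,\Rightarrow\,$\eqref{HypV}) and the equivalence between mild and weak solutions, the limits $u$ and $p$ are independent of the chosen subsequences, so the convergences hold for the full sequence.

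The step I expect to be the main obstacle is the identification of the limit of the reaction term: showing that the mere pointwise-in-$r$ convergence \eqref{convgm}, combined with the uniform one-sided Lipschitz control $(\G_2)$ and the uniform $L^\infty$-bound on $u_m$, suffices both to make Theorem \ref{trcontinuity} applicable (this requires the perturbations $F_m$ to converge in the topology the abstract theorem demands, with uniform-in-$m$ dissipativity) and to close the passage to the limit in the weak formulation; the subsidiary point to settle carefully is that the barriers in $(\G_3)$ can indeed be chosen independently of $m$, so that the truncation level $M$ — and hence all the uniform estimates above — is $m$-independent.
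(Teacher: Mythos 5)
Your proposal is correct and follows essentially the same route as the paper: truncate the reaction to $F_m$, apply Theorem \ref{trcontinuity} together with the resolvent convergence of Corollary \ref{cconvam} to obtain $u_m\to u$ in $\C([0,T);L^1(\Omega))$, identify $g_m(\cdot,u_m)\to g(\cdot,u)$ in $L^1(Q)$, and conclude by the incompressible-limit result and the uniqueness from \cite{Igshuniq}. The only difference is cosmetic: where you re-derive the $L^2(0,T;H^1_0(\Omega))$ bound on $u_m^m$, the relation $u\in\sign(p)$ and the passage to the limit in the weak formulation, the paper simply invokes Theorem \ref{treglimum} with $f_m:=g_m(\cdot,u_m)$; your explicit attention to hypothesis b) of Theorem \ref{trcontinuity} and to the $m$-uniformity of the barriers in $(\G_3)$ only makes precise points the paper leaves implicit.
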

\begin{proof}
To begin with we prove compactness of $u_m$ in $\C([0,T);L^1(\Omega)).$ We know that $u_m$ is the mild solution of the sequence of Cauchy problems   
\begin{equation}
	\left\{\begin{array}{ll}
		u_t + \A_m u\ni  F_m(.,u) \quad & \hbox{ in }(0,T)\\  \\
		u(0)=u_{0m},
	\end{array}  \right.
\end{equation}
where, for a.e. $t\in (0,T)$, $F_m(t,z) = g_m(t,.,z(.))\vee (-M)) \wedge M )),$ a.e. in $\Omega,$  for any $z\in L^1(\Omega),$  and 
  $$ M:=\max( \Vert \omega_1\Vert_\infty,  \Vert \omega_2\Vert_\infty) . $$ 

Thanks to  \eqref{explicitcondg}, one sees that $F_m$ satisfies all   the assumptions  of Theorem \ref{trcontinuity}. 
This implies, by Theorem \ref{trcontinuity}, that 
\begin{equation}\label{convum2}
	u_m\to u,\quad \hbox{ in }\C([0,T);L^1(\Omega)), \hbox{ as }m\to\infty.
\end{equation}
Thus the compactness of $u_m.$ On the other hand, remember that $u_m$ is a weak solution of 
 \begin{equation}  
	\left\{  \begin{array}{ll} 
		\displaystyle \frac{\partial u }{\partial t}  -\Delta u^m +\nabla \cdot (u  \: V)=f_m  \quad  & \hbox{ in } Q \\  \\  
		\displaystyle u= 0  & \hbox{ on }\Sigma  \\  \\   
		\displaystyle  u (0)=u _{0m} &\hbox{ in }\Omega.
	\end{array} \right.
\end{equation}
with $f_m:=g(.,u_m).$   Using again \eqref{explicitcondg}, \eqref{convgm} and \eqref{convum2},   we see that  
 $$f_m\to g(.,u)\quad \hbox{ in }L^1(Q),\quad \hbox{ as }m\to\infty.$$
   So, by  Theorem \ref{treglimum}, we deduce that $u_m^m\to p$ in    $L^2(0,T;H^1_0(\Omega))$-weak and 
 $(u,p)$ is the solution of the Hele-Shaw problem  \eqref{hlsg}.    At last the uniqueness follows from the $L^1-$comparison  results of the solutions of the Hele-Shaw problem \eqref{evolhs0} (cf. \cite{Igshuniq}) as well as the assumption  ($\G_2)$   (one can see also \cite{IgshuniqR} for more details on Reaction-Diffusion Hele-Shaw flow  with linear drift).  
 \end{proof}

\section{Appendix}
\subsection{Reminder on evolution problem governed by accretive operator}
\setcounter{equation}{0}

Our aim here is to remind the reader on some basic tools on $L^1-$nonlinear semi-group theory we use in this paper. We are interested   in PDE which can be   be written in the following form 
\begin{equation}\label{abstractevol}
	\left\{  \begin{array}{ll}
		\frac{du}{dt}  +B u \ni f \quad & \hbox{ in } (0,T)\\ \\ 
		u(0)=u_{0},
	\end{array}\right.
\end{equation}	  
where $B$ is a possibly multivalued operator defined on  $L^1(\Omega)$ by its graph 
$$B=\left\{ (x,y)\in L^1(\Omega)\times L^1(\Omega)\: :\: y\in Bx\right\},$$ $f\in L^1(0,T;L^1(\Omega))$ and $u_0\in L^1(\Omega).$ An operator  $B$ is said to be accretif  in $L^1(\Omega)$ if and only if the operator $J_\lambda := (I+\lambda\:  B)^{-1}$ defines a contraction in $L^1(\Omega),$ for any $\lambda >0$ ;  i.e.   if for $i=1,2,$  $(f_i-u_i)\in \lambda Bu_i,$ then $\Vert u_1-u_2\Vert_1 \leq \Vert f_1-f_2\Vert_1 .$  

To study the evolution problem \eqref{abstractevol} in the framework of nonlinear semi-group theory in the Banach space $L^1(\Omega),$  the main ingredient is to use the operator $J_\lambda,$ through the Euler-Implicit time discretization scheme.  For an arbitrary $n\in \NN^*$ such that $0<\eps:=T/n\leq \eps_0,$    we consider the sequence of $(u_i,p_i)_{i=0,...n}$ given by   :
\begin{equation} 
	u_i+\eps B u_i\ni \eps f_i +u_{i-1},\quad \hbox{ for }i=1,...n,
\end{equation}
where,   for each $i=0,...n-1,$ $f_i$ is given by
$$f_i = \frac{1}{\eps } \int_{i\eps}^{(i+1)\eps }  f(s)\: ds ,\quad \hbox{ a.e. in }\Omega.  $$
Then, for a given $\eps-$time discretization $t_i=i\eps,$ $i=0,...n,$     we define the $\eps-$approximate solution  
\begin{equation}
	u_\eps:=  \sum_{i=0} ^{n-1 }  u_i\chi_{[t_i,t_{i+1})},
\end{equation}
and  its linear interpolate given by 
\begin{equation}
	\tilde u_\eps(t) = \sum_{i=0} ^{n-1 }  \frac{(t-t_{i})u_{i+1} -  (t-t_{i+1})u_{i} }{t_{i+1} -t_{i} } \: \chi_{[t_i,t_{i+1})}(t) ,\quad \hbox{ for any }t\in [0,T).
\end{equation} 
In particular, one sees that $u_\eps,$ $\tilde u_\eps$ and $f_\eps$ satisfies the following $\eps-$approximate dynamic 
\begin{equation}
	\frac{d\tilde u_\eps}{dt} +Bu_\eps \ni f_\eps,\quad \hbox{ in }(0,T).
\end{equation}
The main goal afterwards is to let $\eps\to 0,$ to cover the ''natural'' solution of  the Cauchy problem \eqref{abstractevol}. The following theorem known as Crandall-Liggett theorem (at least in the case where $f\equiv 0,$ cf.  \cite{CrLi}) pictures the limit of $u_\eps$ and $\tilde u_\eps.$

\begin{theorem}\label{Crandall-Liggett}
	Let $B$ be an accretive operator in $L^1(\Omega)$ and $u_0\in \overline{D(B)}.$ 	If for each  $\eps>0,$ the  $\eps-$approximate solution $u_\eps$   is well defined, then  there exists a unique $u\in \C([0,T),L^1(\Omega))$ such that $u(0)=u_0,$  
	\begin{equation}
		u_\eps \to u\quad \hbox{ and } \quad \tilde{u}_\eps \to u \quad \hbox{ in }\C([0,T),L^1(\Omega)),  \hbox{ as }\eps\to 0.
	\end{equation}  
	The function $u$ is called the mild solution of the evolution problem \eqref{abstractevol}.  Moreover, if $u_1$ and $u_2$ are two mild solutions associated with $f_1$ and $f_2,$ then there exists $\kappa\in L^\infty(\Omega),$ such that $\kappa\in \sign(u_1-u_2)$   a.e.  in $Q,$ and   
	\begin{equation} 
		\frac{d}{dt} \Vert u_1-u_2\Vert_1 \leq \int_{[u_1=u_2]}  \vert  f_1-f_2\vert \:  dx +\int_{[u_1\neq u_2]} \kappa\: (f_1-f_2) \: dx ,\quad \hbox{ in }\D'(0,T). 
	\end{equation}  
\end{theorem}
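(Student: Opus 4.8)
The plan is to run the classical Crandall--Liggett generation argument, carrying the forcing term along and upgrading the basic contraction to the Kato-type inequality of the last assertion. Write $J_\lambda=(I+\lambda B)^{-1}$, which by accretivity is single-valued and $1$-Lipschitz on its domain, and use the resolvent identity $J_\eps z=J_\mu\!\big(\tfrac{\mu}{\eps}\,z+(1-\tfrac{\mu}{\eps})J_\eps z\big)$ for $0<\mu\le\eps$. Let $(u_i)$ be the $\eps$-scheme built from data $(f_i)$ and $(v_j)$ the $\mu$-scheme built from data $(g_j)$. Since $u_i=J_\eps(u_{i-1}+\eps f_i)$, the resolvent identity rewrites this as $u_i=J_\mu\!\big(\tfrac{\mu}{\eps}(u_{i-1}+\eps f_i)+(1-\tfrac{\mu}{\eps})u_i\big)$; comparing with $v_j=J_\mu(v_{j-1}+\mu g_j)$ through the contraction property produces the two-index recursion
$$a_{i,j}\le\tfrac{\mu}{\eps}\,a_{i-1,j-1}+\big(1-\tfrac{\mu}{\eps}\big)a_{i,j-1}+\mu\,\|f_i-g_j\|_1,\qquad a_{i,j}:=\|u_i-v_j\|_1 .$$
Iterating this recursion and estimating the resulting binomial weights by the standard Kobayashi computation yields, for $t_i=i\eps$, $s_j=j\mu$ and $u_0\in D(B)$,
$$\|u_i-v_j\|_1\le\|u_0-v_0\|_1+C\big(|t_i-s_j|+\sqrt{(\eps+\mu)\,t_i}\,\big)\,c_0+\int_0^{T}\|f_\eps(s)-g_\mu(s)\|_1\,ds+o(1),$$
with $c_0$ the $L^1$-norm of the minimal section of $Bu_0$ and $o(1)\to0$ as $\eps,\mu\to0$; for general $u_0\in\overline{D(B)}$ one approximates $u_0$ by elements of $D(B)$ and closes by the contraction property, which en route also gives $J_\eps u_0\to u_0$, so the initial layers of the two schemes are consistent.

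Taking the same forcing ($f_\eps=g_\mu$) and $s_j$ near $t_i$ in the estimate above shows that $\eps\mapsto u_\eps$ is Cauchy in $\C([0,T),L^1(\Omega))$; hence there is a unique limit $u\in\C([0,T),L^1(\Omega))$ with $u(0)=u_0$ and $u_\eps\to u$. From $\|\tilde u_\eps(t)-u_\eps(t)\|_1\le\sup_i\|u_{i+1}-u_i\|_1\to0$ (again a consequence of the estimate with $\mu=\eps$ together with a unit time shift) one gets $\tilde u_\eps\to u$ as well. Passing to the limit $\eps\to0$ in the analogous discrete contraction inequality gives $\|u_1-u_2\|_1\le\|u_{0,1}-u_{0,2}\|_1+\int_0^t\|f_1-f_2\|_1$, and in particular uniqueness of the mild solution.

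For the refined comparison one exploits that $B=\A_m$ is not merely accretive but satisfies the Kato-type inequality of Theorem \ref{tcompcmef}, whose time-independent counterpart underlies Corollary \ref{ccomp}: at the resolvent level, if $u+\eps Bu\ni F$ and $v+\eps Bv\ni G$ there is $\kappa\in\sign(u-v)$ with $\|u-v\|_1\le\|\bar u-\bar v\|_1+\eps\big(\int_{[u=v]}|F-G|+\int_{[u\neq v]}\kappa\,(F-G)\big)$ when $\bar u+\eps B\bar u\ni \bar u$-type data are compared. Applying this along the two schemes run with a common step $\eps$ gives a $\kappa_i\in\sign(u_i-v_i)$ with
$$\|u_i-v_i\|_1\le\|u_{i-1}-v_{i-1}\|_1+\eps\Big(\int_{[u_i=v_i]}|f_i-g_i|+\int_{[u_i\neq v_i]}\kappa_i\,(f_i-g_i)\Big).$$
Summing over $i$, testing against $0\le\psi\in\D(0,T)$, and letting $\eps\to0$ — using $u_\eps\to u_1$, $v_\eps\to u_2$ in $\C([0,T),L^1(\Omega))$ and the $L^\infty(Q)$-bound on $\kappa_\eps:=\sum_i\kappa_i\chi_{[t_i,t_{i+1})}$, from which a subsequence converges weak-$*$ to some $\kappa$ with $|\kappa|\le1$ and $\kappa\in\sign(u_1-u_2)$ a.e. on $[u_1\neq u_2]$ — yields the stated distributional inequality. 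I expect the main obstacle to be the first step: the iteration of the two-index recursion and the sharp control of its binomial tails that produce the $\sqrt{(\eps+\mu)\,t}$ term, together with the density reduction from $D(B)$ to $\overline{D(B)}$; in the last step the only delicate point is identifying the weak-$*$ limit $\kappa$ with an element of $\sign(u_1-u_2)$ on $[u_1\neq u_2]$, which is handled by a standard lower-semicontinuity argument using the strong convergence of $u_\eps$ and $v_\eps$.
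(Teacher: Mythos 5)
The paper does not prove this theorem: it is stated in the Appendix as a recap of classical $L^1$-semigroup theory and attributed to Crandall--Liggett \cite{CrLi} (for $f\equiv 0$) and to \cite{Benilan,Barbu,BeIgsing} for the inhomogeneous case, so there is no in-paper argument to compare with. Your proposal reconstructs the standard generation proof that lies behind those references, and the first part is sound as a sketch: the resolvent identity is used correctly (note that $J_\mu$ is automatically defined at the convex combination, since $u=J_\eps z$ means $u+\eps w=z$ for some $w\in Bu$, whence $\frac{\mu}{\eps}z+(1-\frac{\mu}{\eps})u=u+\mu w$), the two-index recursion for $a_{i,j}$ is the right one, and the Kobayashi bookkeeping plus the density reduction from $D(B)$ to $\overline{D(B)}$ is correctly identified as the technical heart, even if not carried out. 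A minor slip: your justification that $\sup_i\Vert u_{i+1}-u_i\Vert_1\to 0$ via the estimate with $\mu=\eps$ and a unit time shift needs $u_0\in D(B)$ and $f$ of bounded variation in time; for general $u_0\in\overline{D(B)}$ and $f\in L^1$ one instead deduces $\tilde u_\eps\to u$ directly from the uniform convergence of $u_\eps$ and the continuity of $u$, since $\tilde u_\eps(t)$ is a convex combination of $u_\eps(t_i)$ and $u_\eps(t_{i+1})$.

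The genuine gap is in the last assertion. The theorem is stated for an arbitrary accretive operator $B$ in $L^1(\Omega)$, but you derive the refined inequality by invoking Theorem \ref{tcompcmef} and Corollary \ref{ccomp}, i.e.\ properties of the particular operator $\A_m$; at best this proves the claim for $B=\A_m$, not as stated. Moreover, the resolvent-level inequality you quote, with the coincidence-set term $\int_{[u=v]}\vert F-G\vert\,dx$, is not what Theorem \ref{tcompcmef} or Corollary \ref{ccomp} provide (they give $\frac{d}{dt}\int(u_1-u_2)^+\leq\int\kappa\,(f_1-f_2)$ with $\kappa\in\signp(u_1-u_2)$, and the $L^1$-contraction), and it does not follow from accretivity alone: from $u_i-v_i+\eps(w_i-z_i)=u_{i-1}-v_{i-1}+\eps(f_i-g_i)$, accretivity only yields $\Vert u_i-v_i\Vert_1\leq\Vert u_{i-1}-v_{i-1}+\eps(f_i-g_i)\Vert_1$, whose elementary bound is $\Vert u_{i-1}-v_{i-1}\Vert_1+\eps\Vert f_i-g_i\Vert_1$, not the bracket form you need. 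The standard repair, valid for every accretive $B$ and hence matching the statement, is Bénilan's integral-solution machinery: one first shows that the mild solution is an integral solution, i.e.\ $\frac{d}{dt}\Vert u(t)-x\Vert_1\leq\big[u(t)-x,\,f(t)-y\big]$ for all $(x,y)\in B$, by passing to the limit in $\Vert u_i-x\Vert_1\leq\Vert u_{i-1}-x+\eps(f_i-y)\Vert_1$ using the convexity of $\lambda\mapsto\Vert a+\lambda b\Vert_1$; one then compares two mild solutions by doubling the time variable. In $L^1$ the bracket is $[w,h]=\int_{[w\neq 0]}\so(w)\,h\,dx+\int_{[w=0]}\vert h\vert\,dx$, which is exactly the claimed right-hand side with $\kappa=\so(u_1-u_2)$ on $[u_1\neq u_2]$; this is the argument of \cite{Benilan,BeIgsing}. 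Your weak-$*$ compactness step for $\kappa_\eps$ would be fine once a correct discrete inequality is in hand, but as written that inequality is the missing ingredient.
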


 \medskip 
 
 On sees that this theorem figures out  in a natural way a solution to the Cauchy problem \eqref{abstractevol}  to settle existence and uniqueness  questions for the associate PDE. However, in general we do not know in which sense the limit $u$ satisfies the concluding PDE ; this is connected to the regularity of $u$ as well as to the compactness of  $	\frac{d\tilde u_\eps}{dt}.$ We refer interested readers to \cite{Barbu} and \cite{Benilan} for more developments and examples in this direction. One can see also the book \cite{Br} in the case of Hilbert space, for which the concept  of  accretive operator   is appointed by   monotone graph notion.

 One sees that besides the accretivity (monotinicity in the case of Hilbert space) the well  posedness for the ''generic'' associate stationary problem  
 $$u+\lambda \: Bu\ni g,\quad \hbox{ for a given }g  $$
 is first need. Thereby, a sufficient condition for the results of Theorem \ref{Crandall-Liggett} is given by the so called range condition  
 \begin{equation}
	\overline{\R(I+\lambda B)} =L^1(\Omega),\quad \hbox{ for small  } \lambda>0 . 
\end{equation}
Indeed, in this case Euler-Implicit time discretization scheme  is well pose for any $i=0,...n-1,$ and the $\eps-$approximate solution is well defined (for small $\eps >0$).  Then the convergence to unique mild solution  $u$ follows by accretivity (monotinicity in the case of Hilbert space).

   In particular, Theorem \ref{Crandall-Liggett}    enables to associate to each accretif  operator $B$ satisfying   the   range condition    a nonlinear semi-group of contraction in $L^1(\Omega).$  It  is given by   Crandall-Ligget exponential formula  
\begin{equation}
	e^{-tB} u_0=  L^1-\lim \left(  I+\frac{t}{n} B\right)^{-n} u_0, \quad \hbox{ for any }u_0\in \overline{\D(B)}.
\end{equation}
In other words   the mild solution of \eqref{abstractevol} with $f\equiv 0$ is given by $e^{-tB}u_0.$

 \bigskip
The attendance   of a reaction in nonlinear PDE hints to study evolution problem of the type  
 	\begin{equation}\label{abstractevolF}
 	\left\{ 	\begin{array}{ll}
 		\displaystyle\frac{du}{dt}+Bu\ni  F(.,u)  \quad &  \hbox{ in }  (0,T)\\  \\
 		\displaystyle u(0)=u_0 ,
 	\end{array}\right.
 \end{equation} 
where $F\: :\: (0,T)\times L^1(\Omega)\to L^1(\Omega),$  is assumed to be Carathéodory, i.e. $ F(t,z)$  is measurable in $t\in (0,T)$ and  continuous in $z\in L^1(\Omega).$  To solve the evolution problem \eqref{abstractevolF} in the framework of $\eps-$approximate/mild solution, we say that $u\in \C([0,T);L^1(\Omega))$ is a mild solution of   \eqref{abstractevolF} if and only if 
$u$ is a mild solution of  \eqref{abstractevol} with $f(t)=F(t,u(t))$ for a.e. $t\in (0,T).$ Existence and uniqueness   are more or  less well known in the case where  $F(t,r)=f(t)+F_0(r)$, with $f(t)\in L^1(\Omega),$ for a.e. $t\in [0,T),$  and $F_0$   a Lipschitz continuous function in $\RR$.    The following theorems set up general assumptions on $F$ to ensure  existence and uniqueness of mild solution for \eqref{abstractevolF}, as well as continous dependence with respect to $u_0$ and $F.$ We refer the readers to \cite{BeIgsing} for  the detailed of proofs in abstract Banach spaces.

\bigskip
To call back these results, we  assume moreover that $F$ satisfies    the following assumptions : 
	\begin{itemize}
	
	\item[$(F_1)$]  There exists  $k\in L^1_{loc}  (0,T) $ such that
	\begin{equation} 
		\int  ( F(t,z)-F(t,\hat z))\: \so(z-\hat z)  \: dx    \leq  k(t)\: \Vert z-\hat z\Vert_1, \quad \hbox{ a.e. }t\in (0,T), 
	\end{equation}
	for every $z,\ \hat z\in \overline{D(B)} .$
	\item[$(F_2)$]  There exists  $c\in L^1_{loc}  (0,T) $ such that
	\begin{equation}
		\Vert F(t,z)\Vert_1 \leq c(t), \quad \hbox{ a.e. }t\in (0,T)
	\end{equation}
	for every $z\in \overline{D(B)} .$
\end{itemize}
In particular, one sees that under these  assumptions, $F(.,u)\in L^1_{loc}(0,T;L^1(\Omega)) $ for any $u\in$ $\C([0,T);L^1(\Omega)).$

\begin{theorem}\label{abstractRevol} (cf. \cite{BeIgsing}) 
	If $B$ be an accretive operator in $L^1 (\Omega)$ such that $J_\lambda$ well defined in a dense subset of $L^1(\Omega),$ then,  for any $u_0\in\overline {D(B)} $  there exists a unique mild solution $u$ of \eqref{abstractevolF} ; i.e. $u$ is the unique function in $\C([0,T);X)$, s.t.   $u$ is the    mild solution of
	\begin{equation}
		\left\{ 	\begin{array}{ll}
			\displaystyle\frac{du}{dt}+Bu\ni f \quad &  \hbox{ in }  (0,T)\\  \\
			\displaystyle u(0)=u_0,
		\end{array}\right.\end{equation}
	with $f(t)=F(t,u(t))$ a.e. $t\in (0,T).$
\end{theorem}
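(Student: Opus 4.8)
The plan is to recast \eqref{abstractevolF} as a fixed-point problem and to solve it by Banach's contraction principle, using Theorem~\ref{Crandall-Liggett} as the black box that handles the problem with a prescribed $L^1$-in-time forcing together with its comparison estimate. Fix $u_0\in\overline{D(B)}$. For $v\in\C([0,T);L^1(\Omega))$ with values in $\overline{D(B)}$, the map $t\mapsto F(t,v(t))$ is measurable (it is measurable in $t$, continuous in $z$, and $v$ is continuous) and, by $(F_2)$, is dominated by $c\in L^1_{loc}(0,T)$; hence it belongs to $L^1_{loc}(0,T;L^1(\Omega))$. The density of $R(I+\lambda B)$ together with $u_0\in\overline{D(B)}$ is exactly what makes the Euler scheme available, so Theorem~\ref{Crandall-Liggett} produces a unique mild solution, denoted $\Phi(v)$, of $u'+Bu\ni F(\cdot,v)$, $u(0)=u_0$; note that $\Phi(v)$ again takes values in $\overline{D(B)}$. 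By the very definition of a mild solution of \eqref{abstractevolF}, a function $u$ solves \eqref{abstractevolF} if and only if $\Phi(u)=u$, so existence and uniqueness reduce to a unique-fixed-point statement for $\Phi$ on the complete metric space $\C([0,T_0];\overline{D(B)})$ for a suitable $T_0<T$.

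To produce the contraction I would take $v_1,v_2$, set $u_i=\Phi(v_i)$ and $f_i=F(\cdot,v_i)$, and apply the comparison inequality of Theorem~\ref{Crandall-Liggett},
\begin{equation}
	\frac{d}{dt}\Vert u_1-u_2\Vert_1\le\int_{[u_1=u_2]}\vert f_1-f_2\vert\,dx+\int_{[u_1\neq u_2]}\kappa\,(f_1-f_2)\,dx,\qquad \kappa\in\sign(u_1-u_2),
\end{equation}
then invoke $(F_1)$ — which bounds precisely $\int(F(t,z)-F(t,\hat z))\,\so(z-\hat z)\,dx$ by $k(t)\Vert z-\hat z\Vert_1$ — and Gronwall's lemma on an interval $[0,T_0]$ chosen so that $\int_0^{T_0}k$ is small, obtaining $\Vert u_1-u_2\Vert_{\C([0,T_0];L^1)}\le\tfrac12\Vert v_1-v_2\Vert_{\C([0,T_0];L^1)}$. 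Banach's fixed-point theorem then gives a unique local mild solution. Since $k,c\in L^1_{loc}(0,T)$ do not blow up before $T$, one restarts the construction from $u(T_0)\in\overline{D(B)}$ and iterates, reaching every compact subinterval of $[0,T)$; local uniqueness at each step propagates to uniqueness on $[0,T)$, which also delivers the uniqueness half of the statement, every mild solution of \eqref{abstractevolF} being a fixed point of the relevant restriction of $\Phi$.

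The delicate point — and the step I expect to be the main obstacle — is the contraction estimate: $(F_1)$ is only a one-sided, sign-compatible Lipschitz bound, not a Lipschitz bound on $F(t,\cdot)$, so one cannot simply control $\Vert F(t,v_1)-F(t,v_2)\Vert_1$ and feed it into the crude $L^1$-contraction. The argument must genuinely exploit the sharp form of the comparison inequality (the factor $\kappa\in\sign(u_1-u_2)$ and the splitting of $\Omega$ into $[u_1=u_2]$ and $[u_1\neq u_2]$), and requires a careful treatment of the contribution on $[u_1=u_2]$, where the structure of $F$ must be used. The remaining ingredients — measurability of $t\mapsto F(t,v(t))$, well-posedness of $\Phi$ via Theorem~\ref{Crandall-Liggett}, and the continuation argument — are routine; the detailed bookkeeping is carried out in \cite{BeIgsing}.
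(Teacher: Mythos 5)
First, a remark on the comparison you were asked to make: the paper does not actually prove Theorem \ref{abstractRevol}; it is quoted from \cite{BeIgsing}, so the only yardstick is the approach of that reference (working directly on the coupled problem, viewing $B-F(t,\cdot)$ as a perturbed, quasi-accretive family), not an internal argument of this paper.

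Your proposal has a genuine gap at its central step, which you acknowledge but do not repair, and the intended repair cannot work. In your scheme $u_i=\Phi(v_i)$ is the mild solution with forcing $f_i=F(\cdot,v_i)$, and the comparison inequality of Theorem \ref{Crandall-Liggett} controls $\frac{d}{dt}\Vert u_1-u_2\Vert_1$ through a pairing of $f_1-f_2$ with $\kappa\in\sign(u_1-u_2)$, i.e.\ with the sign of the difference of the \emph{outputs}; hypothesis $(F_1)$ only controls the pairing of $F(t,v_1)-F(t,v_2)$ with $\so(v_1-v_2)$, the sign of the difference of the \emph{inputs}. Since $u_1-u_2$ and $v_1-v_2$ have unrelated signs, the only way to feed the comparison inequality into your decoupled map $\Phi$ is through the crude bound $\Vert F(t,v_1)-F(t,v_2)\Vert_1$, which is precisely what $(F_1)$--$(F_2)$ do not give ($F(t,\cdot)$ is merely continuous with a one-sided estimate, not Lipschitz). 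So the obstruction is not a delicate treatment of the set $[u_1=u_2]$, as you suggest; it is that the sign structure of $(F_1)$ is invisible to the Banach fixed-point map, and the contraction estimate cannot be closed under the stated hypotheses, while deferring ``the detailed bookkeeping'' to \cite{BeIgsing} is circular, since that reference is the proof being asked for. The sign structure does work when both functions solve the \emph{full} problem: taking $f_i=F(\cdot,u_i)$ in the comparison inequality, $\kappa$ matches $\so(u_1-u_2)$ on $[u_1\neq u_2]$, so $(F_1)$ and Gronwall yield uniqueness and continuous dependence (the contribution on $[u_1=u_2]$ being the reason the bracket-type condition is formulated as it is, and vanishing for Nemytskii-type $F$ as in Section \ref{Sreaction}); existence must then be obtained by working on the coupled problem itself, e.g.\ an implicit time discretization of $u'+Bu\ni F(t,u)$ treating $B-F(t,\cdot)+k(t)I$ as an accretive family, rather than by a sup-norm contraction.
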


\bigskip\bigskip 
Another important results concerns the continuous dependence of the solution with respect to the operator $B$ as well to the data $f_n$ and $u_{0n}$ is given in the following theorem. The proof may be found in \cite{BeIgsing}.

\begin{theorem}\label{trcontinuity} (cf. \cite{BeIgsing})  
	For $m=1,2, ...  ,$ let $B_m$ be an accretive  operators in $L^1(\Omega)$ satisfying the range condition and    $F_m\ :\  (0,T)\times  \overline \D(B_m ) \to L^1 (\Omega)$ a  Carathéodory applications satisfying  $(F_1)$ and $(F_2)$  with $k$ and $c$ independent of $m.$  For each $m=1,2,...  $ we consider    $u_{0m} \in \overline \D( B_m)$ and $u_m$ the mild solution of the evolution problem 
	\begin{equation}
		\left\{ 	\begin{array}{ll}
			\displaystyle\frac{du}{dt}+B_mu\ni f_m \quad &  \hbox{ in }  (0,T)\\  \\
			\displaystyle u(0)=u_{0m},
		\end{array}\right.\end{equation}
	with $f_m=F_m(.,u)$.
	If,  there exists an accretive  operators $B$  in $L^1(\Omega)$  and a Carathéodory $F\ :\  (0,T)\times  \overline {D(B)} \to L^1 (\Omega)$  such that \begin{itemize}
		\item[a)] 	$(I+\lambda B_m)^{-1} \to (I+\lambda B)^{-1}\quad \hbox{ in }L^1(\Omega),  \hbox{ for any }0<\lambda <\lambda_0$
		\item[b)] $F_m(t,z_m) \to F(t,z)$ in $L^1(\Omega),$ for a.e. $t\in (0,T),$  and for any $z_m\in  \overline{D(B_m)}$ such that $\lim_{m\to \infty } z_m=z\in\overline {D(B)}. $ 
		\item[c)] there exists $u_0\in \overline {D(B)},$ such that $ u_{0m}\to  u_{0},$    
	\end{itemize} 
	then 
	\begin{equation} \label{convumabstract}
		u_m\to u, \quad \hbox{ in } \C([0,T),L^1(\Omega)),
	\end{equation}
	and $u $ is the unique mild solution of 
	\begin{equation}
		\left\{  \begin{array}{ll}
			\frac{du}{dt}  +B  u \ni F(.,u)\quad & \hbox{ in } (0,T)\\ \\ 
			u(0)=u_{0}.
		\end{array}\right.
	\end{equation}		 
\end{theorem}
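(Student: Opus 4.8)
The plan is to decouple the convergence of the contraction semigroups generated by the $B_m$ from the convergence of the reaction terms, and then to close the loop by a Gronwall estimate based on $(F_1)$. First I would gather the uniform information: by $(F_2)$ the sources $f_m=F_m(\cdot,u_m)$ are dominated by $c\in L^1_{loc}(0,T)$ independently of $m$, hence bounded in $L^1(0,T;L^1(\Omega))$, and the contraction estimate of Theorem~\ref{Crandall-Liggett} then makes $(u_m)$ uniformly bounded and time-equicontinuous in $\C([0,T);L^1(\Omega))$. Passing to the limit in $(F_1)$ and $(F_2)$ along constant sequences $z_m\equiv z$ (using b)) shows that $F$ itself satisfies $(F_1)$ and $(F_2)$, so Theorem~\ref{abstractRevol} provides a unique mild solution $v$ of the limit problem with $v(0)=u_0$; the goal is $u_m\to v$.

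The technical core is a source--convergence lemma: if $g_m\to g$ in $L^1(0,T;L^1(\Omega))$ and $a_m\to a\in\overline{D(B)}$, then the mild solutions $S_m(g_m,a_m)$ of $\frac{du}{dt}+B_mu\ni g_m$, $u(0)=a_m$, converge in $\C([0,T);L^1(\Omega))$ to the mild solution $S(g,a)$ of the limit operator. I would prove this through the Euler--implicit scheme. For a fixed step $\eps$ the discrete iterates are compositions of the resolvents $J^m_\eps=(I+\eps B_m)^{-1}$; since each $J^m_\eps$ is a contraction converging strongly to $J_\eps=(I+\eps B)^{-1}$ by a), an induction over the finitely many time steps gives $u^m_\eps\to u_\eps$ for each fixed $\eps$. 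The Crandall--Liggett convergence $u^m_\eps\to S_m(g_m,a_m)$ as $\eps\to0$ is uniform in $m$ because the controlling estimate depends only on the uniformly bounded data; after approximating $a$ by elements of the domains to treat data merely in $\overline{D(B)}$, the diagonal bound $\Vert S_m(g_m,a_m)-S(g,a)\Vert_1\le\Vert S_m(g_m,a_m)-u^m_\eps\Vert_1+\Vert u^m_\eps-u_\eps\Vert_1+\Vert u_\eps-S(g,a)\Vert_1$ concludes.

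I would then remove the coupling by freezing the source. Let $\tilde v_m:=S_m(F(\cdot,v),u_{0m})$ solve $B_m$ driven by the fixed limit source $F(\cdot,v)$; the lemma (with $g_m=g=F(\cdot,v)$, $a_m=u_{0m}\to u_0$) gives $\tilde v_m\to S(F(\cdot,v),u_0)=v$. Since $u_m$ and $\tilde v_m$ solve the same operator $B_m$ with sources $F_m(\cdot,u_m)$ and $F(\cdot,v)$, the comparison inequality of Theorem~\ref{Crandall-Liggett}, after writing $F_m(\cdot,u_m)-F(\cdot,v)=\bigl(F_m(\cdot,u_m)-F_m(\cdot,\tilde v_m)\bigr)+\bigl(F_m(\cdot,\tilde v_m)-F(\cdot,v)\bigr)$ and applying $(F_1)$ to the first bracket, yields
\begin{equation}
\frac{d}{dt}\Vert u_m-\tilde v_m\Vert_1\le k(t)\,\Vert u_m-\tilde v_m\Vert_1+\Vert F_m(\cdot,\tilde v_m)-F(\cdot,v)\Vert_1\quad\hbox{in }\D'(0,T).
\end{equation}
By b) and $\tilde v_m\to v$ one has $F_m(t,\tilde v_m(t))\to F(t,v(t))$ a.e., dominated via $(F_2)$, so the error term tends to $0$ in $L^1(0,T)$; since $u_m$ and $\tilde v_m$ share the datum $u_{0m}$, Gronwall's lemma forces $u_m-\tilde v_m\to0$, hence $u_m\to v$, and uniqueness from Theorem~\ref{abstractRevol} identifies the limit.

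I expect the decisive difficulty to lie in the source--convergence lemma, precisely in making the Crandall--Liggett discretization estimate uniform in $m$ when the initial data belong only to $\overline{D(B_m)}$ and not to the domains; this is where the density approximation together with the uniform equicontinuity of the $\eps$-approximate solutions must be carried out carefully.
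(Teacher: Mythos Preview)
The paper does not actually prove this theorem: it is stated in the Appendix with the explicit sentence ``The proof may be found in \cite{BeIgsing}'' and carries the tag ``(cf.\ \cite{BeIgsing})''. So there is no proof in the paper to compare your proposal against.

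That said, your plan is the standard and correct route to this result, and it matches the spirit of the Bénilan--Igbida argument the paper is citing: a Brezis--Pazy type step (resolvent convergence $\Rightarrow$ convergence of mild solutions for fixed data, obtained via uniform-in-$m$ Crandall--Liggett estimates on the Euler scheme), followed by a freezing argument with Gronwall to absorb the nonlinear reaction. Two points deserve a word of care. First, $(F_1)$ is only a \emph{one-sided} Lipschitz condition, so in your Gronwall step you must use the precise comparison inequality of Theorem~\ref{Crandall-Liggett} with the $\sign_0(u_m-\tilde v_m)$ weight (and note that on $[u_m=\tilde v_m]$ one has $F_m(\cdot,u_m)=F_m(\cdot,\tilde v_m)$), rather than the crude bound $\Vert f_1-f_2\Vert_1$; your write-up hints at this but it is where errors typically creep in. Second, your identified difficulty --- making the discretization error uniform in $m$ when $u_{0m}\in\overline{D(B_m)}$ only --- is real; it is handled by approximating $u_{0m}$ by $J^m_\lambda u_{0m}\in D(B_m)$ and using that $J^m_\lambda u_{0m}\to J_\lambda u_0$ together with the contraction property, exactly as you outline.
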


 \vspace*{10mm}

\end{document}